\documentclass[11pt,a4paper]{amsart}
\usepackage{amsmath,amsfonts,amsthm,amssymb}
\usepackage[T1]{fontenc}
\usepackage{color}
\usepackage{epsfig}
\usepackage{graphicx}
\usepackage{cite,url}
\usepackage{hyperref}
\usepackage{xspace}
\usepackage{enumitem}
\usepackage[left=1in,right=1in,top=1in,bottom=1in,foot=0.5in]{geometry}
\usepackage{thm-restate}
\usepackage{soul}

\usepackage{amsthm}
\newtheorem{theorem}{Theorem}
\newtheorem{proposition}{Proposition}
\newtheorem{lemma}{Lemma}

\theoremstyle{definition}
\newtheorem{definition}{Definition}

\newtheorem{example}{Example}

\numberwithin{equation}{section}

\DeclareMathOperator{\supp}{supp}
\DeclareMathOperator{\sign}{sign}

\DeclareMathOperator{\Baryc}{Baryc}
\DeclareMathOperator{\Cocirc}{Cocirc}
\DeclareMathOperator{\Circ}{Circ}

\DeclareMathOperator{\Sign}{Sign}
\DeclareMathOperator{\Min}{Min}

\newcommand{\covectors}{\ensuremath{L}}
\newcommand{\X}{\ensuremath{X}}
\newcommand{\HH}{\ensuremath{H}}

\newcommand{\MM}{\ensuremath{M}}

\newcommand{\RR}{\ensuremath{R}}
\newcommand{\FF}{\ensuremath{F}}
\newcommand{\UU}{\ensuremath{U}}
\newcommand{\OO}{\ensuremath{{O}}}
\newcommand{\hcovectors}{J}

\begin{document}

\thispagestyle{empty}

\title{Geometry of ample/lopsided sets}

\author{Hans--J\"urgen Bandelt}
\address{Fachbereich Mathematik, Universit\"at   Hamburg, Bundesstr. 55, D-20146 Hamburg, Germany}
\email{bandelt@math.uni-hamburg.de}

\author{Victor Chepoi}
\address{Laboratoire d'Informatique et des Syst\`emes, Universit\'e d'Aix-Marseille, Facult\'e des Sciences de Luminy, F-13288 Marseille Cedex 9, France}
\email{victor.chepoi@lis-lab.fr}

\author{Andreas Dress}

\author{Jack Koolen}
\address{University of Science and Technology of China, China}
\email{koolen@ustc.edu.cn}

\begin{abstract} Lopsided sets were introduced by Jim Lawrence in 1983 when he studied the
subsets $\covectors$ of   $\{-1,+1\}^E$ that encode the intersection
pattern of a convex set $K$  with the orthants of  ${\mathbb R}^E$.
He identified a strong combinatorial condition  on
$\covectors$ (that he dubbed \emph{lopsidedness}) that is necessary (yet, as he showed, not sufficient) for the existence of such a convex set $K$, gave a number of equivalent conditions, and noted its close connections with various other topics studied in combinatorics. Lopsided sets have been independently rediscovered by several other authors, in particular by Andreas Dress in 1995, who called them \emph{ample} sets. Dress defined  ample sets as the set families satisfying equality in a combinatorial inequality, which holds for all set families.

Ample/lopsided sets can be regarded as the isometric subgraphs of  hypercubes for which the intersections with all subhypercubes yield isometric subgraphs.  Alternatively, they correspond to set families $\covectors$ such that when a subset $A$ of the ground set $E$ is shattered by $\covectors$, then $A$ is strongly shattered by $\covectors$, i.e., some subhypercube defined by $A$ is fully included in $\covectors$.

 In a previous article we characterized ample sets in various combinatorial and graph-theoretical ways. In this paper we study geometric realizations of ample sets as cubihedra (cube complexes), which yields several new characterizations. 
One such characterization establishes that the cubihedra of ample sets endowed with the intrinsic $\ell_1$-metric are exactly the isometric subspaces of $\ell_1$-spaces (which we call, weakly convex sets). We also view the barycenter maps of faces of cubihedra of ample sets as collections of $\{ \pm 1, 0\}$-sign vectors and, in analogy with the characterization  of oriented matroids by the covectors and the cocircuits. Moreover, we characterize the  collections of $\{ \pm 1, 0\}$-sign vectors corresponding to barycenter maps of all faces and all maximal faces of an ample set.  Furthermore, we show that any ample set $\covectors\subseteq \{ -1,+1\}^E$  is realizable as the intersection pattern of a weakly convex set $K$ with the orthants of ${\mathbb R}^E$.
All this testifies that the concept of ample sets is quite natural in the context of cube complexes.
\end{abstract}

\maketitle

\section{Introduction}

This paper is the follow-up of \cite{BaChDrKo}, in which we presented  a list of  combinatorial, recursive,  and graph-theoretical  characterizations of
\emph{lopsided sets} first introduced and investigated by Lawrence \cite{La} and rediscovered independently by Bollob\`as and Ratcliffe \cite{BoRa}, Dress \cite{Dr}, and Wiedemann \cite{Wi}, who dubbed them \emph{extremal}, \emph{ample}, and \emph{simple sets}, respectively.
In the present paper, we call such sets \emph{ample} (adopting the name from \cite{Dr}, which we find the most appropriate since it perfectly reflects their combinatorial quintessence) and we provide  several geometric
characterizations of ample sets, each emphasizing one or
another feature of ampleness and its relationships with some fundamental
concepts from the geometry of
$\ell_1$-spaces. We will repeatedly refer to the first part \cite{BaChDrKo}
for definitions, properties and characterizations of ample  sets.

\subsection*{Ampleness} Throughout this paper, $E$  denotes a finite set with
$n:=\#E$ elements,   $\{ \pm 1\}^E$ is  the set  of all maps from  $E$
into $\{ \pm 1\}=\{ -1,+1\}$, $\{ \pm 1, 0\}^E$ is  the set  of all maps from  $E$
into  $\{ \pm 1, 0\}=\{ -1,0,+1\}$. Thus  ${\mathbb R}^E$ is the vector space consisting of all maps from $E$ into $\mathbb R$. The \emph{solid hypercube}
$\HH(E)=[-1,+1]^E\subset {\mathbb R}^E$ consists of all maps from $E$ into
the interval $[-1,+1]$. We consider $\HH(E)$ as a cube complex, i.e., as a convex polyhedron together with its face structure. 
The 0-skeleton of $\HH(E)$ is $\{ \pm 1\}^E$, its 1-skeleton ${\HH}^{(1)}(E)$ is the \emph{graphic hypercube}, and $\{ \pm 1, 0\}^E$ is the set of sign maps of the barycenters of faces of $\HH(E)$. For any two maps $r',r''\in {\mathbb R}^E$, we denote by
$d(r',r'')$ their $\ell_1$-distance $||r',r''||_1=\sum_{e\in E} |r'(e)-r''(e)|$. Two vertices $s',s''\in \{ \pm 1\}^E$ define an edge (of length 2) of  ${\HH}^{(1)}(E)$
if and only if $d(s',s'')=2.$

In most  of the paper, we will consider subsets ${\covectors}$ of $\{ \pm 1\}^E$. Alternatively, such a set $\covectors$ can be viewed as a collection of subsets of $E$.  The set-theoretic complement of $\covectors$ is written as
${\covectors}^*:$
$${\covectors}^*:=\{ \pm 1\}^E-{\covectors}.$$
We denote by $G({\covectors})$ 
the subgraph of the graphic hypercube ${\HH}^{(1)}(E)$ induced by  $\covectors$. The set $\covectors$ is
called {\it connected}
if $G({\covectors})$ is connected, and it is called {\it isometric} if every
pair of vertices $s',s''$ of $\covectors$ can be connected in $G({\covectors})$
by a path of length $d(s',s'').$

Given any subset $A$ of $E,$ one can always associate two subsets
${\covectors}_A$ and ${\covectors}^A$ of $\{ \pm 1\}^{E-A}$ with an arbitrary set
${\covectors}\subseteq \{ \pm 1\}^{E}$ of sign maps:
\begin{align*}
{\covectors}_A:= \{ t\in \{ \pm 1\}^{E-A}: \mbox{ some extension } s\in \{ \pm 1\}^E \mbox{ of } t
\mbox{ belongs to } {\covectors} \},\\
{\covectors}^A:= \{ t\in \{ \pm 1\}^{E-A}: \mbox{ every extension } s\in \{ \pm 1\}^E \mbox{ of } t
\mbox{ belongs to } {\covectors} \}.
\end{align*}
${\covectors}_A=\{ s|_{E-A}: ~s\in {\covectors}\}$ encodes the projection of $\covectors$ onto $\{\pm 1\}^{({E-A})}.$
In contrast to ${\covectors}_A$, the smaller set ${\covectors}^A$ requires the existence of a full fiber
isomorphic to $\{\pm 1\}^{A}$  within $\covectors$ rather than just one point from $\covectors$.
The two operators ${\covectors}_A$ and ${\covectors}^A$ suggest two ways
to derive an abstract simplicial complex from $\covectors$:
\begin{align*}
\overline{\X}({\covectors}):=\{ A\subseteq E: ~{\covectors}_{E-A}=\{ \pm 1\}^A\},\\
\underline{{\X}}({\covectors}):=\{ A\subseteq E: ~{\covectors}^A\ne \varnothing\}.~~~~~~~~~~~
\end{align*}
Simplices of $\overline{{\X}}({\covectors})$ are  usually called  sets \emph{shattered} by $\covectors$  and  simplices of $\underline{{\X}}({\covectors})$ are called sets \emph{strongly shattered} by $\covectors$ \cite{ChChMoWa,MoWa}. The largest size of a simplex of $\overline{{\X}}({\covectors})$ is called the
\emph{Vapnik-Chervonenkis dimension} (\emph{VC-dimension} for short) of $\covectors$ (viewed as a set family). 

\begin{example}\label{example1}
Let $E=\{ 1,2,3\}$ and consider the subset $\covectors$ of $\{ \pm 1\}^{E}$ that consists
of all sign maps except the two constant ones. Then $\covectors$ encodes an
isometric 6-cycle in $\HH^{(1)}(E).$ For every singleton $A$ the projection of $\covectors$
onto $\{ \pm 1\}^{E-A}$ is surjective, but $\covectors$ does not include a full fiber isomorphic to this 2-cube.
Therefore $\overline{{\X}}({\covectors})$ comprises all proper subsets of $\{ 1,2,3\},$ whereas
$\underline{{\X}}({\covectors})$ consists
of the empty set and the three singletons.
\end{example}

In \cite{Dr} and \cite{BaChDrKo}  it is shown that
$$\#\underline{{\X}}({\covectors})\le \#{\covectors}\le \#\overline{{\X}}({\covectors})$$
holds for any $\covectors\subseteq \{ \pm 1\}^E$ (the inequality $\#{\covectors}\le \#\overline{{\X}}({\covectors})$ was proved before by Pajor \cite{Pa}). 
A set $\covectors$ is called {\it ample} if the equality
$\#{\covectors}=\#\overline{{\X}}({\covectors})$ holds.  Ampleness
turned out to be preserved when passing to the complementary set ${\covectors}^*$ and to
the sets ${\covectors}^A, {\covectors}_A$, and to imply connectedness (and, even more,
isometricity) of $L$. 
It followed that ${\covectors}_A$ and ${\covectors}^A$ had to be connected (isometric) subsets of $\{\pm 1\}^{E-A}$
for every ample subset ${\covectors}$ of $\{ \pm 1\}^E$. Conversely,
connectivity (or isometricity) of ${\covectors}^A$ for all $A\subseteq
E$ turned out to imply ampleness, suggesting to call such subsets
{\it superconnected} or {\it superisometric}. Further
investigation finally resulted in recognizing that our ample sets
coincided exactly with Lawrence's lopsided sets and that an
amazingly rich and multi--facetted theory regarding such subsets
of $\{ \pm 1\}^E$ could be developed. Here is a list of the most  remarkable
properties of ample sets established in \cite{BaChDrKo}, each of which could
be used to define them:

\medskip\noindent
{\it {\rm(1)} Superisometry:} ${\covectors}^A$ is isometric for all $A\subseteq
E,$

\medskip\noindent
{\it {\rm(2)} Superconnectivity:} ${\covectors}^A$ is connected for all
$A\subseteq E,$

\medskip\noindent
{\it {\rm(3)} Isometric recursivity:} ${\covectors}$ is isometric, and both
${\covectors}_e$ and ${\covectors}^e$ are ample for some $e\in E$,

\medskip\noindent
{\it {\rm(4)} Connected recursivity:} ${\covectors}$ is connected,
and ${\covectors}^e$ is ample for
every $e\in E,$

\medskip\noindent
{\it {\rm(5)} Commutativity:}   $({\covectors}^A)_B=({\covectors}_B)^A$ holds for
any disjoint  $A,B\subseteq E,$

\medskip\noindent
{\it {\rm(6)} Ampleness:}  $\#{\covectors}=\#\overline{\X}({\covectors}),$

\medskip\noindent
{\it {\rm(7)} Sparseness}: $\# {\covectors}=\# \underline{{\X}}({\covectors}),$

\medskip\noindent
{\it {\rm(8)} Lopsidedness}: for all $A,B\subseteq E$ with $A\cap B=\varnothing$ and $A\cup B=E$, either $A\in  \underline{{\X}}({\covectors})$ or $B\in  \underline{{\X}}({\covectors}^*)$.

\medskip\noindent
{\it {\rm(9)} Hereditary Euler characteristic 1:} for every face $F$ of $\HH(E)$ intersecting $\covectors$,
$$\sum_{i\ge 0} (-1)^if_i({\covectors}\cap F)=1,$$
where $f_i({\covectors}\cap F)$ counts the number of graphic $i$-cubes contained in the
intersection of $\covectors$ with $F$.

\medskip
The second-last characterization was the original definition of lopsidedness by Lawrence \cite{La}. The last characterization was first proved by Wiedemann  \cite{Wi}. We also recall the following nice characterization of ample sets due to Lawrence \cite{La}:

\medskip\noindent
{\it {\rm(10)} Total asymmetry:} If the intersection of ${\covectors}$ with a face $F$ of $\HH(E)$ is closed by taking antipodes, then either $F\cap \covectors$ is empty or coincide with $F\cap \{ \pm 1\}^E$.

\medskip
Lawrence \cite{La} presented several examples of ample sets. In particular, he proved that the sign vectors of the closed orthants of $\mathbb R^E$ intersecting a convex set $K$ is ample (he also presented an example of an ample set not realizable in this way). Additionally, in \cite{BaChDrKo}, we proved that vertex-sets of median graphs (CAT(0) cube complexes) are ample, and, more generally, that convex geometries and their bouquets  (which we called conditional antimatroids) are ample.

\subsection*{Our results}
Every subset ${\covectors}$ of $\{ \pm 1\}^X$ gives rise to a (not necessarily connected)
{\it cube complex} comprising all faces of the hypercube ${\HH}(E)$ all of whose vertices
belong to ${\covectors};$ cf. \cite{VdV}. This (compact) cubical polyhedron
(a {\it cubihedron} for short) will be denoted by $|{\covectors}|$ and
called the  {\it geometric realization} of $\covectors$. The vertices of $|{\covectors}|$
are exactly the elements in ${\covectors}$.  
The 1-skeleton  of this cubihedron is the graph $G({\covectors})$ defined above. 
If
$\covectors$ is connected, then $|{\covectors}\vert$ is connected
as well and therefore can be endowed with an intrinsic
$\ell_1$-metric $d_{|{\covectors}|}$. The resulting metric space
$(\vert {\covectors}\vert,d_{|{\covectors}|})$ is complete and
geodesic but is not
necessarily \emph{path-$\ell_1$-isometric}  (in the sense that any $r',r''\in \vert {\covectors}\vert$
can be connected in $\vert {\covectors}\vert$ by an $\ell_1$-geodesic).
For example, if $\covectors$ is defined as in Example \ref{example1},
then $|{\covectors}|$ is a solid 6-cycle of ${\mathbb
R}^3.$ The $\ell_1$-distance between the midpoints of two opposite
sides of this cycle is 4, while the intrinsic $\ell_1$-distance is 6.

In this paper, we will establish
that  path-$\ell_1$-isometricity  of 
the associated cube complex $\vert {\covectors}\vert$
is yet another characteristic feature of ampleness,
thus demonstrating that ample sets constitute a fundamental
domain for $\ell_1$-geometry:

\medskip\noindent
{\it {\rm(11)} Path-$\ell_1$-isometricity:}  $(\vert {\covectors}\vert,d_{|{\covectors}|})$ is  a
path-$\ell_1$-isometric subspace of the $\ell_1$-space $({\mathbb
R}^E,d)$.

\medskip
Since  the metric space $(\vert {\covectors}\vert,d_{|{\covectors}|})$  is complete, by Menger's theorem  \cite{Me},
the path-$\ell_1$-isometricity of $|\covectors|$   is equivalent to its Menger convexity, which in our particular $\ell_1$-case will be dubbed  weak convexity: $K\subseteq {\mathbb R}^E$ is called \emph{weakly convex} if  for any $r',r''\in \vert {\covectors}\vert$ there exists
$r\in \vert {\covectors}\vert$ different from $r'$ and $r''$ such that $d(r',r'')=d(r',r)+d(r,r'')$. Convex sets of ${\mathbb R}^E$ are weakly convex, explaining the chosen name.

Weak convexity is not the weakest requirement to $|\covectors|$, which still characterizes ampleness of $\covectors$. We call a subset $K$ of ${\mathbb R}^E$ \emph{sign-convex} if for any  $r',r''\in K$ and every $e\in E$ with $r'(e)r''(e)<0$ there exists some  $r\in K$ with $r(e)=0$ and $\sign(r(f))\in \{ \sign(r'(f)),\sign(r''(f))\}$ for all $f\in E$ with $r'(f)r''(f)\ge 0$.  

\medskip\noindent
{\it {\rm(12)} Weak  and sign convexity:}  $\vert {\covectors}\vert$ is
weakly convex (equivalently, $\vert {\covectors}\vert$ is sign-convex).

\medskip
While weakly convex subsets of  ${\mathbb R}^E$ are obviously connected, sign-convexity is more admissive  because finite subsets of  ${\mathbb R}^E$  may be sign-convex. In fact we show that sign-convexity characterizes the set $\Baryc(\covectors)$ of  the barycenter maps of faces of $|\covectors|$ of ample sets $\covectors$. 

Lawrence \cite{La} showed how to derive ample sets $\covectors(K)$ from convex subsets $K$ of ${\mathbb R}^E$. There is also a
canonical construction which allows to derive ample sets $\covectors(\hcovectors)$ from certain subsets $\hcovectors$ of $\{ \pm 1, 0\}^E$. Consider the standard ordering $\prec$ of signs $-1,+1,0$ for which $-1$ and $+1$
are incomparable, $0\prec -1$ and $0\prec +1$ and endow $\{ \pm 1,0\}^E$ with the product ordering, denoted also by $\prec$. The {\it upward  closure} $\uparrow\hspace*{-0.1cm}\hcovectors$ of a subset $\hcovectors\subseteq \{ \pm 1,0\}^E$ consists of all $t'\in \{ \pm 1,0\}^E$ such that $t\prec t'$ for some $t\in \hcovectors$.  
If $\covectors$ is ample, then $\covectors$ can be retrieved from the set $\Baryc(\covectors)$ as $\covectors=\Baryc(\covectors)\cap \{ \pm 1\}^E=\uparrow\hspace*{-0.1cm}\Baryc(\covectors)\cap \{ \pm 1\}^E$.  Therefore, one may ask for which subsets $\hcovectors$ of $\{ \pm 1, 0\}^E$, the set $\uparrow\hspace*{-0.1cm}\hcovectors\cap \{ \pm 1\}^E$ is ample. We prove that sign-convexity of  the upper closure of $\hcovectors$ is sufficient to characterize  ampleness of $\uparrow\hspace*{-0.1cm}\hcovectors\cap \{ \pm 1\}^E$:

\medskip\noindent
{\it {\rm(13)} Sign-convexity of upward closure:}   $\covectors(\hcovectors)$ is ample if and only if $\uparrow\hspace*{-0.1cm}\hcovectors$ is sign-convex.

\medskip
Furthermore,  $\hcovectors$  can be retrieved from $\covectors=\uparrow\hspace*{-0.1cm}\hcovectors\cap \{ \pm 1\}^E$ as
$\Baryc(\covectors)$. The previous result can be viewed as an analog of the characterization of oriented matroids via covectors, see \cite{BjLVStWhZi}.

To characterize the subsets $\hcovectors$ of $\{ \pm 1, 0\}^E$ for which $\uparrow\hspace*{-0.1cm}\hcovectors\cap \{ \pm 1\}^E$  is ample, we need yet another relaxation of convexity in ${\mathbb R}^E$: $K\subseteq {\mathbb R}^E$ is called \emph{$0$-convex} if for any $r',r''\in K$ and every $e\in E$ with $r'(e)r''(e)<0$ there exists some  $r\in K$ with $r(e)=0$ and $\sign(r(f))\in \{ 0, \sign(r'(f)),\sign(r''(f))\}$ for all $f\in E\setminus \{ e\}$.
Each weakly convex or sign-convex subset of  ${\mathbb R}^E$ is 0-convex but the converse is not true. In case of subsets of $\{ \pm 1, 0\}^E$, 0-convexity is analogous to the ``weak elimination'' axiom for signed circuits in oriented matroids  \cite{BjLVStWhZi} and is equivalent to the following  {\it signed-circuit axiom (SCA)}:

\medskip\noindent
{\it Signed-circuit axiom:} for all  $t',t''\in {\hcovectors}$ and   $e\in E$   with
$r'(e)\cdot t''(e)=-1$ there exists  some
$t\in {\hcovectors}$ such that  $t(e)=0$ and $t(f)\in \{ 0,t'(f),t''(f)\}$
for all  $f\in E.$


\medskip\noindent
{\it {\rm(14)} 0-Convexity and (SCA):}   $\uparrow\hspace*{-0.1cm}\hcovectors\cap \{ \pm 1\}^E$ is ample if and only if $\hcovectors$ is 0-convex, i.e., $\hcovectors$ satisfies (SCA).

\medskip
Every subset $\hcovectors$  of $\{ \pm 1,0\}^E$ can be extended to the cubihedron $[{\hcovectors}]$
consisting of all cubes whose
barycenters belong to $\hcovectors$. From the previous results it follows that the cubihedron $[{\hcovectors}]$ is
path-$\ell_1$-isometric if and only if $\hcovectors$ is 0-convex.

Ample sets $\covectors$ can also be characterized via their ``cocircuits'', i.e. the set $\Cocirc(\covectors)$ of
barycentric maps of the facets (maximal faces) of the associated cubihedron
$|{\covectors}|$. From our previous characterizations  we deduce that the cocircuits of ample sets are exactly the sign maps  that satisfy the signed-circuit axiom (SCA)  and consist of pairwise minimal (for $\prec$)  maps :

\medskip\noindent
{\it {\rm(15)} Cocircuits:}   $\Cocirc(\covectors)$ satisfies (SCA) and $t,t'\in \Cocirc(\covectors)$ with $t\prec t'$ implies $t=t'$.

\medskip
Furthermore, if $\hcovectors\subset \{ \pm 1,0\}^E$ satisfies (SCA) and pairwise minimality, then $\Cocirc(\covectors(\hcovectors))=\hcovectors$.

The
characterization of cocircuits ${\RR}\subseteq  \{ \pm 1,0\}^E$ of
oriented matroids also involves maximality, the signed-circuit
axiom, but, additionally, requires  the symmetry ($r\in {\RR}$ implies that
$-r\in {\RR}$) \cite{BjLVStWhZi}. Nevertheless, the lack of symmetry implies that ample sets can differ
substantially from oriented matroids regarding their combinatorial and geometric structure.

\medskip
The primary motivation of
Lawrence in  \cite{La} was to investigate and generalize
those subsets
$${\covectors}(K) := \{ s \in \{ \pm 1\}^E:
\{ t \in K: t(e)s(e) \geq 0 \mbox{ for all } e \in E\} \neq
\varnothing \}$$ of $\{ \pm 1\}^E$ which represent the closed orthants of ${\mathbb R}^E$
intersecting a convex subset $K$ of ${\mathbb R}^E$. He (as well as Wiedemann \cite{Wi}) showed that such sets
 are ample.  However,  not every ample set encodes
the orthant intersection pattern for a convex set in Euclidean
space; see \cite{La}. It comes close, though. As we  will show below, in order
to have a full geometric representation, one has to resort to a
weaker concept of weak convexity:

\medskip\noindent
{\it {\rm(16)} Realizability by weakly convex sets:} $\covectors$  encodes the orthant
intersection pattern for some weakly convex (path-$\ell_1$-isometric) set $K$ of
$({\mathbb R}^E,\vert\vert\cdot\vert\vert_1),$ that is, a sign
vector $s$ belongs to $\covectors$ exactly when the orthant determined by $s$
also includes a point from $K.$

\medskip
We also show that for a subset $\covectors$ of $\{ \pm 1\}^E$ the  (orthogonal) projection and geometric
realization commute exactly when $\covectors$ is ample:

\medskip\noindent
{\it {\rm(17)} Commutativity of projection and geometric realization:} $|{\covectors}|_A=|{\covectors}_A|$ holds for all $A\subseteq E$.

\medskip
Furthermore, instead of $|{\covectors}|_A=|{\covectors}_A|$, it is sufficient to consider only the equality between the topological dimensions of  $|{\covectors}|_A$ and $|{\covectors}_A|$:

\medskip\noindent
{\it {\rm(18)} Equality of  dimensions:}  $\dim \,(|{\covectors}|_A)=\dim \,(|{\covectors}_A|)$ holds for
  all $A\subseteq E$.


\subsection*{(Personal) historical note} Our journey in  the world of ample/lopsided sets started in December 1995, when A. Dress sent us a fax with the inequality $\#\underline{{\X}}({\covectors})\le \#{\covectors}\le \#\overline{{\X}}({\covectors})$ and the suggestion  to investigate the set families $\covectors$ for which $\#{\covectors}=\#\overline{{\X}}({\covectors})$. Soon after we come up with the largest part of the characterizations and examples presented in \cite{BaChDrKo} and several results of the present paper. In 1996 we discovered the paper by Lawrence \cite{La} and showed that his lopsided sets and our ample sets are the same. This somehow diminished our enthusiasm, nevertheless we continued working on the subject (as certified, among other documents, by a 100 pages fax sent to each of us by A. Dress from New York in 1998). In 2006 we completed the first part of the paper, which appeared in \cite{BaChDrKo} (some preliminary results have been announced in \cite{Dr}). Soon after the publication of \cite{BaChDrKo}, each of us got a letter from R. Wiedemann with a copy of his PhD \cite{Wi} from 1986, informing us that in \cite{Wi} he rediscovered ample sets
under the name ``simple sets''. In 2012 we finished the first version of the current paper and we came across the master
thesis of S. Moran \cite{Mo}, from which we learned that the ample/lopsided sets have been also independently rediscovered by Bollob\`as and Ratcliffe \cite{BoRa} under the name ``extremal sets'' and that the inequality $\#{\covectors}\le \#\overline{{\X}}({\covectors})$ was previously established by Pajor \cite{Pa}. The inequality
$\#{\covectors}\le \#\overline{{\X}}({\covectors})$ becomes of some importance in combinatorics and computational learning theory, in particular since it implies the classical Sauer-Shelah-Perles
inequality $\#L\le \binom{n}{\le d}$ (where $d$ is the VC-dimension of $L$), which is thoroughly used in the  theory of VC-dimension. Indeed,  the simplices of $\overline{{\X}}({\covectors})$  are  the shuttered by $\covectors$ subsets of $E$  and the size of a largest simplex of $\overline{{\X}}({\covectors})$ is the VC-dimension $d$.  Do to this (and in analogy to Sauer-Shelah-Perles
inequality), we suggest to call $\#\underline{{\X}}({\covectors})\le\#{\covectors}\le \#\overline{{\X}}({\covectors})$ the \emph{Dress-Pajor inequality}.

The set families $L$ for which $\#L=\binom{n}{\le d}$ are called ``maximum sets'' \cite{GaWe} and they are ample. Maximum sets have been investigated in  \cite{GaWe} and they represent important concept/hypothesis classes for which  the \emph{sample compression conjecture} \cite{FlWa} from computational learning theory was established. This also motivated the investigation and the solution of the sample compression conjecture for ample sets  \cite{MoWa} (see also \cite{ChChMoWa} for the unlabeled version of this conjecture).

Even though this article was not submitted for publication earlier, the obtained results inspired two of us and K. Knauer to introduce the  complexes of oriented matroids (called also conditional oriented matroids and abbreviated COMs) \cite{BaChKn}. COMs are defined in terms of covectors ($\{ \pm 1, 0\}^E$-vectors) using similar axioms as the Oriented Matroids (OMs) \cite{BjLVStWhZi}, only global symmetry and the existence of the
zero sign vector, required for Oriented Matroids, are replaced by local relative conditions.  COMs represent a far reaching common generalization of ample sets, OMs, rankings, and CAT(0) Coxeter zonotopal complexes.  COMs can be endowed with the structure of a cell complex, in which each cell is an OM, and this cell complex is contractible.
The ample sets are exactly the COMs in which all cells are cubes.
The graphs of topes of COMs have been characterized by Knauer and Marc \cite{KnMa}. Currently, both ample sets and COMs are subjects of active studies.


\subsection*{Organization}
The paper is organized in the following way. In Section \ref{preliminaries} we present the main definitions used in all other sections of the paper.  Sections \ref{weak-sign-convexity} and \ref{path_intrinsic} contain preliminary results. In Section \ref{weak-sign-convexity} we investigate  metric and combinatorial relaxations of usual convexity in linear and metric spaces, which are used in all our characterizations. In Section \ref{path_intrinsic}  we discuss the intrinsic path metrics associated with an arbitrary metric space $(\MM, d)$
(this section may be skipped at first reading). The next four sections present the main results of the paper. Sections \ref{metric-characterization}  and \ref{projection}
present characterizations of ample cubihedra via metric conditions, weak and sign-convexities,  and via projections. Then the geometric
realization of ample sets in terms of intersection pattern with orthants is established in Section \ref{realization}. Section \ref{circuit-cocircuit} investigates the concepts of circuits and cocircuits and their relation to the geometric structure of ample cubihedra.

\section{Preliminaries} \label{preliminaries}

\subsection{Maps and sign maps} Given a finite set $E$, we denote by $\#E$ its cardinality,
by $\{ \pm 1\}^E$  the set  of all maps from  $E$
into  $\{ \pm 1\}=\{ -1,+1\}$, by $\{ \pm 1,0\}^E$  the set  of all maps from  $E$
into  $\{ \pm 1,0\}$, and by ${\mathbb R}^E$ the vector space consisting of all maps $r$
from $E$ into $\mathbb R$. Clearly, $\{ \pm 1\}^E\subset \{ \pm 1,0\}^E\subset {\mathbb R}^E$.
For $r \in {\mathbb R}^E$ and $e\in E$, we denote by $\sign(r(e))\in \{ -1,0,+1\}$ the sign of $r(e)$.
To each map $r\in {\mathbb R}^E$ we associate its \emph{support} $\supp(r)=\{ e\in E: r(e)\ne 0\}$ and its  \emph{sign vector} $\sign(r)\in \{ \pm 1, 0\}^E$ whose coordinates are $\sign(r(e)), e\in E$.
For any $r',r''\in {\mathbb R}^E$, we denote by $\Delta(r',r'')=\{ e\in E: r'(e)\ne r''(e)\}$ their \emph{difference set} and by
$d(r',r'')$ their $\ell_1$-distance $||r',r''||_1=\sum_{e\in E} |r'(e)-r''(e)|=\sum_{e\in \Delta(r',r'')} |r'(e)-r''(e)|$. Let also $[r',r'']=\{ r\in {\mathbb R}^E: d(r',r)+d(r,r'')=d(r',r'')$. Note that $[r',r'']$ is an axis-parallel box.

Each  $e\in E$ defines a coordinate hyperplane $H_e=\{ r\in {\mathbb R}^E: r(e)=0\}$ of ${\mathbb R}^E$, which partitions ${\mathbb R}^E-H_e$ into \emph{positive} and \emph{negative} open halfspaces.   The set ${\mathcal H}=\{ H_e: e\in E\}$ is a \emph{central arrangement} of hyperplanes; ${\mathcal H}$  indices a partition ${\mathcal R}$ of ${\mathbb R}^E$ into open regions and recursively into regions defined by  the intersections of some  hyperplanes of ${\mathcal H}$. More specifically, each region $R\in \mathcal R$ is the intersection of a set $H_e, e\in E'$ of  hyperplanes of ${\mathcal H}$ and a set of positive or negative halfspaces, one halfspace for each  $H_f, f\in E-E'$.  The dimension of each $R$ varies between  $\#E$ and $0$. The regions of $\mathcal R$ of maximal dimension are the $2^{\# E}$ open orthants of ${\mathbb R}^E$.

For every point $r$ in ${\mathbb R}^E$, the sign vector $\sign(r)$  is the sign map
$s\in \{ \pm 1,0\}^E$ such that $s(e)=0$ if $r\in H_e$, $s(e)=+1$ if $r$ belongs to the positive halfspace of $H_e$ and $s(e)=-1$ if $r$ belongs to the negative halfspace of $H_e$. All points $r$ belonging to the same region $R$ of ${\mathcal R}$ have the same sign vector and any two points belonging to distinct regions of ${\mathcal R}$ have distinct sign vectors. In particular, all points belonging to the same open orthant $\OO$ of ${\mathbb R}^E$ have the same sign vector
$s\in \{ \pm 1\}^E$ and $\OO$ can be retrieved from $s$ as 
\begin{align*}
\OO=\{ r\in {\mathbb R}^E: ~r(e)\cdot s(e)>0  \mbox{ for all } e\in E\}.
\end{align*}
The closure $\widetilde{\OO}$ of $\OO$ is a closed orthant of ${\mathbb R}^E$ and $\widetilde{\OO}$ can be also identified by the sign map $s$ as $\widetilde{\OO}=\OO(s)$, where
\begin{align*}
\OO(s):=\{ r\in {\mathbb R}^E: ~r(e)\cdot s(e)\ge 0  \mbox{ for all } e\in E\}.
\end{align*}
Conversely, given $r\in {\mathbb R}^E,$ the set
\begin{align*}
\Sign(r):=\{ s\in \{ \pm 1\}^E: ~r(e)\cdot s(e)\ge 0 \mbox{ for all } e\in E\},
\end{align*}
indicates the closed orthants of ${\mathbb R}^E$ to which $r$ belongs.

\subsection{The hypercube $\HH(E)$ and  its faces} By $\HH(E)$ we denote the \emph{solid hypercube} $[-1,+1]^E\subset {\mathbb R}^E$ and consider $\HH(E)$ as a cube complex. By $H^{(1)}(E)$ we denote the graph representing the \emph{1-skeleton} of $\HH(E)$, i.e., the graph with vertex-set
$\{ \pm 1\}^E$  and whose edge set consists of all pairs $\{ s,s'\}$ of $\{ \pm 1\}^E$ such that $d(s,s')=2$ (equivalently, $\Delta(s,s')=1$). This implies that  the standard graph distance  in $H^{(1)}(E)$ between any two vertices $s',s''\in \HH^{(0)}$ is equal to $\frac{1}{2}d(s',s'')$.

\begin{definition} [Faces of $\HH(E)$ and their barycenters] A {\it face} $\FF$ of the hypercube $\HH(E)$ is a $A$-fiber of the form
$${\FF}=\HH(A)\times s_0 \mbox{ for some } s_0\in \{ \pm 1\}^{E-A}$$
with $A\subseteq E.$ By convention,  $\HH(E)$ is its $E$-fiber and its vertices are the faces that are $\varnothing$-fibers. The \emph{barycenter} of the face $\FF$ is the map $s\in \{ \pm 1,0\}^E$ such that $s(e)=0$ if $e\in A$ and $s(e)=s_0(e)\in \{ -1,+1\}$  if $e\in E-A$.     We say that two
faces $F',F''$ of $\HH(E)$ are {\it parallel} if there exist $A\subseteq E$ and  $s'_0,s''_0\in \{ \pm 1\}^{E-A}$ such that ${\FF}'=\HH(A)\times s'_0$ and ${\FF}''=\HH(A)\times s''_0$.   Parallel faces thus arise by intersecting
the hypercube with parallel (affine) $A$-{\it planes}
$$\{ p\in {\mathbb R}^E: ~p|_{E-A}=c\} \mbox{ for } c\in \HH(E-A).$$
For each $r\in \HH(E)$ denote by $F(r)$ the (necessarily unique) smallest face of $\HH(E)$ containing $r$.
\end{definition}

For each point $r$ of $\HH(E)$, the face $F(r)$  is determined by the coordinates $r(e)$ ($e\in E$) for which $-1<r(e)<+1:$
$$F(r)=\HH(E(r))\times r|_{E-E(r)}, \mbox{ where }$$
$$E(r):=\{ e\in E: ~-1<r(e)<+1\}, \mbox{ and }$$
$$r|_{E-E(r)}\in \{ \pm 1\}^{E-E(r)}.$$
The barycenter of the face $F(r)$ is given by the map from $E$ to $\{ \pm 1,0\}$ that is the zero map on $E(r)$ and coincides with $r|_{E-E(r)}$ elsewhere. 

\begin{figure}[t]
\vspace*{-3cm}
\centering\includegraphics[scale=0.45]{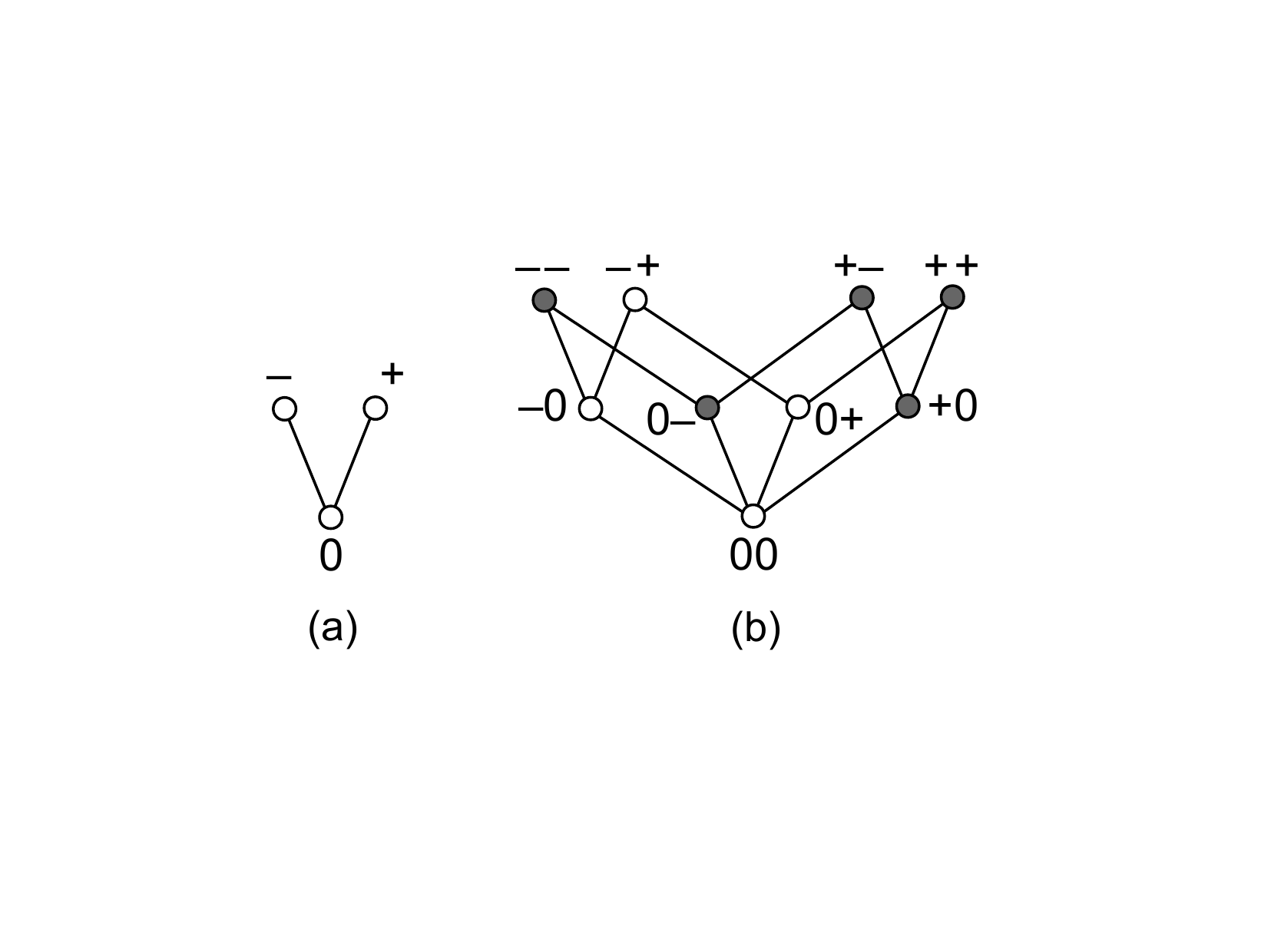}
\vspace*{-3cm}
\caption{(a) Ordering of signs. ~(b)  Product ordering on $\{ \pm 1,0\}^2$; the shaded nodes correspond to the barycenter maps of an ample set. }
\label{figure1}
\end{figure}

The set of all barycenters of  faces of $\HH(E)$ is  $\{ \pm 1,0\}^E$.  In order to express inclusion of faces of $\HH(E)$ in terms of the corresponding barycenter maps we use the standard ordering $\prec$ of signs $-1,+1,0$ for which $-1$ and $+1$
are incomparable, $0\prec -1$ and $0\prec +1$; see Fig. ~1(a).  The product ordering $\prec^E$ on $\{ \pm 1,0\}^E$ will also be denoted by $\prec$. The undirected Hasse diagram of $(\{ \pm 1,0\}^E,\prec)$ is a grid graph (viz., the Cartesian $E$-power of a path
with two edges) and will be denoted by $G(\{ \pm 1,0\}^E)$; see Fig. ~1(b) for $\# E=2$. Thus, $t_1\prec t_2$ for two maps $t_1,t_2\in \{ \pm 1,0\}^E$ holds if and only if $t_1(x)\in \{ 0,t_2(x)\}$ for all $x\in E,$ or equivalently, if for the associated faces the inclusion $F(t_2)\subseteq F(t_1)$ holds.

\subsection{Subsets of sign maps, their cubihedra and barycentric completions}
For a subset ${\covectors}$ of $\{ \pm 1\}^E$, we denote by ${\covectors}^*:=\{ \pm 1\}^E-{\covectors}$ the set-theoretic complement of $\covectors$ and by  $G({\covectors})$ the subgraph of  ${\HH}^{(1)}(E)$ induced by $\covectors$,
i.e., the graph with vertex-set $L$ and edge-set consisting of all edges of ${\HH}^{(1)}(E)$ between two vertices of $\covectors$ (again we suppose that the edges of $G(L)$ have length 2). The set $\covectors$ is
called {\it connected}
if $G({\covectors})$ is connected, and it is called {\it isometric} if every
pair of vertices $s',s''$ of $\covectors$ can be connected in $G({\covectors})$
by a path of length $d(s',s'').$

\begin{definition} [The cubihedron] The \emph{geometric realization} (a {\it cubihedron} for short) of  a set $L\subseteq \{ \pm 1\}^E$ is the cube complex $|{\covectors}|\subseteq \HH(E)$
consisting of all faces $\FF$ of the hypercube ${\HH}(E)$   for which ${\FF}\cap \{ \pm 1\}^E\subseteq {\covectors}.$ The vertices of $|{\covectors}|$ are  the elements of ${\covectors}$
and  the 1-skeleton $|{\covectors}|^{(1)}$ of $|{\covectors}|$ coincides with $G(\covectors)$. The \emph{facets} of $|\covectors|$ are the maximal by inclusion faces of $|\covectors|$.
\end{definition}

Notice that for each point $r$ in  $|\covectors|$, its smallest face $F(r)$ in the hypercube $\HH(E)$ necessarily belongs to $|\covectors|$.

\begin{definition} [The barycentric completion] For a subset $L$ of $\{ \pm 1\}^E$, the subset of $\{ \pm 1, 0\}^E$ consisting  of all barycenters of the faces of $|\covectors|$  is called the \emph{barycentric completion} of $\covectors$ and is denoted by $\Baryc(\covectors)$ (or $\Baryc(|\covectors|)$).
\end{definition}

\begin{definition} [The upward closure and the set of minima] \label{def:upper-closure} For a subset $\hcovectors\subseteq \{ \pm 1,0\}^E$,  the {\it upward  closure} $\uparrow\hspace*{-0.1cm}\hcovectors$ of $\hcovectors$ relative to the ordering $\prec$ is defined by
$$\uparrow\hspace*{-0.1cm}\hcovectors=\{ t'\in \{ \pm 1,0\}^E: ~t\prec t' \mbox{ for some } t\in \hcovectors\}.$$
Then $\hcovectors\subseteq \{ \pm 1,0\}^E$ is called \emph{upward closed} if $\uparrow\hspace*{-0.1cm}\hcovectors=\hcovectors$.

Denote also by $\Min(\hcovectors)=\{ t\in \hcovectors: ~t'\prec t \mbox{ implies } t'=t \mbox{ for all } t'\in \hcovectors\}$ the set of all minimal elements of the poset $(\hcovectors, \prec)$.
\end{definition}

We continue with two simple properties of upward closure:

\begin{lemma} \label{upper-set-baryc} For any subset $\hcovectors$ of $\{ \pm 1,0\}^E$, $\Baryc(\uparrow\hspace*{-0.1cm}\hcovectors\cap \{ \pm 1\}^E)=\uparrow\hspace*{-0.1cm}\hcovectors$.
\end{lemma}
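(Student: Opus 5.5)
The plan is to prove the two inclusions separately, writing $\covectors:=\uparrow\hspace*{-0.1cm}\hcovectors\cap\{\pm1\}^E$ throughout. Both inclusions rest on one dictionary fact from the preliminaries. For $t\in\{\pm1,0\}^E$, the face $F(t)$ with barycenter $t$ has vertex set $\{s\in\{\pm1\}^E:\ t\prec s\}$. This holds because $t\prec s$ means exactly that $s$ agrees with $t$ on $\supp(t)$ and is arbitrary on the zero coordinates of $t$. Consequently $t\in\Baryc(\covectors)$ if and only if every $s\in\{\pm1\}^E$ with $t\prec s$ lies in $\covectors$, i.e. lies in $\uparrow\hspace*{-0.1cm}\hcovectors$.

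\emph{Inclusion $\uparrow\hspace*{-0.1cm}\hcovectors\subseteq\Baryc(\covectors)$.} Let $t\in\uparrow\hspace*{-0.1cm}\hcovectors$, say $j\prec t$ with $j\in\hcovectors$. Every vertex $s$ of $F(t)$ satisfies $t\prec s$, so $j\prec s$ by transitivity of the product order $\prec$. Hence $s\in\uparrow\hspace*{-0.1cm}\hcovectors\cap\{\pm1\}^E=\covectors$. All vertices of $F(t)$ are therefore in $\covectors$, so $F(t)$ is a face of $|\covectors|$ and $t\in\Baryc(\covectors)$. This direction is routine.

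\emph{Inclusion $\Baryc(\covectors)\subseteq\uparrow\hspace*{-0.1cm}\hcovectors$.} This is where I expect the real obstacle. Let $t\in\Baryc(\covectors)$. For each vertex $s\succ t$ there is some $j_s\in\hcovectors$ with $j_s\prec s$. The task is to produce a single $j\in\hcovectors$ with $j\prec t$. The natural first attempt is induction on the number of zero coordinates of $t$. Pick $e$ with $t(e)=0$ and let $t^{\pm}$ be $t$ with the $e$-coordinate set to $\pm1$. Both $t^\pm$ lie in $\Baryc(\covectors)$, so by induction they lie in $\uparrow\hspace*{-0.1cm}\hcovectors$, say $j^\pm\prec t^\pm$. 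The step that is needed is to merge $j^+$ and $j^-$ into some $j\prec t$, which requires $j^+(e)=0$ or $j^-(e)=0$.

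Without further structure this merging step cannot be carried out, and in fact it fails already for $E=\{1,2\}$ and $\hcovectors=\{(0,+1),(0,-1)\}$. Here $\covectors=\{\pm1\}^E$, so $(0,0)\in\Baryc(\covectors)$, but $(0,0)\notin\uparrow\hspace*{-0.1cm}\hcovectors$. So for arbitrary $\hcovectors$ only the first inclusion holds, and the equality as worded cannot be proved. What can be proved, by the argument above, is $\uparrow\hspace*{-0.1cm}\hcovectors\subseteq\Baryc(\covectors)$, together with the equivalence that equality holds exactly when $\uparrow\hspace*{-0.1cm}\hcovectors$ already contains every $t$ all of whose vertices $s\succ t$ lie in $\uparrow\hspace*{-0.1cm}\hcovectors$. (For instance, equality holds trivially when $\hcovectors=\Baryc(\covectors')$ for some $\covectors'$, since that set is upward closed and closed in this sense.)

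In writing the argument up, I would record the dictionary fact and the first inclusion as the proof. I would flag the second inclusion as requiring this additional closure property; the example above is what prevents a proof of the reverse direction in general.
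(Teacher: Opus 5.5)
Your counterexample is correct: the statement as worded is false, and you were right not to force a proof of the reverse inclusion. With $E=\{1,2\}$ and $J=\{(0,+1),(0,-1)\}$ one gets $\uparrow J\cap\{\pm1\}^E=\{\pm1\}^E$. Its barycentric completion is all of $\{\pm1,0\}^E$, whereas $(0,0),(+1,0),(-1,0)\notin\uparrow J$.

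An even more transparent failure is the following. For any $J\subseteq\{\pm1\}^E$ one has $\uparrow J=J$, since elements of $\{\pm1\}^E$ are maximal for $\prec$. The lemma would then give $\Baryc(L)=L$ for every $L\subseteq\{\pm1\}^E$, i.e. $|L|$ would never contain an edge.

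The paper's one-line proof fails exactly where you located the obstacle. Its first sentence says that $\uparrow J$ is the set of barycenters of faces contained in some $F(t)$ with $t\in J$; this is correct and is your first inclusion. The second sentence claims these are \emph{exactly} the faces all of whose vertices lie in $\uparrow J$. That converse is false: the vertices of a face may be covered by different faces $F(t)$, $t\in J$, none of which contains the whole face. In your example, the edge $F((+1,0))$ has one endpoint in $F((0,+1))$ and the other in $F((0,-1))$. In the paper's notation this is the gap between $[J]$ and $|L(J)|$, which the paper elsewhere correctly records only as the inclusion $[J]\subseteq|L(J)|$.

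What survives is $\uparrow J\subseteq\Baryc(\uparrow J\cap\{\pm1\}^E)$, with equality only under extra hypotheses. It holds trivially when $J$ is itself a barycentric completion, as you note. More substantially, it holds when $J$ satisfies (SCA), by Theorem \ref{barycenters-ample}. In the proof of (v)$\Rightarrow$(vi) of that theorem, the authors themselves treat only $\uparrow J\subseteq\Baryc(L)$ as trivial. They derive $\Baryc(L)\subseteq\uparrow J$ from the isometry of $\uparrow J$ in the grid, which is consistent with your diagnosis.

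Your example also fits that theorem. $L=\{\pm1\}^E$ is ample, yet $J$ violates (SCA) at the coordinate $2$, and correspondingly $\uparrow J\neq\Baryc(L)$. Finally, the label of this lemma is reused for the next lemma (upward-closedness of $\Baryc(L)$), and that is the only result later cited under it. So the false inclusion does not appear to be used elsewhere in the paper.
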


\begin{proof} Indeed, $\uparrow\hspace*{-0.1cm}\hcovectors$ consists of the barycenters $t'$ of faces $F(t')$ included in faces $F(t)$ of $\HH(E)$ with $t\in \hcovectors$. These are exactly the barycenters of faces $F$ of $\HH(E)$ for which $F\cap\{ \pm 1\}^E\subseteq \uparrow\hspace*{-0.1cm}\hcovectors\cap \{ \pm 1\}^E$, i.e.,
$\Baryc(\uparrow\hspace*{-0.1cm}\hcovectors\cap \{ \pm 1\}^E)$.
\end{proof}

\begin{lemma} \label{upper-set-baryc} For any subset $\covectors$ of $\{ \pm 1\}^E$, $\Baryc(\covectors)$ is an upward closed subset of $\{ \pm 1,0\}^E$.
\end{lemma}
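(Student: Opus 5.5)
We have to show that whenever $t\in\Baryc(\covectors)$ and $t\prec t'$, the map $t'$ again lies in $\Baryc(\covectors)$. The argument simply translates the order $\prec$ on barycenter maps into inclusion of faces, and then uses the defining property of the cubihedron $|\covectors|$.

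First fix $t\in\Baryc(\covectors)$ and $t'\in\{\pm 1,0\}^E$ with $t\prec t'$. By the definition of the barycentric completion, $t$ is the barycenter of a face $F(t)$ of $\HH(E)$ belonging to $|\covectors|$. By the definition of the cubihedron, this means $F(t)\cap\{\pm 1\}^E\subseteq\covectors$.

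Next, recall from the preliminaries that $t\prec t'$ holds exactly when $F(t')\subseteq F(t)$. We can also check this directly on vertex sets. The vertices of $F(t)$ are the $s\in\{\pm 1\}^E$ with $s(e)=t(e)$ whenever $t(e)\ne 0$. Since $t(e)\in\{0,t'(e)\}$ for every $e$, any vertex $s$ of $F(t')$ agrees with $t'$ on $\supp(t')\supseteq\supp(t)$. Hence $s$ agrees with $t$ on $\supp(t)$, so $s$ is a vertex of $F(t)$. This gives
$$F(t')\cap\{\pm 1\}^E\subseteq F(t)\cap\{\pm 1\}^E\subseteq\covectors.$$

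Therefore $F(t')$ is a face of $|\covectors|$, and its barycenter $t'$ lies in $\Baryc(\covectors)$. This proves $\uparrow\hspace*{-0.1cm}\Baryc(\covectors)\subseteq\Baryc(\covectors)$. The reverse inclusion is trivial because $\prec$ is reflexive, so $\Baryc(\covectors)$ is upward closed.

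The only point needing care is the direction of the correspondence: $\prec$ reverses face inclusion, since $0$ is the bottom sign and a zero coordinate means a free coordinate, i.e. a larger face. Once that is checked on vertex sets as above, no real obstacle remains.
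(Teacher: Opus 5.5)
Your proof is correct and follows the paper's argument. Both proofs use the fact that $t\prec t'$ gives $F(t')\subseteq F(t)$, so $F(t')$ inherits membership in $|\covectors|$ from $F(t)$, and both note that the reverse inclusion is trivial; your check on vertex sets just spells out the face-inclusion fact that the paper quotes from the preliminaries.
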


\begin{proof} If $t\in \Baryc(\covectors)$ and $t\prec t'$, then $t'$ is the barycenter of the face $F(t')$ contained in $F(t)$. Since $F(t)$ belongs to $|L|$, $F(t')$ also belongs to $|L|$, whence $t'\in \Baryc(L)$. This shows that
$\uparrow\hspace*{-0.1cm}\Baryc(\covectors)\subseteq \Baryc(\covectors)$. The converse inclusion is obvious.
\end{proof}



\subsection{Intersection patterns and projections of subsets  of ${\mathbb R}^E$} To any subset $K$ of ${\mathbb R}^E$ we associate two sets of sign maps $L(K)\subseteq \{ \pm 1\}^E$ and $J(K)\subseteq \{ \pm 1, 0\}^E$,  encoding the intersection patterns of $K$ with the closed orthants of ${\mathbb R}^E$ and with the regions of the partition $\mathcal R$ of ${\mathbb R}^E$ induced by the arrangement $\mathcal H$, respectively.

\begin{definition} [$L(K)$ and $J(K)$]
Given any subset $K$ of ${\mathbb R}^E,$ let
\begin{align*}
L(K):=\bigcup_{r\in K}\Sign(r)=\{ s\in \{ \pm 1\}^E: ~~\exists r\in K \mbox{ with } r(e)\cdot s(e)>0 ~\forall e\in \supp(r)\}
\end{align*}
and
\begin{align*}
J(K):=\bigcup_{r\in K}\sign(r)=\{ t\in \{ \pm 1,0\}^E: ~\exists r\in K \mbox{ with } \sign_0(r)=t\}.
\end{align*}
\end{definition}

Thus $s\in \{\pm 1\}^E$ belongs to $L(K)$ exactly when $K\cap \OO(s)\ne\varnothing$, i.e. if exists some $r\in K$ with $r(e)\cdot s(e)\ge 0$ for all $e\in E$. Analogously, $t\in \{ \pm 1,0\}^E$ belongs to $J(K)$ exactly when there exists some $r\in K$ with $r(e)\cdot t(e)>0$ for all  $e\in \supp(r)$ and $t(e)=0$ for all $e\in E-\supp(r)$. Note also that $L(L(K))=L(K)$ and $J(J(K))=J(K)$.

To any subset $\hcovectors$ of $\{ \pm 1,0\}^E$  one associates the set of sign maps
\begin{align*}
L(\hcovectors)=\bigcup_{r\in \hcovectors}\Sign(r)=\uparrow\hspace*{-0.1cm}{\hcovectors}\cap \{ \pm 1\}^E=L(\uparrow\hspace*{-0.1cm}\hcovectors)
\end{align*}
and the corresponding geometric realization $|L(\hcovectors)|$ of $L(\hcovectors)$. This realization can be compared to the cube complex $[\hcovectors]$, which is defined as the union of the smallest faces $F(t)$
of $\HH(E)$ containing $t$ for $t\in \hcovectors$:
$$[\hcovectors]:=\bigcup_{t\in \covectors} F(t)~\subseteq ~|L(\hcovectors)|.$$
One can retrieve the upward closure $\uparrow\hspace*{-0.1cm}{\hcovectors}$ from $[\hcovectors]$ as
\begin{align*}
\uparrow\hspace*{-0.1cm}{\hcovectors}=\hcovectors([\hcovectors])=\Baryc([\hcovectors]).
\end{align*}

We now introduce the notation $K^A$ and $K_A$ for  subsets $K$  of $\HH(E)$  in analogy to $\covectors^A$ and $\covectors_A$ for subsets $\covectors$ of $\{ \pm 1\}^E.$ For a subset $K$ of ${\mathbb R}^E$ and a subset $A$ of $E$, we denote by $K_A$ the image of $K$ relative to  to the orthogonal projection of $K$ onto the $(E-A)$-plane:
$$K_A:=\{ r|_{E-A}: ~r\in K\}\subseteq H(E-A).$$
For a subset $K$ of $\HH(E)$ and $A\subseteq E$, the set
$$K^A:=\{ r|_{E-A}: ~\HH(A)\times r|_{E-A}\subseteq K\}\subseteq \HH(E-A)$$
encodes the location of the $A$-fibers in $K$ that are also $A$-fibers (faces) of $\HH(E).$


\subsection{Intrinsic metric}
Let $(M,\rho)$ be a metric space, $I\subset {\mathbb R}$ a non-empty interval
and $\gamma: I\rightarrow M$ a curve (i.e. a continuous map). We define the length $\ell(\gamma)\in [0,\infty]$
of $\gamma$ by $\ell(\gamma)=\sup\sum_{i=1}^k \rho(\gamma(t_{i-1},t_i)$, where the supremum is taken over all $k\in {\mathbb N}$ and all sequences $t_0\le t_1\le \ldots t_k$ in $I$.  We say that $\gamma$ is \emph{rectifiable} if
$\ell(\gamma)<\infty$. The \emph{intrinsic  metric} associated with $\rho$ is the function $\delta_{M,\rho}: M\times M \rightarrow [0,\infty]$ defined by $\delta^{M,d}(x,y)=\inf \ell(\gamma)$, where the infimum is taken over all rectifiable curves $\gamma:[0, 1]\rightarrow M$  from $x$ to $y$,
i.e. $\gamma(0) = x, \gamma(1) = y$. $(M, \rho)$ is called a \emph{length space} if $\delta^{M,d}=\rho$ (see the Appendix for other related notions and results).

A \emph{geodesic} of $(M,\rho)$ is the image of a continuous map $\gamma: [0,\alpha]\rightarrow M$ with $\alpha=\rho(r',r'')$ such that $\gamma(0)=r', \gamma(\alpha)=r''$ and $\rho(\gamma(s),\gamma(t))=|s-t|$ for all $s,t\in [0,\alpha]$. A metric space $(M,\rho)$ is a \emph{geodesic space} \cite{BrHa} if any two points $r',r''$ can be connected in $M$ by a geodesic. In this paper, we consider the geodesic metric space $({\mathbb R}^E, d)$, where $d$ is the $\ell_1$-distance. An \emph{$\ell_1$-geodesic} is a geodesic of  $({\mathbb R}^E, d)$.

A metric space $(\MM,d)$ is {\it Menger-convex} if for any two distinct points $x,y\in \MM$ there exists some point $z$ {\it between} $x$ and $y,$ that is, $z$ belongs to the {\it segment} $$[x,y]_{\MM}=\{z\in \MM: ~d(x,z) + d(z,y) = d(x,y)\}$$
such that, in addition, $z$ is different from $x$  and $y$. Menger \cite{Me} has shown that Menger-convexity in a complete metric space $(\MM,d)$
entails that $(\MM,d)$ is geodesic (see also \cite{Aro} or \cite[\S 18.5]{Ri}).

\section{Weak and sign convexities} \label{weak-sign-convexity} In this section we consider several relaxations of classical convexity in metric  spaces.
We consider every subset $K$ of ${\mathbb R}^E$ as a metric space $(K,d)$ relative to the metric induced on $K$ by the $\ell_1$-metric $d$ (we denote the induced metric also by $d$). Recall that a subset $K$ of  ${\mathbb R}^E$  is called \emph{convex} if it contains the linear segment between any two points $r',r''\in K$. Analogously,
 $K\subseteq {\mathbb R}^E$ is called \emph{$\ell_1$-convex} if it contains all $\ell_1$-geodesics between any two points $r',r''\in K$. Since the linear segments are  $\ell_1$-geodesics, every $\ell_1$-convex set is convex.
 For any $e\in E$, the coordinate hyperplane $H_e$, the open halfspaces $H^-_e,H^+_e$, and the closed halfspaces $\overline{H}^-_e,\overline{H}^+_e$ defined by $H_e$ are $\ell_1$-convex. Furthermore, the faces of the hypercube $\HH(E)$ are also $\ell_1$-convex. It is known (and easy to prove) that the $\ell_1$-convex subsets of ${\mathbb R}^E$ are gated in the following sense. A subset $K$ of any metric space $({\mathbb R}^E,d)$ is called {\it gated} \cite{DrScha}  if for
every point $r\in {\mathbb R}^E$ there exists a (necessarily unique) point $r'\in K,$ the {\it gate} of $r$ in $K$, for which $$d(r,q)=d(r,r')+d(r',q) \mbox{ for all } q\in K.$$

 Now, we consider  weaker versions of convexity.

\begin{definition} [Weak convexity] A subset $K$ of  ${\mathbb R}^E$ is called \emph{weakly convex} if $(K,d)$ is complete and Menger-convex, i.e., if for any two distinct points $r',r''\in K$ there exists some point $r\ne r',r''$ such that $d(r',r'')=d(r',r)+d(r,r'')$.
\end{definition}

Clearly, every convex subset of ${\mathbb R}^E$ is weakly convex. The following lemma can be viewed as a reformulation of the definition of weak convexity and follows from Menger's  theorem \cite{Me}
mentioned above:

\begin{lemma} \label{weak-convexity} For a subset $K$ of ${\mathbb R}^E$ such that $(K,d)$ is complete, the following conditions are equivalent:
\begin{itemize}
\item[\rm{(i)}] $K$ is weakly convex;
\item[\rm{(ii)}] $(K,d)$ is a length space;
\item[\rm{(iii)}] $K$ is {\it path-$\ell_1$-isometric} in the sense that the restriction of the $\ell_1$-metric $d$ on $K$ constitutes the intrinsic metric $\delta^{K,d}$ of $K$.
\end{itemize}
\end{lemma}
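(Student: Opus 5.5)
The plan is to prove the cycle (i)$\Rightarrow$(iii)$\Rightarrow$(ii)$\Rightarrow$(i). Completeness of $(K,d)$ is used only in the first step. Throughout, $\ell$ denotes the length of curves with respect to $d$, and $\delta^{K,d}(r',r'')=\inf\ell(\gamma)$ over curves $\gamma$ in $K$ joining $r'$ and $r''$. We always have $\delta^{K,d}\ge d$, because the length of a curve is at least the distance between its endpoints.

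\emph{(i)$\Rightarrow$(iii).} Suppose $K$ is weakly convex, so $(K,d)$ is complete and Menger-convex. By Menger's theorem \cite{Me} (see also \cite{Aro}), any two points $r',r''\in K$ are joined by a geodesic $\gamma:[0,\alpha]\to K$ with $\alpha=d(r',r'')$. Since $d(\gamma(s),\gamma(t))=|s-t|$, every partition sum of $\gamma$ equals $\alpha$, so $\ell(\gamma)=d(r',r'')$. Hence $\delta^{K,d}\le d$, and with the reverse inequality, $\delta^{K,d}=d$ on $K$. This is exactly path-$\ell_1$-isometricity.

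\emph{(iii)$\Rightarrow$(ii).} By definition, $(K,d)$ is a length space precisely when $\delta^{K,d}=d$. This is the content of (iii), so this step is purely definitional.

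\emph{(ii)$\Rightarrow$(i).} This is the main obstacle, since a length space need not be geodesic. We only need midpoints, and compactness gives them. Fix distinct $r',r''\in K$ and set $\alpha=d(r',r'')$.
\begin{itemize}
\item For each $n$, choose a curve $\gamma_n$ in $K$ from $r'$ to $r''$ with $\ell(\gamma_n)<\alpha+1/n$.
\item Length is continuous along a rectifiable curve, so there is a point $m_n$ on $\gamma_n$ splitting it into two arcs of length $\ell(\gamma_n)/2$ each.
\item Consequently $d(r',m_n)$ and $d(m_n,r'')$ are both at most $(\alpha+1/n)/2$.
\item The points $m_n$ lie in a bounded set of ${\mathbb R}^E$, so a subsequence converges in ${\mathbb R}^E$ to some point $m$.
\end{itemize}
The key point is that $m$ must lie in $K$. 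Here we use the standing hypothesis that $(K,d)$ is complete. The convergent subsequence is Cauchy in $(K,d)$, so its limit exists in $K$, and since $d$ is the ambient $\ell_1$-metric, that limit coincides with $m$. Continuity of $d$ then gives
\[
d(r',m)\le\alpha/2 \quad\text{and}\quad d(m,r'')\le\alpha/2.
\]
Combined with the triangle inequality, both are equalities. Thus $d(r',r'')=d(r',m)+d(m,r'')$, and $m\neq r',r''$ because $\alpha>0$. Hence $K$ is Menger-convex, and together with completeness it is weakly convex.

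In a general metric space the same midpoint argument would fail without local compactness. It works here because $K$ sits inside the finite-dimensional space ${\mathbb R}^E$, where bounded sequences have convergent subsequences.
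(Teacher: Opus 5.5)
Your proof is correct. The paper gives no argument beyond the remark that the lemma ``follows from Menger's theorem''. Together with the definitional equivalence (ii)$\Leftrightarrow$(iii), that remark covers exactly your steps (i)$\Rightarrow$(iii)$\Rightarrow$(ii).

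The converse (ii)$\Rightarrow$(i) does not follow from Menger's theorem, and completeness alone is not enough for it: a complete length space need not contain any point strictly between two given points. Your midpoint-by-compactness argument correctly supplies this direction, using that a closed subset of ${\mathbb R}^E$ is boundedly compact. It is the same idea the paper uses later in Lemma~\ref{delta}.

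One small correction: your opening claim that completeness is used only in the first step is backwards. In (i)$\Rightarrow$(iii), completeness is already part of the definition of weak convexity. The standing hypothesis is really needed in (ii)$\Rightarrow$(i), where you do invoke it: to place the limit point $m$ in $K$, and to conclude that $K$ is weakly convex.
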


We continue with two combinatorial relaxations of weak convexity:

\begin{definition} [Sign convexity]
A subset $K$ of ${\mathbb R}^E$ is called \emph{sign-convex} if for any two distinct points $r',r''\in K$ and every $e\in E$ with $r'(e)r''(e)<0$ there exists some point $r\in K$ with $r(e)=0$ and $\sign(r(f))\in \{ \sign(r'(f)),\sign(r''(f))\}$ for all $f\in E$ with $r'(f)r''(f)\ge 0$ (and no condition on $\sign(r(f))$ in case $r'(f)r''(f)<0$ and $f\ne e$).
\end{definition}

\begin{lemma} \label{weakconv->signconv} Every weakly convex (and hence every convex) subset $K$ of  ${\mathbb R}^E$ is sign-convex.
\end{lemma}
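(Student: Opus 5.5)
Let $K$ be weakly convex, and fix $r',r''\in K$ and $e\in E$ with $r'(e)r''(e)<0$. By Lemma \ref{weak-convexity}, $(K,d)$ is complete and Menger-convex, so Menger's theorem applies: there is an $\ell_1$-geodesic $\gamma:[0,\alpha]\to K$ from $r'$ to $r''$ with $\alpha=d(r',r'')$. The plan is to take the required point $r$ to be a point of $\gamma$ lying on the hyperplane $H_e$, and then to use that every point of an $\ell_1$-geodesic lies in the box $[r',r'']$.

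First, since $\gamma$ is continuous, the coordinate function $\tau\mapsto\gamma(\tau)(e)$ is continuous on $[0,\alpha]$. It changes sign between $\tau=0$ and $\tau=\alpha$. By the intermediate value theorem there is some $\tau_0$ with $\gamma(\tau_0)(e)=0$. Set $r:=\gamma(\tau_0)\in K$. Then $r(e)=0$ holds as required.

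Second, $r$ lies on a geodesic between $r'$ and $r''$, so $d(r',r)+d(r,r'')=d(r',r'')$, that is, $r\in[r',r'']$. Since the $\ell_1$-distance is a sum of coordinatewise absolute differences, and each coordinate satisfies the triangle inequality, equality of the sums forces equality in every coordinate:
$$|r'(f)-r(f)|+|r(f)-r''(f)|=|r'(f)-r''(f)| \quad\text{for all } f\in E.$$
Hence $r(f)$ lies in the closed real interval between $r'(f)$ and $r''(f)$. This is the statement that $[r',r'']$ is an axis-parallel box.

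Third, fix $f$ with $r'(f)r''(f)\ge 0$. There are two cases.
\begin{itemize}
\item If $r'(f)$ and $r''(f)$ are both nonzero, they have the same sign. The interval between them does not contain $0$, so $\sign(r(f))$ equals that common sign.
\item If one of them, say $r'(f)$, is zero, then $r(f)$ lies between $0$ and $r''(f)$. Thus $\sign(r(f))$ is either $0=\sign(r'(f))$ or $\sign(r''(f))$.
\end{itemize}
In both cases $\sign(r(f))\in\{\sign(r'(f)),\sign(r''(f))\}$, which is exactly the sign-convexity condition. Convex sets are weakly convex, so they are covered as well.

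The only point requiring care is the existence of a continuous geodesic inside $K$. Menger-convexity alone gives only a single intermediate point. Completeness, which is built into the definition of weak convexity, is what lets Menger's theorem (via Lemma \ref{weak-convexity}) upgrade this to a continuous path. That path is needed for the intermediate value argument.
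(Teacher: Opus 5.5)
Your proof is correct and follows the paper's argument: you connect $r'$ and $r''$ by an $\ell_1$-geodesic in $K$ (via Menger's theorem), take a point of it on $H_e$, and read off the signs. The paper phrases the last step as $\ell_1$-convexity of the coordinate hyperplanes and open/closed halfspaces, whereas you use the equivalent coordinatewise box description of $[r',r'']$ and make the intermediate value step explicit.

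One small remark applies equally to the paper. A non-closed convex set is not complete, so it is not weakly convex in the paper's sense. However, the straight segment from $r'$ to $r''$ is already an $\ell_1$-geodesic in $K$, so your argument covers that case directly.
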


\begin{proof} Pick any $r',r''\in K$ and $e\in E$ with $r'(e)r''(e)<0$. Since $K$ is weakly convex, $r'$ and $r''$ can be connected in $K$ by an $\ell_1$-geodesic $\gamma(r',r'')$. Consider the coordinate hyperplane $H_{e}$  defined by $e$.  Since  $r'(e)r''(e)<0$, $r'$ and $r''$ belong to distinct open halfspaces defined by $H_e$. Therefore $\gamma(r',r'')$ intersects $H_e$ in a point $r$. Then $r(e)=0$, thus $\sign(r(e))=0$. Now, consider any $f\in E$ such that $r'(f)r''(f)\ge 0$. This implies that  $r'$ and $r''$ belong to the same closed halfspace defined by the hyperplane $H_f$, say $r',r''$ belong to the closed negative halfspace $\overline{H}_f^-$. Since  $\overline{H}_f^-$ is $\ell_1$-convex, the geodesic $\gamma(r',r'')$ as well as the point $r$ also belong to $\overline{H}_f^-$. Therefore $r(f)\le 0$ and thus $\sign(r(f))\in \{ -1,0\}$. If one of the points $r',r''$ belong to $H_f$ and the second point belong to the open halfspace $H^-_f$, then $\{ \sign(r'(f)),\sign(r''(f))\}=\{ -1,0\}$, yielding $\sign(r(f))\in \{ \sign(r'(f)),\sign(r''(f))\}$. If both points $r',r''$ belong
to $H_f$, then $r$ also belongs to $H_f$ because $H_f$ is $\ell_1$-convex and we get $\sign(r(f))=\sign(r'(f))=\sign(r''(f))=0$. Finally, if both points $r',r''$ belong
to the open halfspace $H^-_f$, then $r$ also belongs to $H^-_f$ because $H^-_f$ is $\ell_1$-convex and we get $\sign(r(f))=\sign(r'(f))=\sign(r''(f))=-1$. In all cases, $r'(f)r''(f)\ge 0$ implies $\sign(r(f))\in  \{ \sign(r'(f)),\sign(r''(f))\}$ as required.
\end{proof}

Weakly convex subsets of ${\mathbb R}^E$ are path $\ell_1$-isometric, thus they are connected. On the other hand, sign-convex sets are not necessarily connected, in particular, they may be finite. For example, the set $\{ \pm 1,0\}^E$ is sign-convex. The following result relates the sign-convexity of subsets of ${\mathbb R}^E$ with the sign-convexity of subsets of $\{ \pm 1,0\}^E$.

\begin{lemma}\label{sign-convex} A subset $K$ of ${\mathbb R}^E$ is sign-convex if and only if the subset $J(K)$ of $\{ \pm 1,0\}^E$ is sign-convex. Consequently, $K\subseteq {\mathbb R}^E$ is sign-convex if and only if there exists $S\subseteq \{ \pm 1,0\}^E$ such that $\hcovectors(K)=\hcovectors(S)$.
\end{lemma}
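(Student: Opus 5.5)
The key observation is that sign-convexity of a set $K\subseteq{\mathbb R}^E$ depends only on the sign vectors of its points. Both the hypothesis and the conclusion in the definition are phrased via $\sign(r'(f))$, $\sign(r''(f))$ and $\sign(r(f))$. In particular, the conditions $r'(e)r''(e)<0$ and $r'(f)r''(f)\ge 0$ are equivalent to $\sign(r'(e))\sign(r''(e))=-1$ and $\sign(r'(f))\sign(r''(f))\ge 0$, respectively. I would therefore prove the equivalence by transporting triples of points along the map $r\mapsto \sign(r)$, which sends $K$ onto $J(K)$.

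For the direction ``$K$ sign-convex $\Rightarrow$ $J(K)$ sign-convex'', take $t',t''\in J(K)$ and $e$ with $t'(e)t''(e)<0$. Choose $r',r''\in K$ with $\sign(r')=t'$ and $\sign(r'')=t''$. Then $r'(e)r''(e)<0$, and for each $f$ with $t'(f)t''(f)\ge0$ we also have $r'(f)r''(f)\ge 0$. Sign-convexity of $K$ yields $r\in K$ with $r(e)=0$ and $\sign(r(f))\in\{t'(f),t''(f)\}$ on those $f$. Put $t=\sign(r)\in J(K)$. Since $\sign$ is idempotent on $\{\pm1,0\}^E$, we get $t(f)=\sign(t(f))$, so $t$ is the required point. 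The only care needed is the distinctness requirement in the definition: if $t'\neq t''$ fails, no $e$ with $t'(e)t''(e)<0$ exists, so nothing is to be checked. In the reverse direction $r'\ne r''$ is automatic, since $r'(e)\neq r''(e)$.

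For the converse, take $r',r''\in K$ and $e$ with $r'(e)r''(e)<0$. Apply sign-convexity of $J(K)$ to $t'=\sign(r')$ and $t''=\sign(r'')$, which are distinct, to obtain $t\in J(K)$. Then pick any $r\in K$ with $\sign(r)=t$. By the same translation of the sign conditions, $r$ satisfies the requirements.

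For the ``consequently'' part, first note $J(J(K))=J(K)$, as remarked after the definition of $J(K)$. If $K$ is sign-convex, take $S=J(K)$. Conversely, if $J(K)=J(S)$ for a sign-convex $S\subseteq\{\pm1,0\}^E$, then $J(S)=S$, and by the first part applied to $S$ itself this set is sign-convex; the first part applied to $K$ then gives that $K$ is sign-convex. I would read $S$ here as a sign-convex set, since otherwise the statement is trivially true with $S=J(K)$ and says nothing. The main obstacle is only bookkeeping: verifying that every condition in the definition is expressed purely through signs, including the weak inequality $r'(f)r''(f)\ge0$, which corresponds to the product of signs being nonnegative.
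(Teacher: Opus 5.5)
Your proof of the main equivalence is the same as the paper's: you transport witnesses along $r\mapsto\sign(r)$, using that $r'(f)r''(f)\ge 0$ holds exactly when $\sign(r'(f))\sign(r''(f))\ge 0$. The paper gives no argument for the ``consequently'' clause, and requiring $S$ to be sign-convex is the right repair; with that reading your argument via $J(S)=S$ is correct. One small correction to your side remark: without that requirement the clause is not vacuous but false, because its right-hand side always holds with $S=J(K)$, so it would assert that every $K$ is sign-convex.
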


\begin{proof} Pick any $r',r''\in K$ and let $t'=\sign(r'), t''=\sign(r'')$. By definition, for any $f\in E$, the inequality $t'(f)t''(f)\ge 0$ holds if and only if the inequality $r'(f)r''(f)\ge 0$ holds. First, suppose that $K$ is sign-convex. To show that $J(K)$ is sign-convex,  pick any $t',t''\in J(K)$ and $e\in E$ such that  $t'(e)=-1$ and $t''(e)=+1$.  Let $r',r''\in K$ be such that $\sign(r')=t'$ and $\sign(r'')=t''$. Then $r'(e)<0$ and $r''(e)>0$. Since $K$ is sign-convex, there must exist some $r\in K$ such that
$r(e)=0$ and $\sign(r(f))\in \{ \sign(r'(f)),\sign(r''(f))\}$ for any $f\in E$ such that $r'(f)r''(f)\ge 0$.
Let $t=\sign(r)$. Then $t(e)=\sign(r(e))=0$.  Furthermore, since $t'(f)t''(f)\ge 0$ if and only if $r'(f)r''(f)\ge 0$, we get $t(f)=\sign(r(f))\in \{ \sign(r'(f)),\sign(r''(f))\}=\{ t'(f),t''(f)\}$
for any such $f\in E$ such that $t'(f)t''(f)\ge 0$.

Conversely, suppose that $K$ is a subset of ${\mathbb R}^E$ such that $J(K)$ is sign-convex. To prove that $K$ is sign-convex let $r',r''\in K$ and $e\in E$ such that $r'(e)r''(e)<0$. Let $t'=\sign(r')$ and $t''=\sign(r'')$. Then $t'(e)t''(e)<0$. Since $t',t''\in J(K)$ and $J(K)$ is sign-convex,
there exists $t\in J(K)$ such that$t(e)=0$ and $t(f)\in \{ t'(f),t''(f)\}$ for all $f\in E$ such that $t'(f)t''(f)\ge 0$. Pick any $r\in K$ such that $\sign(r)=t$. Then $\sign(r(e))=t(e)=0$, whence $r(e)=0$. Furthermore,  since $t'(f)t''(f)\ge 0$ if and only if $r'(f)r''(f)\ge 0$, we get  $\sign(r(f))=t(f)\in \{ t'(f),t''(f)\}=\{\sign(r'(f)),\sign(r''(f))\}$ for any $f\in E$ such that $r'(f)r''(f)\ge 0$. This concludes the proof.
\end{proof}

\begin{definition} [$0$-convexity]
A subset $K$ of ${\mathbb R}^E$ is called \emph{$0$-convex} if for any two points $r',r''\in K$ and every $e\in E$ with $r'(e)r''(e)<0$ there exists some point $r\in K$ with $r(e)=0$ and $\sign(r(f))\in \{ 0, \sign(r'(f)),\sign(r''(f))\}$ for all $f\in E\setminus \{ e\}$.
\end{definition}

Notice that 0-convexity of subsets of  $\{ \pm 1, 0\}^E$  comes from the following axiom, which is analogous to the ``weak elimination'' axiom for signed circuits in oriented matroids (see \cite[Definition 3.2.1]{BjLVStWhZi}). We say that a subset ${\hcovectors}\subseteq \{ \pm 1, 0\}^E$ satisfies the {\it signed-circuit axiom (SCA)} if the following condition holds:
\begin{align*}
\mbox{ for all } t',t''\in {\hcovectors}, \mbox{ and }  e\in E  \mbox{ with  }
t'(e)\cdot t''(e)=-1 \mbox{ there exists  some}\tag{\mbox{SCA}}\\
t\in {\hcovectors} \mbox{ such that } t(e)=0 \mbox{ and }
t(f)\in \{ 0,t'(f),t''(f)\}  \mbox{ for all } f\in E.
\end{align*}
Consequently,  a subset $\hcovectors$ of $\{\pm 1, 0\}^E$ is  $0$-convex if and only if it satisfies (SCA).

The difference between sign-convexity and 0-convexity is that for $f\in E$ with $r'(f)r''(f)=+1$,  sign-convexity requires that $r(f)=r'(f)=r''(f)$, while 0-convexity allows $r(f)$ to be $0$ or $r'(f)=r''(f)$. If $r'(f)r''(f)=0$ and say $r'(f)=0$, both conditions require that $r(f)\in \{ 0,r''(f)\}$. Finally, if  $r'(f)r''(f)=+1$, then both sign- and 0-convexities  do not impose any constraint on the sign of $r(f)$. Therefore, the following holds (where the second assertion follows from  the first assertion and Lemma \ref{sign-convex}):

\begin{lemma}\label{sign-convexity->SCA} If $K$ is a sign-convex  subset $K$ of  ${\mathbb R}^E$, then the sets $K$ and   $\hcovectors(K)$ are 0-convex.
\end{lemma}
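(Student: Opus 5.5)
The plan is to compare the two definitions coordinate by coordinate and show that the witness supplied by sign-convexity also meets the 0-convexity requirements. Fix $r',r''\in K$ and $e\in E$ with $r'(e)r''(e)<0$; then $r'\ne r''$, so the distinctness hypothesis in the definition of sign-convexity causes no trouble. Sign-convexity gives a point $r\in K$ with $r(e)=0$ and $\sign(r(f))\in\{\sign(r'(f)),\sign(r''(f))\}$ whenever $r'(f)r''(f)\ge 0$. I will check that this same $r$ satisfies the 0-convexity condition at every $f\ne e$.

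There are two cases.
\begin{itemize}
\item If $r'(f)r''(f)\ge 0$, then $\sign(r(f))$ already lies in $\{\sign(r'(f)),\sign(r''(f))\}\subseteq\{0,\sign(r'(f)),\sign(r''(f))\}$.
\item If $r'(f)r''(f)<0$, then $\{\sign(r'(f)),\sign(r''(f))\}=\{-1,+1\}$. Hence $\{0,\sign(r'(f)),\sign(r''(f))\}$ is all of $\{-1,0,+1\}$, and this coordinate imposes no constraint at all.
\end{itemize}
So $K$ is 0-convex. Note that this is exactly where 0-convexity is weaker than sign-convexity: for coordinates on which $r'$ and $r''$ have opposite signs, 0-convexity asks for nothing.

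For the second assertion, apply Lemma \ref{sign-convex}: since $K$ is sign-convex, so is $\hcovectors(K)\subseteq\{\pm1,0\}^E\subseteq\mathbb R^E$. The first assertion, applied to the set $\hcovectors(K)$ in place of $K$, then shows that $\hcovectors(K)$ is 0-convex, equivalently that it satisfies (SCA).

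I expect no real obstacle. The only care needed is in the bookkeeping of the case $r'(f)r''(f)<0$, and in noting that the distinctness requirement in sign-convexity is automatic because $r'(e)$ and $r''(e)$ have opposite signs.
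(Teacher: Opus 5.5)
Your proof is correct and follows the paper's own argument. You compare the two definitions coordinate by coordinate, noting that coordinates where $r'$ and $r''$ have opposite signs impose no constraint under 0-convexity. You then invoke Lemma \ref{sign-convex} to pass sign-convexity to $\hcovectors(K)$ and apply the first assertion there; your observation that $r'\ne r''$ is automatic is a small detail the paper leaves implicit.
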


Any subset $\hcovectors$ of $\{ \pm 1,0\}^E$ containing the zero map $(0,\ldots,0)$ is $0$-convex but not necessarily sign-convex, showing that $0$-convexity is weaker than sign-convexity. However, for upward closed subsets of  $\{ \pm 1,0\}^E$, sign-convexity and 0-convexity are equivalent.  Moreover, each of them is equivalent to a ``Menger-convexity type'' condition, requiring that the sign map $r$ in the definitions of sign- and 0-convexity belongs to the box $[r',r'']$.


\begin{proposition} \label{sign-convex-upper-closed} For an upward closed subset $\hcovectors$
of $\{ \pm 1,0\}^E$ the following conditions are equivalent:
\begin{itemize}
\item[\rm(i)] $\hcovectors$ is sign-convex;
\item[\rm(i$'$)] for any $t_1,t_2\in \hcovectors$ and $e\in E$ with $t_1(e)t_2(e)=-1$ there exists some $t_0\in [t_1,t_2]\cap \hcovectors$ with $t_0(e)=0$ and $t_0(f)\in \{ t_1(f),t_2(f)\}$ for any $f\in E$ such that $t_1(f)t_2(f)\ge 0$;
\item[\rm(ii)] $\hcovectors$ is $0$-convex;
\item[\rm(ii$'$)] for any $t_1,t_2\in \hcovectors$ and $e\in E$ with $t_1(e)t_2(e)=-1$ there exists some $t_0\in [t_1,t_2]\cap \hcovectors$ with $t_0(e)=0$ and $t_0(f)\in \{ 0, t_1(f),t_2(f)\}$ for any $f\in E$ such that $t_1(f)t_2(f)\ge 0$.
\end{itemize}
Every upward closed sign-convex (0-convex) subset $\hcovectors$ of $\{ \pm 1,0\}^E$ induces an isometric
subgraph $G(\hcovectors)$ of the grid $G(\{ \pm 1, 0\}^E)$.
\end{proposition}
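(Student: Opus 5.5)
The equivalences follow a chain: the primed conditions imply the unprimed ones, (i)$\Rightarrow$(ii) is free, and the only real work is (ii)$\Rightarrow$(i$'$), which uses upward closedness. Then (ii$'$)$\Rightarrow$(ii) and (i$'$)$\Rightarrow$(i) are trivial, since we only drop the requirement $t_0\in[t_1,t_2]$. The implication (i)$\Rightarrow$(ii) is Lemma \ref{sign-convexity->SCA} applied to $K=\hcovectors$; equivalently, sign-convexity is pointwise stronger than 0-convexity. Also (i$'$)$\Rightarrow$(ii$'$) is immediate. So the whole proposition reduces to proving (ii)$\Rightarrow$(i$'$).

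For (ii)$\Rightarrow$(i$'$), let $t_1,t_2\in\hcovectors$ with $t_1(e)t_2(e)=-1$. By (SCA) there is $t\in\hcovectors$ with $t(e)=0$ and $t(f)\in\{0,t_1(f),t_2(f)\}$ for all $f$. I then modify $t$ upward to a $t_0$ with $t\prec t_0$, so that $t_0\in\hcovectors$ by upward closedness:
\begin{itemize}
\item set $t_0(e)=0$;
\item for $f\ne e$ with $t_1(f)t_2(f)\ge 0$ and some $t_i(f)\neq 0$, set $t_0(f)$ equal to that common nonzero sign;
\item for $t_1(f)=t_2(f)=0$, set $t_0(f)=0$;
\item for $t_1(f)t_2(f)=-1$, keep $t_0(f)=t(f)$.
\end{itemize}
We must check that $t\prec t_0$. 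The only issue is when $t(f)\ne 0$. Then $t(f)\in\{t_1(f),t_2(f)\}$; in the second and third cases this nonzero value must be the common nonzero sign, and the third case is impossible. So $t\prec t_0$ holds.

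Next, $t_0\in[t_1,t_2]$ requires $t_0(f)$ to lie coordinatewise between $t_1(f)$ and $t_2(f)$:
\begin{itemize}
\item on $e$, $0$ lies between $-1$ and $+1$;
\item when the signs agree, or one is $0$ and the other is $\pm1$, the chosen value is one of the endpoints;
\item when $t_1(f)t_2(f)=-1$, $t(f)\in\{-1,0,+1\}$ lies in $[-1,1]$.
\end{itemize}
The required sign condition $t_0(f)\in\{t_1(f),t_2(f)\}$ holds by construction. This gives (i$'$).

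For the isometry claim, I show that for $t_1\ne t_2$ in $\hcovectors$ there is a neighbour of $t_1$ in $G(\hcovectors)$ strictly closer to $t_2$ in grid distance; induction then finishes. Grid distance is $\sum_f |t_1(f)-t_2(f)|$, with steps of size 1 along the Hasse diagram. Pick $f$ with $t_1(f)\ne t_2(f)$ and split into three cases:
\begin{itemize}
\item If $t_1(f)=0$, raising $t_1(f)$ to $t_2(f)$ gives an element above $t_1$, hence in $\hcovectors$, at distance one less.
\item If $t_1(f)\neq 0$ and $t_2(f)=0$, lowering is not automatically allowed, so I use the other coordinates instead. If some coordinate has $t_1=0\ne t_2$, raise it as in the first case.
\item Otherwise, either some coordinate has $t_1t_2=-1$, or $t_2\prec t_1$.
\end{itemize}
In the subcase $t_2\prec t_1$, we have $t_2\prec t_1'\prec t_1$, where $t_1'$ is $t_1$ with coordinate $f$ set to $0$. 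So $t_1'\in\hcovectors$ by upward closedness, and it is a closer neighbour.

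In the case of an opposite coordinate $e$, apply (i$'$) to get $t_0\in[t_1,t_2]\cap\hcovectors$ with $t_0\ne t_1$. Then $t_0$ lies on a grid geodesic, and $d(t_1,t_0)<d(t_1,t_2)$ and $d(t_0,t_2)<d(t_1,t_2)$ in grid distance. By induction on distance, $t_1$--$t_0$ and $t_0$--$t_2$ are joined by geodesic paths in $G(\hcovectors)$, whose concatenation is geodesic because $t_0\in[t_1,t_2]$.

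The main obstacle is this last step: running the induction cleanly and verifying that the interval $[t_1,t_2]$ in $\mathbb R^E$ matches grid betweenness on $\{\pm1,0\}^E$. I would check that identification first, since it holds coordinatewise.
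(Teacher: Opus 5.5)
Your proof is correct and follows essentially the paper's route. The key step is the same upward modification of the (SCA) witness into the box $[t_1,t_2]$; you go (ii)$\Rightarrow$(i$'$) directly, whereas the paper passes through (ii$'$). Your isometry argument uses the same three ingredients as the paper's minimal-counterexample argument with a box meeting $\hcovectors$ only in its endpoints: raising a zero coordinate by upward closedness, invoking (i$'$) at an opposite coordinate, and lowering a coordinate of $t_1$ when $t_2\prec t_1$. You phrase it as a forward induction on grid distance instead, and the betweenness identification you flag as the main obstacle is immediate, since the grid distance on $\{\pm 1,0\}^E$ is exactly $\sum_f |t_1(f)-t_2(f)|$.
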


\begin{proof} The implication  {\rm(i$'$)}$\Rightarrow${\rm(i)} is  trivial and the implication {\rm(i)}$\Rightarrow${\rm(ii)}  follows from Lemma \ref{sign-convexity->SCA}. To prove  {\rm(ii)}$\Rightarrow${\rm(ii$'$)}, suppose that $\hcovectors$ is an upward closed 0-convex subset of $\{ \pm 1, 0\}^E$. Pick any $t_1,t_2\in \hcovectors$ and $e\in E$ with $t_1(e)t_2(e)=-1$.
$0$-Convexity yields some $t\in \hcovectors$ with $t(f)\in \{ 0,t'(f),t''(f)\}$ for all $f\in E-\{ e\}$ and $t(e)=0.$ Then the map $t_0$ defined by
$$
t_0(f):=\begin{cases}
t'(f) &\mbox{ if } t(f)=0 \mbox{ and } f\ne e,\\
t(f) &\mbox{ otherwise }
\end{cases}
$$
dominates $t$ and hence belongs to $\uparrow\hspace*{-0.1cm}\hcovectors\cap [t',t'']=\hcovectors\cap [t',t''].$

To prove  {\rm(ii$'$)}$\Rightarrow${\rm(i$'$)}, let $t_1,t_2\in \hcovectors$ and $e\in E$ with $t_1(e)t_2(e)=-1$.  Condition {\rm(i$'$)} yields some $t\in [t_1,t_2]\cap \hcovectors$ with $t(e)=0$ and $t(f)\in \{ 0, t_1(f),t_2(f)\}$ for any $f\in E\setminus \{ e\}$. Now, pick any $f\in E\setminus \{ e\}$ such that $t_1(f)t_2(f)\ge 0$. If $t(f)\notin \{ t_1(f),t_2(f)\}$, this implies that $t(f)=0$ and $t_1(f),t_2(f)\in \{ \pm 1\}^E$. Since $t_1(f)t_2(f)\ge 0$, we conclude that $t_1(f)=t_2(f)$. Since $t\in [t_1,t_2]$ and $t(f)=0$, we obtain a contradiction.

Finally, we prove that for every upward closed sign-convex subset $\hcovectors$ of $\{ \pm 1,0\}^E$,  $G(\hcovectors)$ is an isometric subgraph of  $G(\{ \pm 1, 0\}^E)$. Suppose by way of contradiction that there were some distinct $t_1,t_2\in\hcovectors$ which are not adjacent in the
graph $G(\{ \pm 1,0\}^E)$ such that the box $[t_1,t_2]$ in $\HH(E)$ intersects $\hcovectors$ only in $t_1$ and $t_2.$ If for some $e\in E$ we have $t_1(e)\cdot t_2(e)=-1,$ then there exists some $t_0\in [t_1,t_2]\cap\hcovectors$ with $t_0(e)=0$ by condition {\rm(i$'$)}. Then $t_0\notin \{ t_1,t_2\}$ and we obtain a contradiction with the initial hypothesis. Therefore $t_1(f)\cdot t_2(f)\ge 0$ for all $f\in E,$ that is, all coordinates of $t_1$ and $t_2$ have comparable signs. Consequently,
the join $t$ of $t_1$ and $t_2$ exists in the ordered set $(\{ \pm 1,0\}^E,\prec)$ and is given by $t(f)=\mbox{min}\{ t_1(f),t_2(f)\}$ for all $f\in E.$ Then
$t\in [t_1,t_2]\cap \uparrow\hspace*{-0.1cm}\hcovectors=[t_1,t_2]\cap \hcovectors=\{ t_1,t_2\},$ say $t_2\prec t=t_1.$ Since $t_1$ and $t_2$ are not adjacent in the graph $G(\{ \pm 1,0\}^E)$, there exist at least two distinct coordinates $e$ and $f$ at which they differ, whence $t_2(e)=t_2(f)=0$ and $t_1(e),t_1(f)\in \{ \pm 1\}.$ Then the map $t\in \{ \pm 1, 0\}^E$ defined by
$$
t(g):=\begin{cases}
0 &\mbox{ if } g=e,\\
t_1(g) &\mbox{ if } g\ne e
\end{cases}
$$
is different from $t_1$ and $t_2$ but belongs to $[t_1,t_2]\cap \uparrow\hspace*{-0.1cm}\hcovectors=[t_1,t_2]\cap \hcovectors,$ yielding a final contradiction. This establishes that  $G(\hcovectors)$ is an isometric subgraph of $G(\{ \pm 1, 0\}^E)$.
\end{proof}

We conclude this section by showing that 0-convexity of subsets of $\{ \pm 1, 0\}^E$ is equivalent to 0-convexity of their upward closures:

\begin{lemma} \label{SCA-upper-closure} A subset $\hcovectors$ of $\{ \pm 1, 0\}^E$ is 0-convex if and only if its upward closure  $\uparrow\hspace*{-0.1cm}\hcovectors$ is 0-convex.
\end{lemma}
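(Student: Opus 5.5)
Both directions reduce to (SCA), since for subsets of $\{\pm 1,0\}^E$ being $0$-convex is the same as satisfying (SCA). The plan is to pass between $\hcovectors$ and $\uparrow\hspace*{-0.1cm}\hcovectors$ by moving down and up in the order $\prec$, using one observation: if $t\prec t'$, then $t(f)\in\{0,t'(f)\}$ for every $f\in E$.

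\emph{Forward direction.} Assume $\hcovectors$ satisfies (SCA). Take $u',u''\in\uparrow\hspace*{-0.1cm}\hcovectors$ and $e\in E$ with $u'(e)u''(e)=-1$.
\begin{itemize}
\item Choose $t',t''\in\hcovectors$ with $t'\prec u'$ and $t''\prec u''$. Then $t'(e)\in\{0,u'(e)\}$ and $t''(e)\in\{0,u''(e)\}$.
\item If $t'(e)t''(e)=-1$, apply (SCA) in $\hcovectors$ to $t',t''$. This gives $t\in\hcovectors\subseteq\uparrow\hspace*{-0.1cm}\hcovectors$ with $t(e)=0$ and $t(f)\in\{0,t'(f),t''(f)\}\subseteq\{0,u'(f),u''(f)\}$ for all $f$. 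So $t$ is the required element.
\item Otherwise one of them, say $t'$, already satisfies $t'(e)=0$. Then $t'$ itself works, because $t'\in\uparrow\hspace*{-0.1cm}\hcovectors$, $t'(e)=0$, and $t'(f)\in\{0,u'(f)\}$ for all $f$.
\end{itemize}

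\emph{Backward direction.} Assume $\uparrow\hspace*{-0.1cm}\hcovectors$ satisfies (SCA). Take $t',t''\in\hcovectors$ and $e$ with $t'(e)t''(e)=-1$.
\begin{itemize}
\item Since $\hcovectors\subseteq\uparrow\hspace*{-0.1cm}\hcovectors$, (SCA) for the closure gives $u\in\uparrow\hspace*{-0.1cm}\hcovectors$ with $u(e)=0$ and $u(f)\in\{0,t'(f),t''(f)\}$ for all $f$.
\item Choose $t\in\hcovectors$ with $t\prec u$. Then $t(f)\in\{0,u(f)\}\subseteq\{0,t'(f),t''(f)\}$ for all $f$, and in particular $t(e)=0$.
\item Hence $t$ witnesses (SCA) for $\hcovectors$.
\end{itemize}

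No step is a real obstacle. The one point needing care is the forward direction when the minorants $t',t''$ no longer have opposite signs at $e$; the case split above handles it.
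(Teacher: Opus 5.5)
Your proof is correct and follows essentially the same route as the paper. In the forward direction you pass to minorants in $\hcovectors$ and either apply (SCA) there or take a minorant that already vanishes at $e$; in the backward direction you push the witness found in $\uparrow\hspace*{-0.1cm}\hcovectors$ down to an element of $\hcovectors$ below it, and you spell out the containments $t(f)\in\{0,u(f)\}\subseteq\{0,t'(f),t''(f)\}$ that the paper leaves implicit.
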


\begin{proof} Trivially, $\hcovectors$ and $\uparrow\hspace*{-0.1cm}\hcovectors$ have the same set of minimal elements: $\Min(\hcovectors)=\Min(\uparrow\hspace*{-0.1cm}\hcovectors)$.
Assume that $\hcovectors$ is 0-convex and let $t'_1,t'_2\in \uparrow\hspace*{-0.1cm}\hcovectors$
with $t_1\prec t'_1$ and $t_2\prec t'_2$ for some $t_1,t_2\in \hcovectors.$ If $t'_1(e)\cdot t'_2(e)=-1$ for some $e\in E,$ then $t_0$ can be chosen to be one of
$t_1,t_2$ in case that $0\in \{ t_1(e),t_2(e)\}.$ Otherwise, one may choose $t_0$ in $\hcovectors$ with $t_0(e)=0$ and $t_0(f)\in \{ 0,t_1(f),t_2(f)\}$ for all $f\in E.$
Conversely, assume that $\uparrow\hspace*{-0.1cm}\hcovectors$ is 0-convex. Since the minimal choices relative to $\prec$ for establishing 0-convexity
in $\uparrow\hspace*{-0.1cm}\hcovectors$ belong to $\hcovectors$, it follows that 0-convexity holds for $\hcovectors$ as well.
\end{proof}

The analogue of Lemma \ref{SCA-upper-closure} does not hold for sign-convexity:  $\{ \pm 1,0\}^E$ is the upper closure of any subset $\hcovectors$ containing $(0,\ldots,0)$, however $\hcovectors$ is not necessarily sign-convex.


\section{Intrinsic path metrics}\label{path_intrinsic}
In this section we recall some basic notions about intrinsic path metrics and the length of paths, which are relevant for the intrinsic metrics of cubihedra.

In an  arbitrary metric space $(\MM,d)$ one can trivially define the {\it length $\ell (P)$ of a finite $x,y$-path}
$P$ of points  $x =: t_0, t_1,\ldots, t_k := y$ as the sum
$\ell(P):=\sum_{i=1}^k d(t_{i-1},t_i).$
The {\it modulus} of this path $P$ is the maximum of all single step lengths $d(t_{i-1},t_i)$ for $i=1,...,k.$ Then the infimum
$$\mbox{inf} \{ \ell(P): ~P \mbox{ is a finite } x,y\mbox{-path in } \MM \mbox{ with modulus}(P) <\epsilon\}$$
of the lengths of all finite paths between two points $x$ and $y$ in $\MM$ having modulus smaller than $\epsilon$ exists because the
length of every finite path between $x$ and $y$ is bounded below by $d(x,y)$ by virtue of the triangle inequality. Then the supremum
$$\delta_{\MM,d}(x,y):=\mbox{sup}\{\mbox{inf}\{ \ell(P): ~P \mbox{ is a finite } x,y\mbox{-path in } \MM \mbox{ with modulus}(P)<\epsilon\}: ~\epsilon\in\mathbb{R}^+\}$$
will be called the {\it (intrinsic) finite-path distance} between $x$ and $y$ in the metric space $(\MM,d)$, which may formally take
the value $\infty$ of the extended real line when the supremum does not exist. If, however, all values
are real numbers then we could speak of the {\it (intrinsic) finite-path metric} of $(\MM,d)$, since evidently $(\MM,d)$ satisfies the triangle inequality.

If there exists a (general) $x,y$-{\it path} in $(\MM,d)$, that is, a continuous map $\gamma: [0,1] \rightarrow \MM$ with $\gamma(0)=x$ and $\gamma(1)=y$, then its length \cite[p.11]{Pa}
$$L(\gamma):=\sup \Big\{ \sum \limits ^k_{i=1}d(\gamma(t_{i-1}),\gamma(t_i)):
  0\le t_0\le t_1\le \ldots \le t_k\le 1 \mbox{ where } k\ge 1\Big\}$$
is an upper bound for the lengths $\ell(P)$ of all finite $x,y$-paths $P$ contained in $\gamma$. If an $x,y$-path $\gamma$ exists with $L(\gamma)<\infty$, then $x$ and $y$ are said to be {\it connected by a rectifiable path} in $(\MM,d)$ \cite[p.35]{Pa}. Taking the infimum over all general $x,y$-paths, we obtain
$$\delta^{\MM,d}(x,y):=\mbox{inf}\{ L(\gamma): ~\gamma \mbox{ is an } x,y\mbox{-path in} ~\MM\},$$
referred to as the {\it path} (or {\it length}) {\it distance} in $\MM$ relative to $d$ (in the book \cite{Pa} the length distance is denoted by $d_{\ell}$).
If each pair of points in $\MM$ is connected by a rectifiable path, then by \cite[Proposition 2.1.5]{Pa} $\delta^{\MM,d}$ is a metric on $\MM$, referred to
the {\it (intrinsic) path metric} of $\MM$ associated to $d.$ We thus have the inequalities
$$d(x,y)\le \delta_{\MM,d}(x,y)\le \delta^{\MM,d}(x,y),$$
with the understanding that either $\delta$ value could equal $\infty$. In the extreme case, $\delta_{\MM,d}$  could be a metric whereas
$\delta^{\MM,d}$ is the constant $\infty$ map on pairs of distinct points. Consider, for example, the rational unit interval $M =\mathbb{Q}\mbox [0,1]$
with its natural metric $d$: since paths of any modulus exist,
the intrinsic path metric coincides with the natural metric, but geodesics between distinct points do not exist, whence the intrinsic length distance is infinite.
Therefore the advantage of using $\delta_{\MM,d}$ rather than $\delta^{\MM,d}$  is that we do not need to impose existence of rectifiable paths. In the
important case that $(\MM,d)$ is a {\it length} (or  {\it  path-metric) space}  \cite[p.35]{Pa}, that is, $d$ coincides with $\delta^{\MM,d}$,
both concepts of intrinsic metrics can be equated with the original metric $d.$ The property $\delta_{\MM,d}=d$ carries over to dense subspaces of
$(\MM,d)$.  In an arbitrary length space $(\MM,d)$ general $x,y$-paths of length $d(x,y)$, that is, $x,y$-geodesics, need not exist. When however
they exist for all pairs of points then the space is called {\it geodesic}. A geodesic space $(\MM,d)$ is called a {\it real tree} if every
geodesic constitutes the unique path between its end points. Note that a compact real tree $({\MM},d)$ may have infinitely many
{\it branching points}, i.e. points $p$ for which ${\MM}- \{ p\}$ has at least three connected components.

Every finite graph $G=(V,E)$ has a trivial geometric realization as a  1-dimensional cell complex obtained by replacing each edge $\{ x,y\}$ by a solid link, that is, a
copy $[u_x,u_y]$ of the unit interval $[0,1]$ of the real line such that two copies intersect in an endpoint exactly when the corresponding edges are incident. The
resulting geometric graph (alias network) $\MM$ inherits its metric $\delta$ from the standard-graph metric $d$ of $G$ and the usual distance of $[0,1].$ Informally speaking,
the distance  $\delta(p,q)$ between a point $p$ from a link $[u_v,u_w]$ and a point $q$ from a link $[u_x,u_y]$ in the geometric graph $\MM$ is the smallest of the four sums
$|p-u_v|+d(v,x)+|q-u_x|, |p-u_v|+d(v,y)+|q-u_y|,|p-u_w|+d(w,x)+|q-u_x|,$  and  $|p-u_w|+d(w,y)+|q-u_y|.$ More formally, one can apply the general construction of the
length metric as described in \cite[Example 2.1.3(iv)]{Pa} or \cite[Section I.1.9]{BrHa}. In this way, one obtains a geodesic
space $(\MM,\delta).$ Similarly, in ad hoc manner, one could deal with the geometric realization of
any finite cubical complex, but for lopsided sets we can obtain the length space property for the associated geometric realizations in a canonical way; see below.

Completeness is an essential prerequisite in Menger's proof that complete Menger-convex spaces are geodesic. Indeed, the intersection
$(\MM,d)$ of the Cantor set with the irrational (open) unit interval is Menger-convex but $\delta_{\MM,d}(x,y)=\infty$ holds for all pairs $x,y$ of
distinct points. In the absence of Menger-convexity the requirement $\delta_{\MM,d}<\infty$ together with some local compactness condition can at
least guarantee that $(\MM,\delta_{\MM,d})$ is a geodesic space. We say that a metric space is {\it boundedly compact} if every closed bounded
subset is compact. Note that in the context of Riemannian geometry and length spaces one usually calls these spaces ``proper''.

\begin{lemma} \label{delta} If a boundedly compact metric space $(\MM,d)$ admits a finite-path metric $\delta_{\MM,d}<\infty$, then $(\MM,\delta_{\MM,d})$
is a geodesic space, whence $\delta_{\MM,d}=\delta^{\MM,d}$ holds.
\end{lemma}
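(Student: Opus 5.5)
The plan is to build, for each pair $x,y\in\MM$, a path realizing $\delta:=\delta_{\MM,d}(x,y)$ by a midpoint-type construction driven by bounded compactness. This gives geodesicity of $(\MM,\delta_{\MM,d})$. The equality $\delta_{\MM,d}=\delta^{\MM,d}$ then follows from the inequalities $d\le\delta_{\MM,d}\le\delta^{\MM,d}$ recorded above, once we check that the constructed path is continuous for $d$ and has $d$-length at most $\delta$.

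\textbf{Step 1: approximate midpoints.} Fix $x,y$ and write $\delta=\delta_{\MM,d}(x,y)<\infty$. For every $\epsilon>0$ there is a finite $x,y$-path $P_\epsilon$ with modulus $<\epsilon$ and $\ell(P_\epsilon)\le\delta+\epsilon$. Walking along $P_\epsilon$, pick the first point $z_\epsilon$ at which the accumulated length reaches $\ell(P_\epsilon)/2$. The initial segment of $P_\epsilon$ is a finite $x,z_\epsilon$-path of modulus $<\epsilon$ and length at most $(\delta+\epsilon)/2+\epsilon$, and the terminal segment gives the symmetric bound for $z_\epsilon,y$. All $z_\epsilon$ lie in the closed $d$-ball of radius $\delta+2\epsilon$ around $x$, which is compact, so a sequence $z_{\epsilon_k}$ converges to some $z$.

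\textbf{Step 2: exact midpoints (the main obstacle).} We must show $\delta_{\MM,d}(x,z)\le\delta/2$ and $\delta_{\MM,d}(z,y)\le\delta/2$, i.e.\ lower semicontinuity of $\delta_{\MM,d}$ under $d$-limits. I would handle this with a splicing argument. For fixed $\eta>0$ and $k$ large, $d(z_{\epsilon_k},z)<\eta$. Append the single step $z_{\epsilon_k}\to z$ to the $x,z_{\epsilon_k}$-path. This yields an $x,z$-path of modulus $<\eta$ (once $\epsilon_k<\eta$) and length at most $\delta/2+O(\epsilon_k)+\eta$. Hence the infimum at modulus level $\eta$ is at most $\delta/2+\eta$. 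That bound depends on $\eta$, so the argument must be arranged with care: for a given modulus threshold $\eta_0$, take $\eta\le\eta_0$ arbitrarily small. The infimum over paths of modulus $<\eta_0$ is then at most $\delta/2+\eta$ for every $\eta$, so it is at most $\delta/2$; taking the supremum over $\eta_0$ gives $\delta_{\MM,d}(x,z)\le\delta/2$. The triangle inequality for $\delta_{\MM,d}$ (concatenation of paths) gives the reverse inequality, so $z$ is an exact $\delta_{\MM,d}$-midpoint.

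\textbf{Step 3: geodesic by dyadic midpoints.} Iterate Step 2 to define $\gamma$ on the dyadic rationals of $[0,1]$, with $\delta_{\MM,d}(\gamma(a),\gamma(b))=|a-b|\delta$ for dyadic $a,b$. The equality on dyadic pairs comes from the additivity of distances along the construction together with the triangle inequality. Since $d\le\delta_{\MM,d}$, $\gamma$ is $d$-Lipschitz on the dyadics. Bounded compactness gives $d$-completeness, so $\gamma$ extends $d$-continuously to $[0,1]$. Lower semicontinuity from Step 2 keeps $\delta_{\MM,d}(\gamma(s),\gamma(t))\le|s-t|\delta$ in the limit, and the triangle inequality again upgrades this to equality, so $\gamma$ is a $\delta_{\MM,d}$-geodesic.

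Finally, $\gamma$ is a $d$-continuous $x,y$-path whose $d$-length is at most its $\delta_{\MM,d}$-length, which equals $\delta$. Hence $\delta^{\MM,d}(x,y)\le\delta_{\MM,d}(x,y)$, and combined with $\delta_{\MM,d}\le\delta^{\MM,d}$ this gives equality. The delicate point throughout is Step 2: the semicontinuity and the modulus bookkeeping must be arranged so that limits preserve the bound at every modulus scale.
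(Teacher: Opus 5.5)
Your proof is correct and follows the same route as the paper. Both take as approximate midpoint the first point past half the length on fine finite paths, extract a $d$-convergent subsequence by bounded compactness, and splice the limit point into those paths to get an exact $\delta_{\MM,d}$-midpoint, then iterate dyadically and extend to $[0,1]$ by completeness. Your version differs only in presentation: you handle the final extension step via lower semicontinuity of $\delta_{\MM,d}$ under $d$-limits plus the triangle inequality, and you bound the $d$-length of the resulting curve explicitly to get $\delta^{\MM,d}\le\delta_{\MM,d}$, which is cleaner than the paper's betweenness argument and its implicit ``whence''.
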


\begin{proof} For any distinct points $x,y$ we can approximate $\alpha:=\delta_{\MM,d}(x,y)$ by finite
$x,y$-paths of moduli $1/n$  $(n\rightarrow\infty)$. Specifically, for every $n\in \mathbb N$ there exists a finite $x,y$-path $P_n$ of length smaller
than $\alpha(1 + 1/(2n))$ and modulus smaller than $\alpha/(2n)$. On each path $P_n$ pick the first point $y_n$ for which the initial
$x,y_n$-subpath $P'_{n}$ exceeds length $\alpha/2$, whence the length of the final $y_n,y$-subpath of $P_n$ is then smaller than
$\alpha(1 + 1/(2n)).$ Since the sequence $(y_n)$ is contained in the closed $2\alpha$-ball centered at $x$, it includes a
subsequence $(z_n)$ converging to some point $z$ such that, say, $d(z_n,z)<\alpha/(2n)$. Inserting this limit point $z$ into each $P_n$
directly after $y_n$ yields a path consisting of an initial $x,z$-subpath plus a final $z,y$-subpath both of modulus smaller than $\alpha/(2n)$
and with lengths between  $\alpha/2-1/(2n)$ and $\alpha/2+1/n.$ Therefore
$$\delta_{\MM,d}(x,z)+\delta_{\MM,d}(z,y)\le \alpha/2+ \alpha/2=\alpha=\delta_{\MM,d}(x,y),$$
whence by virtue of the triangle inequality $z$ is the midpoint of the segment between $x$ and $y$ relative to the finite path metric $\delta_{\MM,d}$.
We can now iterate this midpoint construction by applying the procedure first to the pairs $x,z$ and $z,y,$ and so on. Then we eventually obtain a
dense subset $Z$ of the segment $[x,y]$ relative to the metric $\delta_{\MM,d}$, admitting an isometry $\zeta$ from $(Z,\delta_{\MM,d}|_{Z\times Z})$
to the set of $\alpha$-multiples of the dyadic fractions of 1 endowed with the natural metric.

Every non-dyadic number $\tau$ from the unit interval is the limit of some sequence $(\tau_n)$ of dyadic fractions of 1, for which the $\alpha$-multiples
each have a pre-image $u_n$ under $\zeta$ in $Z$. Then $(u_n)$ is a Cauchy sequence relative to $\delta_{\MM,d}$ and hence to $d$, which converges to some
point $u$ due to completeness of $(\MM,d)$. For any two points $v$ and $w$ of $Z$ with $\zeta(v)<\alpha\tau$ and $\zeta(w)>\alpha\tau$ almost all $u_n$
are between $v$ and $w$ relative to $\delta_{\MM,d}$. Therefore one can approximate $\delta_{\MM,d}(v,w)$ by pairs of concatenated finite $v,u_n$-paths
and $u_n,w$-paths of moduli converging to 0 and total lengths converging to $\delta_{\MM,d}(v,w)$ $(n\rightarrow\infty)$. Substituting $u_n$ by $u$ in
all paths yields a sequence of concatenated paths approximating both $\delta_{\MM,d}(v,u) +\delta_{\MM,d}(u,w)$ and $\delta_{\MM,d}(v,w)$. This shows
that $u$ is between $v$ and $w$ relative to $\delta_{\MM,d}$. In summary, we have thus established an isometry from the closure $\overline{Z}$ of
$Z$ in $(\MM,\delta_{\MM,d})$ to the interval $[0,\alpha]$, where $\overline{Z}$ is a $x,y$-geodesic in $(\MM,\delta_{\MM,d}).$
\end{proof}

Let $X$ and $Y$ be two sets of maps from some disjoint nonempty sets $A$ and $B$, respectively, to  a set $\Lambda$. Then we write $X\times Y$ for
the set of all maps $r:A\cup B\rightarrow \Lambda$ for which $r|_A$ belongs to $X$ and $r|_B$ belongs to $Y.$ For singletons $X$ or $Y$, set brackets
are omitted. Given a finite set $E$ and a nonempty subset $A$ of $E,$ the set $\Lambda^A\times r_0$ for any $r_0\in \Lambda^{E-A}$ is called a {\it fiber}
of the Cartesian power $\Lambda^E,$ namely the $A$-{\it fiber} of $\Lambda^E$ {\it at} $q_0\times r_0$ for any $q_0\in \Lambda^A.$ If $\Lambda$ is endowed
with a (natural) metric, then we will denote by $d$ the product metric $d^E$ on the the product space $\Lambda^E$, where $E$ is a finite set. When $\Lambda$
is connected, a subspace $K$ of $\Lambda^E$ is called {\it fiber-connected} if the intersection of $K$ with each fiber of $\Lambda^E$ is
connected (or empty). Similarly, when $\Lambda$ is a geodesic space, $K\subseteq \Lambda^E$ is said to be {\it fiber-geodesic} if $K$
intersects each fiber of $\Lambda^E$ in a geodesic subspace (or the empty set). The $E$-fiber of $\Lambda^E$ is understood to be the entire space $\Lambda^E$.

\begin{proposition} \label{fiber-isometric} Let $\Lambda$ be a boundedly compact geodesic space. For a closed subset $K$ of some finite power ${\Lambda}^E$
of $\Lambda$ (endowed with the product metric $d=d^E$), the following statements are equivalent:

\begin{itemize}
\item[\rm(i)] the intrinsic finite-path metric $\delta_{{K},d}$ of $K$ coincides with the restriction $d|_{K}$
of the product distance $d=d^E$ of ${\Lambda}^E$;
\item[\rm(ii)] ${K}$ is a geodesic subspace of ${\Lambda}^E$;
\item[\rm(iii)] ${K}$ is fiber-geodesic.
\end{itemize}
\end{proposition}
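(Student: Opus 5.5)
The plan is to prove the cycle (ii)$\Rightarrow$(iii)$\Rightarrow$(i)$\Rightarrow$(ii), using that the product metric $d=d^E$ is the $\ell_1$-sum of the coordinate metrics of $\Lambda$. The key fact is that a curve $\gamma$ in $\Lambda^E$ is a geodesic if and only if each coordinate curve $\gamma_e$ is monotone, in the sense that $d(\gamma_e(s),\gamma_e(u))=d(\gamma_e(s),\gamma_e(t))+d(\gamma_e(t),\gamma_e(u))$ for all $s\le t\le u$. This holds because $\sum_e d(\gamma_e(0),\gamma_e(1))=d(\gamma(0),\gamma(1))=L(\gamma)\ge \sum_e L(\gamma_e)$, which forces $L(\gamma_e)=d(\gamma_e(0),\gamma_e(1))$ for every $e$.

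\emph{(i)$\Rightarrow$(ii).} Since $K$ is closed in the boundedly compact space $\Lambda^E$ (a finite product of boundedly compact spaces), $K$ is itself boundedly compact. Hypothesis (i) gives $\delta_{K,d}=d|_K<\infty$, so Lemma~\ref{delta} applies. It shows that $(K,\delta_{K,d})=(K,d)$ is geodesic, so $K$ is a geodesic subspace.

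\emph{(ii)$\Rightarrow$(iii).} Fix a fiber $\Phi=\Lambda^A\times r_0$ and two points $x,y\in K\cap\Phi$. Take a $d$-geodesic $\gamma$ in $K$ from $x$ to $y$. For each coordinate $f\in E-A$ we have $\gamma_f(0)=\gamma_f(1)=r_0(f)$, and monotonicity of $\gamma_f$ forces $\gamma_f$ to be constant. Hence $\gamma\subseteq\Phi\cap K$, and it is a geodesic there for the restricted metric. This shows $K\cap\Phi$ is geodesic, and the case of the whole $E$-fiber is exactly (ii).

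\emph{(iii)$\Rightarrow$(i).} Since $\Lambda^E$ itself is the $E$-fiber, (iii) already says that $K\cap\Lambda^E=K$ is geodesic in $d$. Then for any $x,y\in K$ the points of a geodesic subdivided finely give finite paths of arbitrarily small modulus and length exactly $d(x,y)$. Thus $\delta_{K,d}\le d|_K$, and the reverse inequality is the triangle inequality. With the convention stated before the proposition that the $E$-fiber is the entire space, this is immediate.

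I would also note, in case the intended reading of (iii) excludes the full fiber, that one could instead induct on $\#E$. Given $x,y\in K$, one would use geodesicity in lower-dimensional fibers to move one coordinate at a time. This is the step I expect to be the main obstacle, since concatenating fiber geodesics need not yield a $d$-geodesic unless the coordinates change monotonically. So I would rely on the paper's convention and keep the short argument above.
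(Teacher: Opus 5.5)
Your proposal is correct and follows the paper's proof. The only substantive step, (i)$\Rightarrow$(ii), is done exactly as in the paper: the closed set $K$ inherits bounded compactness from $\Lambda^E$, and Lemma~\ref{delta} is applied with $\delta_{K,d}=d|_K<\infty$. Likewise, (iii)$\Rightarrow$(ii) rests, as in the paper, on the convention that the $E$-fiber is all of $\Lambda^E$; you are right to rely on it, since two points of $\mathbb{R}^2$ with no common coordinate show the convention cannot be dropped. The implications the paper calls clear you justify correctly, notably (ii)$\Rightarrow$(iii): length additivity over coordinates forces each coordinate curve $\gamma_f$ with $f\in E-A$ to be constant.
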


\begin{proof} Clearly, (ii)$\Rightarrow$(i)\&(iii)  and (iii)$\Rightarrow$(ii) both hold. Since $\Lambda$ is boundedly compact and
$E$ is finite, ${\Lambda}^E$ is also boundedly compact. This carries over to $({K},d|_{K})$ because $K$ is a closed subset
of ${\Lambda}^E$.   If (i) holds, then $(K,\delta_{{K},d|_{K}})$ is a geodesic
space by Lemma \ref{delta}, because  $\delta_{{K},d|_{K}}=d|_{K}<\infty$.  
Therefore $K$ is a geodesic subspace of ${\Lambda}^E$, thus establishing the implication (i)$\Rightarrow$(ii).
\end{proof}

 Proposition \ref{fiber-isometric} can be applied, for instance, to the closed subsets $K$ of the Euclidean space ${\mathbb R}^E:$ thus, the restriction
 of the Euclidean ($\ell_2$-)metric to $K$ constitutes the intrinsic (finite-) path metric of $K$ relative to the Euclidean metric exactly
 when $K$ is closed under taking line segments, that is, $K$ is convex. In the $\ell_1$ case condition (iii) of Proposition \ref{fiber-isometric}
 can be weakened further depending on the space $\Lambda$. Namely, in order to replace fiber-geodesity by fiber-connectedness, the factors need to be real trees.

\begin{proposition} \label{fiber-connected} Let  $\Lambda$ be a boundedly compact real tree (equipped with the natural metric) in which every segment includes only
finitely many branching points. Then a closed subset $K$ of some finite power ${\Lambda}^E$ of $\Lambda$ is a geodesic subspace of $\Lambda^E$ if and only
if $K$ if fiber-connected.
\end{proposition}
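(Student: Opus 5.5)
The plan is to prove the two directions separately, with the converse done by induction on $\#E$ and reduced via Proposition~\ref{fiber-isometric}.

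\emph{Geodesic $\Rightarrow$ fiber-connected.} Let $\gamma$ be a geodesic in $K$ between two points $x,y$ of a fiber $\Lambda^A\times r_0$. Since $d=\sum_{e\in E}d_\Lambda$, the triangle inequality in each coordinate forces each coordinate projection $\gamma_f$ to be a geodesic of $\Lambda$ from $x_f$ to $y_f$. For $f\in E-A$ we have $x_f=y_f$, so $\gamma_f$ is constant. Hence $\gamma$ stays in the fiber, and $K\cap(\Lambda^A\times r_0)$ is connected. (In fact it is geodesic.)

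\emph{Fiber-connected $\Rightarrow$ geodesic.} I would induct on $\#E$. The case $\#E=1$ holds because a closed connected subset of a real tree is a subtree, and a subtree contains the segment between any two of its points. Now let $\#E\ge 2$. Every proper fiber $\Lambda^A\times r_0$ is isometric to $\Lambda^A$, and $K\cap(\Lambda^A\times r_0)$ is closed and fiber-connected there. By induction all these intersections are geodesic. By Proposition~\ref{fiber-isometric}((iii)$\Rightarrow$(ii)), it therefore suffices to show that $K$ itself is geodesic. Since $K$ is closed in the boundedly compact space $\Lambda^E$, it is complete, so by Menger's theorem it is enough to establish Menger convexity. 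Concretely, for distinct $x,y\in K$ I want some $z\in K-\{x,y\}$ with $z_f\in[x_f,y_f]$ for every $f\in E$; this is exactly the condition that $d(x,z)+d(z,y)=d(x,y)$ in the $\ell_1$-product of trees.

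\emph{Constructing $z$.} If $x$ and $y$ agree in some coordinate, they lie in a common proper fiber, and that fiber is geodesic by induction, so we are done. Otherwise fix $e$ with $x_e\neq y_e$. Using the hypothesis that $[x_e,y_e]$ contains only finitely many branching points, choose an interior point $p$ of $[x_e,y_e]$ that is not a branching point, with $p$ close to $x_e$. Then $x_e$ and $y_e$ lie in different components of $\Lambda-\{p\}$. Since $K$ is connected (it is its own $E$-fiber), a path in $K$ from $x$ to $y$ must meet the slice $K_p=\{r\in K:\ r_e=p\}$. This slice is nonempty, compact, and is the intersection of $K$ with a proper fiber, so it is geodesic by induction. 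Take $z\in K_p$ minimizing $\phi(z)=d(x,z)+d(z,y)$. If $\phi(z)=d(x,y)$ we are done.

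\emph{Main obstacle.} The remaining step is to show that $\phi(z)>d(x,y)$ is impossible, i.e.\ that no coordinate $z_f$ ($f\ne e$) can lie off $[x_f,y_f]$. I would argue by contradiction, in analogy with the proof of Lemma~\ref{weakconv->signconv} and the sign-convexity arguments of Section~\ref{weak-sign-convexity}. Suppose $z_f\notin[x_f,y_f]$, and let $g$ be the gate of $z_f$ on the segment $[x_f,y_f]$. Then $g$ separates $z_f$ from both $x_f$ and $y_f$ in $\Lambda$. Choosing a non-branching point $q$ just beyond $g$ on the side of $z_f$, I would use the $\{f\}$-fibers and $\{e,f\}$-fibers through points of $K$ near $z$, together with the geodesic slice $K_p$, to move $z$ within $K_p$ towards $g$ and so strictly decrease $\phi$. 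This contradicts minimality. Making this rigorous is the delicate point: one has to control how the connected $\{e,f\}$-fiber intersections meet the two "walls" $\{r_e=p\}$ and $\{r_f=q\}$, and the finiteness of branching points is needed to make all local pictures look like products of intervals, where the $\ell_1$ halfspace argument applies.
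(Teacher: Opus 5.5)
\textbf{There is a genuine gap.} Your forward direction is fine, and so is the set-up of the converse: induction on $\#E$, completeness plus Menger convexity, geodesicity of the proper fibers, and $K_p\neq\varnothing$. (Connectedness of $K$ gives no path, but the set $\{r\in K:\ r_e=p\}$ separates $x$ from $y$ in $K$, which suffices.) The problem is that the whole content of the proposition sits in the step you call the main obstacle, and that step is not proved.

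Since $z_e=p\in[x_e,y_e]$, one has $\phi(z)-d(x,y)=2\sum_{f\neq e}d_\Lambda(z_f,[x_f,y_f])$. So what you need is exactly that $K_p$ meets the box $\prod_f[x_f,y_f]$. The ingredients your sketch invokes cannot give this. In $\mathbb{R}^2$ take $K=\{(0,0),(2,2),(p,5)\}$ with $0<p<2$, $x=(0,0)$, $y=(2,2)$, and $e$ the first coordinate. Then:
\begin{itemize}
\item every proper fiber meets $K$ in at most one point, so it is geodesic;
\item $K_p=\{(p,5)\}\neq\varnothing$, and its only point has $\phi=10>4=d(x,y)$;
\item near $z=(p,5)$ there is nothing to move along;
\item for $\#E=2$ the $\{e,f\}$-fibers you want to use are $K$ itself, about which the induction hypothesis says nothing.
\end{itemize}
What excludes such configurations is the connectedness of $K$ as a whole, which you use only to get $K_p\neq\varnothing$. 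The proposed local move of a minimizer towards the gate $g$ has no mechanism behind it. The appeal to Lemma~\ref{weakconv->signconv} is circular, since that argument starts from a geodesic in $K$. The finiteness of branching points plays no identifiable role in your plan.

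The paper's proof instead uses connectedness of $K$ through a separation argument. Suppose $[r_1,r_2]_K=\{r_1,r_2\}$; by induction $r_1$ and $r_2$ differ in every coordinate. Let $\Lambda^x_+$ be the component of $\Lambda^x-\{r_1(x)\}$ containing $r_2(x)$, and put $K_+=K\cap\prod_{x\in E}\Lambda^x_+$. This set is open in $K$, contains $r_2$ and misses $r_1$. It is also closed in $K$, for two reasons:
\begin{itemize}
\item A limit point $r\in K-(K_+\cup\{r_1\})$ would lie in $\prod_x(\Lambda^x_+\cup\{r_1(x)\})$ and agree with $r_1$ in some coordinate. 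So the geodesic from $r_1$ to $r$ inside a proper fiber, given by induction, initially runs inside the box $[r_1,r_2]$. This produces a forbidden point of $[r_1,r_2]_K$.
\item $r_1$ itself is not a limit point. Only finitely many branching points lie on $[r_1(x),r_2(x)]$, so every point of $\Lambda^x_+$ close enough to $r_1(x)$ lies on that segment. Hence points of $K_+$ near $r_1$ would lie in $[r_1,r_2]_K$.
\end{itemize}
A nonempty proper clopen subset contradicts the connectedness of $K$. Your slice-and-minimize scheme would need a substitute for exactly these two steps.
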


\begin{proof} By Proposition \ref{fiber-isometric}, any closed geodesic subspace of $\Lambda^E$ is fiber-geodesic and hence fiber-connected. Conversely
assume that $K$ is a fiber-connected closed subspace of $\Lambda^E$.  Since $({\Lambda}^E,d)$ is  complete and $K$ is closed, $(K,d|_{K})$ is also
a complete metric space. Thus to prove that $K$ is a geodesic subspace of ${\Lambda}^E$, it suffices
to establish that $K$ is Menger-convex with respect to $d$. We proceed by induction on $\#E.$ Since any proper fiber ${K}_0$ of $K$
is a closed fiber-connected subset of ${\Lambda}^A\times r_0$ for a proper subset $A$ of $E$ and a map $r_0\in \Lambda^{E-A},$
by induction assumption we can assume that ${K}_0$ is a geodesic subspace of ${\Lambda}^A\times \{ r_0\}$ and $\Lambda^E$. Suppose by way
of contradiction that $K$ is not Menger-convex. In view of the induction hypothesis, we may then suppose that
there exist $r_1, r_2 \in {K}$ with
$$[r_1, r_2]_{K}=K \cap [r_1,r_2]_{\Lambda^E}= \{ r_1, r_2\}$$
such that $r_1(x) \neq r_2(x)$ holds for all $x \in E.$

For any $x\in E$, let $\Lambda^x$ be the $x$th factor (a copy of $\Lambda$) of the product ${\Lambda}^E$. Then $\Lambda^{E- \{ x\}}\times r_1(x)$ is the
$(E- \{ x\})$-fiber of $\Lambda^E$ that contains the point $r_1$ of $K$. By $\Lambda^x_+=\Lambda^x_+(r_1,r_2)$  denote the set of all
points $\lambda$ of the real tree
$\Lambda^x$ for which $r_1(x)$ is not between $\lambda$ and $r_2(x),$ that is,
$$\Lambda^x_+:=\{ \lambda\in \Lambda^x: ~ [\lambda,r_1(x)]_{\Lambda^E}\cap [r_1(x),r_2(x)]_{\Lambda^E}\ne \{ r_1(x)\}\}.$$
Then $\Lambda^x_+$ is an open subset of the real tree $\Lambda^x$ and its closure $\overline{\Lambda^x_+}=\Lambda^x_+\cup\{ r_1(x)\}$ is a boundedly
compact subtree of $\Lambda^x$. Trivially, the closure of $\Lambda^{E}_+:=\prod_{x\in E}\Lambda^x_+$ equals
$$\overline{\Lambda^E_+}=\prod_{x\in E} (\Lambda_+^x\cup \{ r_1(x)\}).$$
(Note that $\Lambda^E_+$ resp. $\overline{\Lambda^E_+}$ equal the open resp. closed first orthant of ${\mathbb R}^E$ in the case that $\Lambda={\mathbb R},$
$r_1={\mathbf 0}$ and $r_2>{\mathbf 0}$.)  We claim that
$${K}\cap \overline{\Lambda^{E}_+}=({K}\cap \Lambda^{E}_+)\cup \{ r_1(x)\}.$$
Suppose the contrary, that is, suppose that there exists a point $$r\in ({K}- \{ r_1\})\cap \prod_{y\in E}(\Lambda^y_+\cup \{ r_1(y)\} \mbox{ with } r(x)=r_1(x)$$
for some $x\in E$. Then both $r$ and $r_1$ belong to the fiber $\Lambda^{E- \{ x\}}\times r_1(x)$ and hence are connected by a geodesic $\gamma$ in $K$, according to the induction hypothesis. On the other hand, as $\Lambda$ is a real tree, there exists a point $s$ of $\Lambda^E$ (the median point of $r_1,r,s$) such that
$$[r_1,s]_{\Lambda^E}=[r_1,r_2]_{\Lambda^E}\cap [r_1,r]_{\Lambda^E}.$$
The complement $A=\{ y\in E: ~ r_1(y)\ne r(y)\}$ of the equalizer of $r_1$ and $r$ does not contain the element $x$. Then the geodesic $\gamma$ between $r_1$
and $r$ is included in the $A$-fiber at $r_1$ (and $r$). By the initial hypothesis, $r(y)\in \Lambda^y_+$ and hence $r_1(y)\ne s(y)$ for each $y\in A$. Let
$\epsilon$ be the minimum of the distances of $r_1(y)$ and $s(y)$ for $y\in A$ in the copies of the real tree $\Lambda$. Then the intersection of $\gamma$
with the closed ball of radius $\epsilon$ centered at $r_1$ is included in the box
$$[r_1|_{A},s|_{A}]_{\Lambda^{A}}\times r_1|_{E- A}\subseteq [r_1,r_2]_{\Lambda^E},$$
whence
$$\{ r_1\}\varsubsetneqq\gamma\cap [r_1,r_2]_{\Lambda^E}\subseteq [r_1,r_2]_{K},$$
contrary to the assumption that $[r_1,r_2]_{K}=\{ r_1,r_2\}.$

The intersection ${K}_+:={K}\cap {\Lambda}^E_+$ is an open set in the (topological) subspace $K$ of $\Lambda^E$ that includes $r_2$ but not $r_1$.
We wish to show that ${K}_+$ is closed as well. By what has just been shown,
$$\overline{{K}_+}\subseteq {K}_+\cup \{ r_1\}\subseteq {K}.$$
So suppose that $r_1\in \overline{K_+}.$ Then there exists a sequence $(s_n)$ in $K_+$ converging to $r_1.$ The sequence $(t_n)$ of median points $t_n$ of
$r_1,r_2,$ and $s_n$ $(n\in {\mathbb N})$ also converges to $r_1.$ Since $(s_n)$ is fully included in ${\Lambda}^E_+,$ so is $(t_n).$ It follows from $[r_1,r_2]_{K}=\{ r_1,r_2\}$
that either $t_n(x)=r_2(x)$ or $t_n(x)$ is a branching point of $\Lambda^x$ for each coordinate $x$ and index $n$. This, however, conflicts with the initial requirement
on $\Lambda$ that all segments of $\Lambda$ contain only finitely many branching points. We conclude that $\overline{K_+}={K}_+$ as asserted. Therefore $K_+$ is
both open and closed in $K$, which finally contradicts connectedness of $K$, and the proof is complete.
\end{proof}

A standard example shows that the finiteness condition on branching points in segments cannot be dropped in Proposition \ref{fiber-connected}. Consider the
compact tree $\Lambda$ shown in Figure \ref{figure1} in two copies $\Lambda^x$ and $\Lambda^y$. It connects three vertices of a right-angled triangle with
legs of length 1 where the sequence of leaves is located on the hypotenuse and  converges to the intersection point $\mu$ of the hypothenuse and one leg.
The branching points of $\Lambda$ lie on that leg and converge also to $\mu$.  The total length of $\Lambda$ equals 3. A path
$\gamma=\gamma_0\cup \gamma_1\cup \gamma_2\cup\ldots\cup \{ (\mu^x,\mu^y)\}$ between two points $(\lambda^x,\lambda^y)$ and $(\mu^x,\mu^y)$ in $\Lambda^x\times\Lambda^y$
can be constructed by alternating $x$-fibers $\gamma_0,\gamma_2,\ldots$ and $y$-fibers $\gamma_1,\gamma_3,\ldots,$ with $(\mu^x,\mu^y)$ as limiting point; see Figure 1
for the two projections of $\gamma$ on $\Lambda^x$ and $\Lambda^y.$ Then $\gamma$ intersects the segment between $(\lambda^x,\lambda^y)$ and $(\mu^x,\mu^y)$ only in its
two end points. Thus, the closed subset $\gamma$ of $\Lambda^x\times\Lambda^y$ is fiber-connected but not Menger-convex and hence does not constitute a geodesic subspace.

\begin{figure}[t]
\vspace*{-3cm}
\centering\includegraphics[scale=0.45]{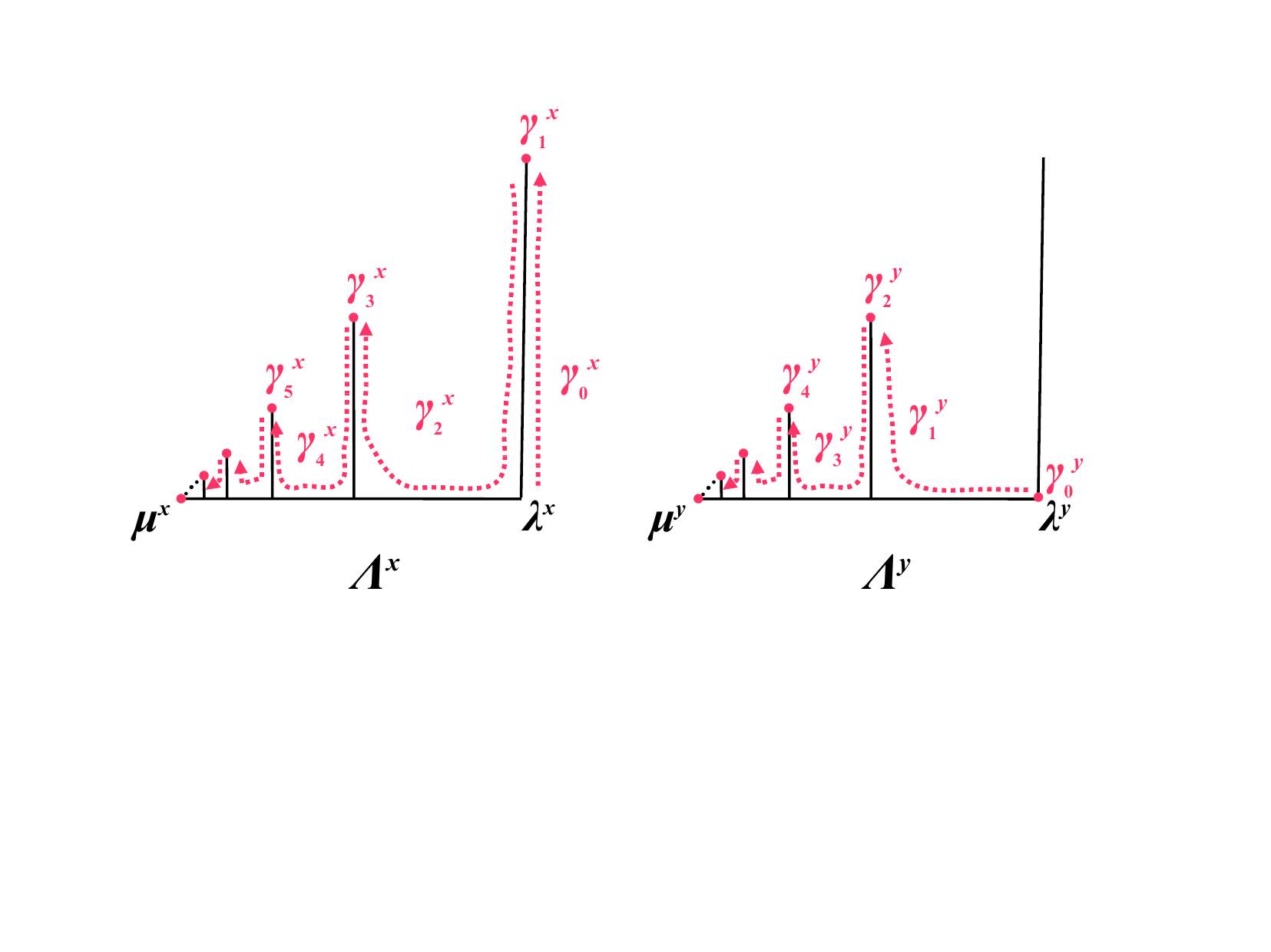}
\vspace*{-3cm}
\caption{Projection of a geodesic $\gamma$ in $\Lambda^x\times\Lambda^y$ to the factors}
\label{figure1}
\end{figure}

\section{Weak and sign convexity of cubihedra}\label{metric-characterization}

In this section we will characterize the ample sets via their cubihedra. We show that they are exactly the subsets of $\{ \pm 1\}^E$ whose cubihedra are weakly convex or sign-convex. We will also characterize the subsets of $\{\pm 1, 0\}^E$ whose upward closures are the barycentric completions of ample sets as. Namely, we show that they are exactly  the 0-convex subsets of $\{\pm 1, 0\}^E$. We also provide other metric characterizations of cubihedra  and barycentric completions of ample sets.

For a connected subset $\covectors$ of $\{ \pm 1\}^E$, its geometric realization $|\covectors|$ within the hypercube ${\HH}(E)$
(endowed with the $\ell_1$-metric $d$) admits an intrinsic path metric $d_{|\covectors|}=\delta^{|\covectors|,d}$.  Indeed, any pair of points in $|\covectors|$ can be connected by a rectifiable path in $|\covectors|$ relative to $d,$ whence $d_{|\covectors|}$ exists by
virtue of Lemma \ref{delta}. Although $(|\covectors|,d_{|\covectors}|)$ is a geodesic space in its own right, it is not necessarily a metric subspace of
$({\mathbb R}^E,d),$ even when $\covectors$ is isometric. 
The first theorem proved in this section shows that a set of the form $K=|\covectors|$ is a metric subspace of $({\mathbb R}^E,d)$ (i.e., it is weakly convex) if and only if $L$ is ample.

\begin{theorem} \label{metric-char} For a subset $\covectors$ of $\{ \pm 1\}^E,$ the following
statements are equivalent:

\begin{itemize}
\item[\rm(i)] ${\covectors}$ is ample;
\item[\rm(ii)] $|{\covectors}|$ is weakly convex;
\item[\rm(iii)]  $|\covectors|$ is a sign-convex;
\item[\rm(iv)]  $\Baryc(\covectors)$ is sign-convex;
\item[\rm(v)] $\covectors$ is isometric  and every face of $|{\covectors}|$ is gated in
$(|\covectors|,d_{|\covectors}|);$
\item[\rm(vi)] the restrictions of the intrinsic metric $d_{|{\covectors}|}$ and the $\ell_1$-metric  $d$ on $\Baryc(\covectors)$ coincide;
\item[\rm(vii)] the barycenters $t_1$ and $t_2$ of any two parallel faces of $|{\covectors}|$ have the same distance with respect to the intrinsic metric and $\ell_1$-metric:
$$d_{|\covectors|}(t_1,t_2)=d(t_1,t_2).$$
\end{itemize}
\end{theorem}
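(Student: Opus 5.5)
We plan to prove the chain (i)$\Rightarrow$(ii)$\Rightarrow$(iii)$\Leftrightarrow$(iv)$\Rightarrow$(i), and then attach (v), (vi), (vii) via (ii)$\Rightarrow$(v)$\Rightarrow$(vi)$\Rightarrow$(vii)$\Rightarrow$(i).

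\emph{The cheap steps.} (ii)$\Rightarrow$(iii) is Lemma \ref{weakconv->signconv}. For (iii)$\Leftrightarrow$(iv) we apply Lemma \ref{sign-convex} with $K=|\covectors|$. Every point $r\in|\covectors|$ lies in the relative interior of its face $F(r)$, which belongs to $|\covectors|$. Moreover, $\sign(r)$ records only the zero/sign pattern, so we need an identification of $\hcovectors(|\covectors|)$ with $\Baryc(\covectors)$, or at least a sign-pattern-preserving correspondence. The subtlety is that $\sign(r)$ is not the barycenter of $F(r)$, since coordinates in $(-1,1)\setminus\{0\}$ give $\pm1$. We resolve this as follows. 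A point of $[-1,1]^E$ with sign vector $t$ lies in $|\covectors|$ if and only if some face containing it is in $|\covectors|$. Taking the point with coordinates $\pm1/2$ on $\supp(t)$ and $0$ elsewhere, $F$ of it has barycenter $t$. Hence $J(|\covectors|)=\Baryc(\covectors)$ (using upward closure, Lemma \ref{upper-set-baryc}), and the equivalence follows.

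\emph{(iv)$\Rightarrow$(i).} Since $\Baryc(\covectors)$ is upward closed, Proposition \ref{sign-convex-upper-closed} gives condition (i$'$). We use it to verify superconnectivity (2) via induction on $\#E$ using connected recursivity (4). First we show $\covectors$ is isometric. For $s',s''\in\covectors$ with $s'(e)\ne s''(e)$, (i$'$) produces $t_0\in[s',s'']\cap\Baryc(\covectors)$ with $t_0(e)=0$ and $t_0=s'=s''$ on the common coordinates. The edge $F(t_0)$-vertex agreeing with $s'$ at $e$ lies in $\covectors\cap[s',s'']$, and iterating gives a geodesic. Next, $\Baryc(\covectors^e)$ is the set of maps $t|_{E-e}$ with $t(e)=0$, $t\in\Baryc(\covectors)$. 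This set inherits (i$'$) by applying (i$'$) to the lifts with $t(e)=0$, since $t_0(e)=0$ is forced because $t_1(e)t_2(e)=0$. By induction $\covectors^e$ is ample, so (4) yields (i).

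\emph{(i)$\Rightarrow$(ii).} This is the main obstacle. We invoke Proposition \ref{fiber-connected} with $\Lambda=[-1,1]$, a real tree with no branching points, so it suffices that $|\covectors|$ is fiber-connected. Intersect $|\covectors|$ with an $A$-fiber $\{p: p|_{E-A}=c\}$, and let $B\subseteq E-A$ be the coordinates where $c=\pm1$. The intersection is then the realization of an ample set of the form $((\covectors)$ restricted to the face fixed by $c|_B)^{(E-A-B)}$. Concretely, the intersection is homeomorphic to $|(\covectors\cap F)^{C}|$ with $C=E-A-B$, because a point with interior coordinates on $C$ is in $|\covectors|$ iff the whole $C$-cube over its $A$-part is. Ampleness is preserved by restriction to faces and by the operator $^C$, so this set is ample, hence connected by (2). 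A connected cube complex is path-connected, which gives fiber-connectedness. Closedness of $|\covectors|$ is clear. The care needed is in checking this identification of fiber sections with realizations of $\covectors^C$ restricted to a face.

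\emph{The remaining conditions.} For (ii)$\Rightarrow$(v): isometricity holds because $d_{|\covectors|}=d$ and the $\ell_1$-gate of a face, which is convex in $\mathbb R^E$, serves as gate since distances agree. (v)$\Rightarrow$(vi) is obtained by induction, comparing distances through gates of faces. (vi)$\Rightarrow$(vii) is trivial. For (vii)$\Rightarrow$(i), we take parallel faces $\HH(A)\times s_0',\HH(A)\times s_0''$. Equality of distances implies a path realizing $\ell_1$ distance through the $A$-fibers, which forces $\covectors^A$ to be isometric, giving superisometry (1).
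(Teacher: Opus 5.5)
Your plan is correct. Its skeleton differs from the paper's at one essential point: how you return from sign-convexity to ampleness.

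\textbf{How the routes differ.} The paper never goes from (iv) straight back to (i). It proves (iv)$\Rightarrow$(vi) by induction on $d(t_1,t_2)$, splitting a pair with $t_1(e)t_2(e)=-1$ at the element $t_0\in[t_1,t_2]\cap\Baryc(L)$ given by condition (i$'$) of Proposition~\ref{sign-convex-upper-closed}. It then returns to (i) only once, via (vii)$\Rightarrow$(i): it projects an intrinsic geodesic between barycenters of parallel $A$-faces onto $|L|^A$ to get superisometry.

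You close the first cycle combinatorially instead. The set $\Baryc(L^e)=\{t|_{E-\{e\}}: t\in\Baryc(L),\ t(e)=0\}$ inherits (i$'$), since the new $t_0$ is forced to have $t_0(e)=0$. Induction on $\#E$ then makes every $L^e$ ample, (i$'$) gives connectivity (indeed isometricity) of $L$, and connected recursivity (4) finishes.

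What each route buys:
\begin{itemize}
\item Your argument is sound and purely combinatorial, in the spirit of Remark~1. It separates the equivalence of (i)--(iv) from any geodesic or compactness argument.
\item The price is that you need a second return implication, (vii)$\Rightarrow$(i), to attach (v)--(vii).
\item The paper obtains the metric statement (vi) directly from (iv) and uses a single return.
\end{itemize}
Your (i)$\Rightarrow$(ii) is the paper's argument (your $C$ is the paper's $B$), and so is your (vii)$\Rightarrow$(i).

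\textbf{Local repairs.}
\begin{itemize}
\item \emph{(iii)$\Leftrightarrow$(iv).} The point with coordinates $\pm 1/2$ on $\supp(t)$ and $0$ elsewhere has $F=\HH(E)$, whose barycenter is the zero map, not $t$. Argue $J(|L|)=\Baryc(L)$ as follows. Any $t\in\Baryc(L)$ is itself a point of $F(t)\subseteq |L|$ with $\sign(t)=t$. Conversely, for $r\in|L|$ the barycenter of $F(r)$ is $\prec \sign(r)$, so $\sign(r)\in\Baryc(L)$ by upward closedness.
\item \emph{Isometricity of $L$ from (i$'$).} The vertex of $F(t_0)$ ``agreeing with $s'$ at $e$'' may be $s'$ itself. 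Instead take any vertex of $F(t_0)$ other than $s'$ and $s''$; one exists unless $s',s''$ are adjacent. Then induct on $d(s',s'')$.
\item \emph{(v)$\Rightarrow$(vi).} This step is only gestured at, and no induction is needed. Isometricity of $L$ gives $d_{|L|}(t_1,r)=d(t_1,r)$ for every vertex $r$: go inside $F(t_1)$ to the vertex $q$ equal to $r$ on the free coordinates, then follow a graph geodesic. Comparing $d(t_1,r)=d_{|L|}(t_1,g)+d(g,r)$ with $d(t_1,r)=d(t_1,g_0)+d(g_0,r)$ over all vertices $r$ of $F(t_2)$ shows that $d(g,r)-d(g_0,r)$ is independent of $r$. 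Here $g$ is the intrinsic gate and $g_0$ the $\ell_1$-gate. Flipping one coordinate of $r$ then forces $g=g_0$, and hence $d_{|L|}(t_1,t_2)=d(t_1,t_2)$. Alternatively, take (ii)$\Rightarrow$(vi) directly and prove (v)$\Rightarrow$(vii) for parallel faces, as the paper does.
\item \emph{(ii)$\Rightarrow$(v).} Graph isometricity of $L$ does not follow from $d_{|L|}=d$ without an argument. Simply take it from (i), which your first cycle already provides.
\end{itemize}
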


\begin{proof} To prove the equivalence of the conditions from (i) to (vii), we establish three chains of implications $\rm(i)\Rightarrow \rm(ii)\Rightarrow \rm(iii)\Rightarrow \rm(iv)\Rightarrow \rm(vi)\Rightarrow \rm(vii)$,  $\rm(ii)\Rightarrow \rm(v)\Rightarrow \rm(vii)$ and $\rm{(vii)}\Rightarrow \rm(i)$.

To establish (i)$\Rightarrow$(ii), we only need to show that $|\covectors|$ is fiber-connected, by virtue of Propositions  \ref{fiber-isometric} and \ref{fiber-connected}. A fiber $\FF$ of $|\covectors|$ is the intersection of $|\covectors|$ with some $A$-plane, say,
$${\FF}= |\covectors|\cap ({\mathbb R}^A \times r|_{E-A}) \mbox{ for some } r\in |\covectors| \mbox{ and } A\subseteq E.$$
Note that the smallest face of $|\covectors|_A$ containing $r|_{E-A}$ has the form
$$F(r|_{E-A})=\HH(B) \times r|_{E-A-B}, \mbox{ where }$$
$$B=E(r)-A=\{ e\in E-A: ~-1<r(e)<+1\}.$$
Thus, the smallest face of $|\covectors|$ containing any point of $\FF$ has the hypercube $\HH(B)$  as its factor:
$$\HH(B) \times {\FF}_B\subseteq |\covectors|.$$
This can also be expressed by saying that ${\FF}_B=\{ q|_{E-B}: ~q\in {\FF}\}$ is the $A$-fiber of $|\covectors^B|$ containing $r|_{E-B}.$ Since the positions at
which the points of ${\FF}_B$ have their coordinates properly between -1 and +1 all belong to $A,$ we infer that ${\FF}_B$ equals the geometric realization
of the $A$-fiber of $\covectors^B$ at $r|_{E-B}.$ Since $\covectors^B$ and its fibers are ample and hence connected by \cite[Theorem 3]{BaChDrKo}, we conclude that ${\FF}_B$
is connected and so is $\FF$. This shows that  (i)$\Rightarrow$(ii).

The implications  (ii)$\Rightarrow$(iii)$\Rightarrow$(iv) follow  from Lemmas \ref{weakconv->signconv}  and \ref{sign-convex}. Next we show that (iv)$\Rightarrow$(vi) holds. Pick any $t_1,t_2\in \Baryc(\covectors)$. We prove the equality $d_{|L|}(t_1,t_2)=d(t_1,t_2)$ by induction on $d(t_1,t_2)$. First suppose that there exists $e\in E$ such that $t_1(e)t_2(e)=-1$. By Lemma \ref{upper-set-baryc}, $\Baryc(\covectors)$ is upward closed. Therefore, by Proposition \ref{sign-convex-upper-closed}  there exists some $t_0\in [t_1,t_2]\cap \Baryc(\covectors)$ with $t_0(e)=0$ and $t_0(f)\in \{ t_1(f),t_2(f)\}$ for any $f\in E$ such that $t_1(f)t_2(f)\ge 0$. Since $t_0(e)=0$, $t_0$ is different from $t_1$ and $t_2$. Since $t_0\in [t_1,t_2]$, we have $d(t_1,t_0)<d(t_1,t_2)$ and $d(t_0,t_2)<d(t_1,t_2)$. By induction hypothesis, $d_{|L|}(t_1,t_0)=d(t_1,t_0)$ and $d_{|L|}(t_0,t_2)=d(t_0,t_2)$. Therefore $|L|$ contains a path of length $d(t_1,t_0)$ joining $t_1$ and $t_0$ and a path of length $d(t_0,t_2)$ joining $t_0$ and $t_2$. The union of these paths is a path of length $d(t_1,t_2)=d(t_1,t_0)+d(t_0,t_2)$ between $t_1$ and $t_2$, showing that $d_{|L|}(t_1,t_2)=d(t_1,t_2)$. This concludes the proof of the implication  (iv)$\Rightarrow$(vi).  The implication 
(vi)$\Rightarrow$(vii) is trivial.

We continue with the implications $\rm(ii)\Rightarrow \rm(v)\Rightarrow \rm(vii)$. To prove (ii)$\Rightarrow$(v),  for a face $\FF$ of $|\covectors|$ and a point $r\in |\covectors|,$ take the gate $r'$ of $r$ in $\FF$ relative to the $\ell_1$-metric $d$ of the hypercube $\HH(E)$.  Since $r'\in {\FF}\subseteq |\covectors|$ and $|\covectors|$ is path-$\ell_1$-isometric, we have $d_{|\covectors|}(r,r')=d(r,r'),$ whence $r'$ also serves as the corresponding gate within the cubihedron $|\covectors|.$

Next we show that (v)$\Rightarrow$(vii) holds. If $t_1$ and $t_2$ are the barycenters of two parallel faces, then these faces are $F(t_1)$ and $F(t_2)$ having the dimension $k$,
say. Then they lie on parallel $A$-planes for some subset $A\subseteq E$ with $\#A=k.$ Let $q$ be any vertex of $F(t_1)$ (necessarily belonging to $\covectors$) and $r$ be the
corresponding vertex from $F(t_2)$ (and $\covectors$), thus satisfying $q|_A=r|_A.$ Since $\covectors$ is isometric and $t_1$ is the barycenter of $F(t_1)$, we have
\begin{align*} d_{|\covectors|}(t_1,r)&\le d_{|\covectors|}(t_1,q)+d_{|\covectors|}(q,r)\\
&=d(t_1,q)+d(q,r)=d(t_1,r),
\end{align*}
whence equality holds. Therefore the gate of $t_1$ in $F(t_2)$ must have distance $k/2$ to all vertices of $F(t_2)$. The unique point in $F(t_2)$ with this property is
the barycenter $t_2$. Consequently, $t_2$ is the gate of $t_1$ in $F(t_2)$ relative to the intrinsic metric $d_{|\covectors|}$. In particular, $d_{|\covectors|}(t_1,t_2)=d(q,r)=d(t_1,t_2),$ as required. This establishes (v)$\Rightarrow$(vii).

Finally, we establish the implication (vii)$\Rightarrow$(i). Connect the barycenters $t_1$ and $t_2$ of two parallel $A$-faces $F(t_1)$ and $F(t_2)$ by a geodesic $\gamma$ in $|\covectors|.$ Then every point $r$ of $\gamma$ has the same projection on $\HH(A)$  as $t_1$ and $t_2$. Therefore $F(r)$ has a $A$-cube as a factor, and consequently, the geodesic $\gamma$ projects onto a geodesic
$\gamma^A$ of $|\covectors|^A$ which connects the vertices $t_1|_{E-A}$ and $t_2|_{E-A}$ of $\covectors^A$.  To show that $t_1|_{E-A}$ and $t_2|_{E-A}$ are at distance $d(t_1,t_2)$ in $\covectors^A$, we use induction on the $\ell_1$-distance between $t_1|_{E-A}$ and $t_2|_{E-A}.$ Let $r$ be the point of $\gamma$ at distance 2 from $t_1$. Then $F(r)$ is some $(A\cup B)$-cube within $\covectors$ with $\varnothing\ne B\subseteq E-A,$ which necessarily includes $F(t_1)$ as a face. Then every point in $F(r|_{E-A})$ belongs to $|\covectors|^A.$ In particular, the neighbor
$s'$ of $r|_{E-A}$ with $r|_{E-A-\{ e\}}=s'|_{E-A-\{ e\}}$ and $s'(e)=-r(e)$ for some $e\in B$ also belongs to $|\covectors|^A$ and is between $t_1|_{E-A}$ and $t_2|_{E-A}.$ By virtue of the induction hypothesis $s'$ and $t_2|_{E-A}$ are at distance $d(s',t_2|_{E-A})=d(u,v)-2.$ Therefore $\covectors^A$ is isometric and consequently $\covectors$ is ample.
\end{proof}

The second theorem of this section characterizes the subsets $\hcovectors$ of $\{ \pm 1, 0\}^E$ whose upward closures  $\uparrow\hspace*{-0.1cm}\hcovectors$ restricted to $\{ \pm 1\}^E$ are ample. We prove that they are exactly the 0-convex subsets of $\{ \pm 1, 0\}^E$ (i.e., the subsets satisfying (SCA)).

\begin{theorem} \label{barycenters-ample} For a subset $\hcovectors$ of $\{ \pm 1,0\}^E$ and $\covectors:=\uparrow\hspace*{-0.1cm}\hcovectors\cap \{ \pm 1\}^E$, the following statements are equivalent:
\begin{itemize}
\item[\rm(i)] $[\hcovectors]$ is weakly convex;
\item[\rm(ii)] $\uparrow\hspace*{-0.1cm}\hcovectors$ is sign-convex;
\item[\rm(iii)] $\uparrow\hspace*{-0.1cm}\hcovectors$ is 0-convex;
\item[\rm(iv)] $\hcovectors$ is 0-convex;
\item[\rm(v)] $\uparrow\hspace*{-0.1cm}\hcovectors$ is an isometric subset of the grid graph $G(\{ \pm 1,0\}^E)$;
\item[\rm(vi)] $\covectors$ is ample such that $[\hcovectors]=|\covectors|$
and  $\uparrow\hspace*{-0.1cm}\hcovectors=\Baryc(\covectors)$.
\end{itemize}
\end{theorem}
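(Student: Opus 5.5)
The first observation is that $[\hcovectors]=[\uparrow\hspace*{-0.1cm}\hcovectors]$ and, by the first Lemma \ref{upper-set-baryc}, $\uparrow\hspace*{-0.1cm}\hcovectors=\Baryc(\covectors)$ for $\covectors=\uparrow\hspace*{-0.1cm}\hcovectors\cap\{\pm 1\}^E=L(\hcovectors)$. Moreover, $[\uparrow\hspace*{-0.1cm}\hcovectors]$ is the union of all faces $F(t)$ with $t\in\Baryc(\covectors)$, which is exactly $|\covectors|$. So the second half of (vi), namely $[\hcovectors]=|\covectors|$ and $\uparrow\hspace*{-0.1cm}\hcovectors=\Baryc(\covectors)$, holds for every $\hcovectors$. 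Hence (vi) is simply ``$\covectors$ is ample'', and (i) is equivalent to ``$|\covectors|$ is weakly convex''.

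With this reduction, Theorem \ref{metric-char} applied to $\covectors$ gives (vi)$\Leftrightarrow$(i), since (i)$\Leftrightarrow$(ii) there, and (vi)$\Leftrightarrow$(ii), since (i)$\Leftrightarrow$(iv) there. Proposition \ref{sign-convex-upper-closed} applies because $\uparrow\hspace*{-0.1cm}\hcovectors$ is upward closed, and it gives (ii)$\Leftrightarrow$(iii). Lemma \ref{SCA-upper-closure} gives (iii)$\Leftrightarrow$(iv). The final assertion of Proposition \ref{sign-convex-upper-closed} gives (ii)$\Rightarrow$(v). It remains to prove (v)$\Rightarrow$(ii) (or (v)$\Rightarrow$(iii)).

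For (v)$\Rightarrow$(iii), take $t_1,t_2\in\uparrow\hspace*{-0.1cm}\hcovectors$ and $e$ with $t_1(e)t_2(e)=-1$. By (v) there is a shortest path in $G(\uparrow\hspace*{-0.1cm}\hcovectors)$ from $t_1$ to $t_2$ of length equal to their grid distance. In the grid $G(\{\pm 1,0\}^E)$, a shortest path changes each coordinate monotonically along the path $-1,0,+1$ of the factor. Because $t_1(e)=-1$ and $t_2(e)=+1$, the path must pass through a vertex $t$ with $t(e)=0$. For every other $f$, monotonicity keeps $t(f)$ in the factor interval between $t_1(f)$ and $t_2(f)$, so $t(f)\in\{0,t_1(f),t_2(f)\}$. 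Such a $t$ lies in $\uparrow\hspace*{-0.1cm}\hcovectors$, which is exactly the 0-convexity witness required; in fact it lies in $[t_1,t_2]$, so it also gives (ii$'$). The only care needed here is to check that vertices on grid geodesics lie coordinatewise in the interval of the path $-1,0,+1$.

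The main obstacle I expect is the reduction step showing $[\hcovectors]=|\covectors|$ rather than mere inclusion. The key point is that every face of $|\covectors|$ is $F(t)$ for some $t\in\Baryc(\covectors)=\uparrow\hspace*{-0.1cm}\hcovectors$, and each such $F(t)$ is contained in $F(t_0)$ for some $t_0\in\hcovectors$ with $t_0\prec t$. Once this is in place, everything else is a direct appeal to earlier results.
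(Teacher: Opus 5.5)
\textbf{There is a genuine gap.} The reduction in your first paragraph is false, and every link you draw from (i) and (vi) to the other conditions depends on it. In general only the inclusions $\uparrow\hspace*{-0.1cm}J\subseteq\Baryc(L)$ and $[J]\subseteq|L|$ hold. A face all of whose vertices lie in $\uparrow\hspace*{-0.1cm}J$ need not lie inside a single $F(t)$ with $t\in J$.

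Take $E=\{1,2\}$ and $J=\{(\pm1,0),(0,\pm1)\}$. Then $L=\{\pm1\}^2$ is ample and $|L|$ is the solid square. However:
\begin{itemize}
\item $(0,0)\in\Baryc(L)$ but $(0,0)\notin\uparrow\hspace*{-0.1cm}J$;
\item $[J]$ is the boundary $4$-cycle, which is not weakly convex: $(1,0)$ and $(-1,0)$ have $\ell_1$-distance $2$ but intrinsic distance $4$;
\item $J$ violates (SCA) for $(1,0),(-1,0)$ and $e=1$.
\end{itemize}
So under your reading, (vi) holds here although (i)--(v) all fail.

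The identity you invoke, $\Baryc(\uparrow\hspace*{-0.1cm}J\cap\{\pm1\}^E)=\uparrow\hspace*{-0.1cm}J$, is misstated in the paper, and its proof makes exactly this slip. The paper's proof of this theorem does not use it, which is why (vi) lists $[J]=|L|$ and $\uparrow\hspace*{-0.1cm}J=\Baryc(L)$ as real conditions. Your ``main obstacle'' argument is circular: it assumes $\Baryc(L)=\uparrow\hspace*{-0.1cm}J$ in order to get $[J]=|L|$. Consequently the equivalences (vi)$\Leftrightarrow$(i) and (vi)$\Leftrightarrow$(ii) obtained via Theorem \ref{metric-char} are unsupported.

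The remaining steps are sound. (ii)$\Leftrightarrow$(iii)$\Leftrightarrow$(iv) and (ii)$\Rightarrow$(v) follow the paper. Your direct proof of (v)$\Rightarrow$(iii), using monotonicity of grid geodesics, is correct and shorter than the paper's detour (v)$\Rightarrow$(vi)$\Rightarrow$(i)$\Rightarrow$(ii).

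What is missing is the step the paper carries out inside (v)$\Rightarrow$(vi): deriving $\Baryc(L)\subseteq\uparrow\hspace*{-0.1cm}J$ from the hypothesis. One way is induction on $\#A$ for a face $H(A)\times s_0$ with all vertices in $L$. For $e\in A$, let $t^-,t^+$ be the barycenters of its two subfaces with $e$-coordinate $-1$ and $+1$ respectively. By induction they lie in $\uparrow\hspace*{-0.1cm}J$, and they differ only at $e$. Sign-convexity of $\uparrow\hspace*{-0.1cm}J$ then forces the barycenter of $H(A)\times s_0$ into $\uparrow\hspace*{-0.1cm}J$. Isometricity does the same, since that barycenter is the unique common grid neighbour of $t^-$ and $t^+$. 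The paper packages this as the claim that the smallest isometric subset of the grid containing the $A$-fiber of $\{\pm1\}^E$ at $s$ is the $A$-fiber of $\{\pm1,0\}^E$ at $s$.

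With that in hand, $[J]=[\uparrow\hspace*{-0.1cm}J]=[\Baryc(L)]=|L|$, and Theorem \ref{metric-char}, (iv)$\Rightarrow$(i), gives ampleness of $L$. (The paper instead checks directly that every $L^A$ is isometric.) Two further repairs are needed:
\begin{itemize}
\item (i)$\Rightarrow$(ii) must be done directly, as in the paper: $[J]$ weakly convex implies sign-convex, so the set of sign vectors of points of $[J]$, which equals $\uparrow\hspace*{-0.1cm}J$, is sign-convex.
\item (vi)$\Rightarrow$(i) must use the hypothesis $[J]=|L|$ rather than treat it as automatic.
\end{itemize}
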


\begin{proof}  If $[\hcovectors]$ is weakly convex, then $[\hcovectors]$
is sign-convex by Lemma \ref{weakconv->signconv}. By Lemma \ref{sign-convex}, the set  $\hcovectors([\hcovectors])=\uparrow\hspace*{-0.1cm}\hcovectors$ is sign-convex. This establishes (i)$\Rightarrow$(ii). The equivalence  (ii)$\Longleftrightarrow$(iii) follows from the first assertion of Proposition \ref{sign-convex-upper-closed} and the equivalence (iii)$\Longleftrightarrow$(iv) follows from Lemma \ref{SCA-upper-closure}.
The implication (ii)$\Rightarrow$(v) follows from the second assertion of Proposition \ref{sign-convex-upper-closed}.

For the proof of (v)$\Rightarrow$(vi) we first claim that the smallest isometric subset $\UU$ of the grid graph $G(\{ \pm 1,0\}^E)$ that includes some $A$-fiber of $\{ \pm 1\}^E$ at some vertex $s\in \{ \pm 1\}^E$ is the $A$-fiber of $\{ \pm 1,0\}^E$ at $s$. In fact, as $\{ \pm 1\}^A\times s|_{E-A}\subseteq {\UU}$ and in
each coordinate 0 is needed to connect $-1$ to $+1$, we may assume by induction on $\# A$ that for some $e\in A,$
$$\{ \pm 1,0\}^{A-\{ e\}}\times \{ \pm 1\}^e\times s|_{E-A}\subseteq {\UU},$$
whence $\{ \pm 1,0\}^{A-\{ e\}}\times \{ 0\}^e\times s|_{E-A}$ constitutes the set of unique common neighbors in the grid graph
for the pairs $t_1,t_2$ with $t_1|_{A-\{ e\}}=t_2|_{A-\{ e\}},$ $\{ t_1(e),t_2(e)\}=\{ \pm 1\}$, and $t_1|_{E-A}=s|_{E-A}=t_2|_{E-A}.$ Hence $\{ \pm 1,0\}^A\times s|_{E-A}\subseteq {\UU},$ as asserted.

Since $\uparrow\hspace*{-0.1cm}\hcovectors$ is isometric in $G(\{ \pm 1,0\}^E),$ we can apply the preceding observation to infer that for $\covectors=\uparrow\hspace*{-0.1cm}\hcovectors\cap \{ \pm 1\}^E$ the upward closure $\uparrow\hspace*{-0.1cm}\hcovectors$ encompasses $\mbox{Baryc}(\covectors).$ Since the reverse inclusion is trivial because $\uparrow\hspace*{-0.1cm}\hcovectors$ is  an upward closed set, we have thus established $\uparrow\hspace*{-0.1cm}\hcovectors=\mbox{Baryc}(\covectors),$ whence it follows that
$$[\hcovectors]=[\uparrow\hspace*{-0.1cm}\hcovectors]=[\mbox{Baryc}(\covectors)]=|\covectors|,$$
as required. Isometry of $\uparrow\hspace*{-0.1cm}\hcovectors$ also entails that ${\covectors}^A$ is isometric for every $A\subseteq E.$ Indeed, for each pair $s_1,s_2\in \covectors^A,$ the barycenter maps $t_i$ corresponding to $\{ \pm 1\}^A\times s_i$ $(i=1,2)$ belong to $\uparrow\hspace*{-0.1cm}\hcovectors$ by what has just been observed. Then any isometric path connecting $t_1$ and $t_2$ in $\uparrow\hspace*{-0.1cm}\hcovectors$ projects to an isometric path connecting $s_1$ and $s_2$ in $G(\{ \pm 1,0\}^{E-A})$ because $t_1|_A=t_2|_A$ is the zero map on $A$ and $t_i|_{E-A}=s_i$ for $i=1,2.$ To show that $s_1$ and $s_2$ are actually connected by a shortest path in $G(\{ \pm 1\}^{E-A})$ (which is scale 2 embedded in $G(\{ \pm 1,0\}^{E-A})$), we use a trivial induction on the $\ell_1$-distance between $s_1$ and $s_2$. Let the neighbor $w$ of $s_1$ on the projected path equal $0$ at the coordinate $e\in E-A.$ Then the sign map $s'_1$ with
$s'_1|_{E-A-\{ e\}}=s_1|_{E-A-\{ e\}}=w|_{E-A-\{ e\}}$ and $s'_1(e)=-s_1(e)=s_2(e)$ belongs to $\covectors^A$ because $w\prec s'_1.$ Since $s'_1$ is a neighbor of $s_1$ between $s_1$ and $s_2$ in $G(\{ \pm 1\}^{E-A}),$ we conclude that $\covectors^A$ is isometric. This finally shows that $\covectors$ is ample, finishing the proof of (v)$\Rightarrow$(vi).

Finally, if (vi) holds, then the geometric realization $|\covectors|$ of the ample set $\covectors=\uparrow\hspace*{-0.1cm}\hcovectors\cap \{ \pm 1\}^E$ is a weakly convex subset of $\HH(E)$ according to Theorem \ref{metric-char} and coincides with $[\hcovectors]$ by (vi). This establishes (i), and the proof is complete.
\end{proof}

\noindent
{\bf Remark 1.}
Conditions $\rm{(ii)}-\rm{(vi)}$ of Theorem \ref{barycenters-ample} are purely combinatorial and
their equivalence can be proven without employing the metric/topological features of the entire geometric realization. In fact, our combinatorial proof established $\rm{(ii)}\Longleftrightarrow\rm{(iii)}\Longleftrightarrow\rm{(iv)}$ and $\rm{(ii)}\Rightarrow\rm{(v)}\Rightarrow\rm{(vi)}$.
The implication left, $\rm{(vi)}\Rightarrow\rm{(iii)}$ can be shown directly,
without the (de)tour through cubihedra: Assuming that $\hcovectors\subseteq \{ \pm 1,0\}^E$ satisfies (vi), the ``top'' set $\covectors:=\uparrow\hspace*{-0.1cm}\hcovectors\cap \{ \pm 1\}^E$ of sign maps is ample, and the second part of (vi) guarantees that all barycentric maps of the cubihedron $|\covectors|$ belong to $\uparrow\hspace*{-0.1cm}\hcovectors$. Let $t_1$ and $t_2$ be two members of
$\uparrow\hspace*{-0.1cm}\hcovectors$ with $t_1(e)=-1$ and $t_2(e)=+1$ for some $e\in E$. The zero coordinates of $t_i$ determine the set $A_i\subsetneqq E,$ so that $t_i$ encodes some $A_i$-cube of $\HH^{(1)}(E)$ for $i=1,2.$ The $A_1$-cube and $A_2$-cube admit mutually nearest vertices (gates) $s_1$ and $s_2$ within $\HH^{(1)}(E)$. Then $t_1\prec s_1$ and $t_2\prec s_2.$ By the choice of $e$ and the gate property for $s_1$ and $s_2,$ we have
$$e\in \Delta (s_1,s_2)=\{ f\in E: ~s_1(f)\ne s_2(f)\}\subseteq E-(A_1\cup A_2).$$
Since $\covectors$ is ample, $\covectors^{A_1\cap A_2}$ is isometric, whence there exists a shortest path $P$ in $\covectors$
connecting $s_1$ and $s_2$, which projects onto a shortest path between $s_1|_{A_1\cap A_2}$ and $s_2|_{A_1\cap A_2}$
in $\covectors^{A_1\cap A_2}.$ Necessarily, $P$ passes through two adjacent vertices $s'_1$ and $s'_2$ with $s'_1(e)=-1$ and $s'_2(e)=+1.$
Thus, there exist $A_1\cap A_2$-cubes at $s'_1$ and $s'_2$, which are fibers of a $(A_1\cap A_2)\cup \{ e\}$-cube containing
$s'_1$ and $s'_2$. Let $t_0$ denote the barycenter map of this latter cube. Then
$$
t_0(f):=\begin{cases}
0 &\mbox{ if } f\in (A_1\cap A_2)\cup \{ e\},\\
s_2(f)=t_2(f) &\mbox{ if } f\in \Delta(s_1,s'_1),\\
s_1(f)=t_1(f) &\mbox{ if } f\in \Delta (s'_1,s_2),\\
s_1(f)=s_2(f) &\mbox{ otherwise. }
\end{cases}
$$
Note that for $g\in A_i-A_j$ one has $s_i(g)=t_0(g)=t_j(g),$ where $\{ i,j\}=\{ 1,2\}.$ Therefore $t_0$ satisfies the requirements in the definition of (SCA), establishing (vi)$\Rightarrow$(iii).

\section{Projection and dimension}\label{projection}

We will now show that for a subset $\covectors$ of $\{ \pm 1\}^E$ the operators of (orthogonal) projection and geometric
realization commute exactly when $\covectors$ is ample. To this end we will make use of the calculus involving sets of the form $({\covectors}^B)_A$, as developed
in \cite{BaChDrKo}. First observe that, essentially by definition, we have
\begin{align*}
|{\covectors}|=\bigcup \limits _{B\subseteq E}\bigcup \limits
 _{s\in {\covectors}^B}\HH(B) \times s. \tag{1}
\end{align*}
Therefore the topological dimension of the cubihedron $|\covectors|$ can be expressed as
$$\dim |{\covectors}|=\max \,\{ \#B: {\covectors}^B\ne
\varnothing \}=\max \,\{ \# B: B\in \underline{{\X}}({\covectors})\},$$
using the terminology of \cite{BaChDrKo}; see also the Introduction. Applying the above equation to ${\covectors}_A$ instead of $\covectors$ yields
\begin{align*}
 |{\covectors}_A|=\bigcup \limits _{B\subseteq E-A}\bigcup \limits _{t\in
 ({\covectors}_A)^B}\HH(B)\times t. \tag{2}
\end{align*}
For the projection from $\HH(E)$ to ${\HH}(E-A)$ applied to $K=|\covectors|$ we compute
\begin{align*}
|{\covectors}|_A&=\left(\bigcup_{B\subseteq E}\bigcup_{s\in {\covectors}^B} \HH(B)\times s\right)_A\\
&=\bigcup_{B\subseteq E}\bigcup_{s\in {\covectors}^B}\HH(B-A)\times s|_{(E-B)-A}\\
&=\bigcup_{B\subseteq E}\bigcup_{t\in ({\covectors}^B)_{A-B}}\HH(B-A)\times t\\
&\subseteq \bigcup_{B\subseteq E}\bigcup_{t\in (\covectors^{B-A})_A}\HH(B-A)\times t
\end{align*}
because for every subset $B$ of $E$ we have the inclusion
$$({\covectors}^B)_{A - B}=(({\covectors}^{B - A})^{B\cap A})_{A-
  B}\subseteq (({\covectors}^{B- A})_{B\cap A})_{A- B}=({\covectors}^{B - A})_A.$$
Note that here the first and last equations use formulas from (13) of \cite{BaChDrKo}, whereas the inclusion is
derived from formula (12) of \cite{BaChDrKo}. If $B$ is chosen to be disjoint from $A$, then the preceding chain
of expressions just collapses to $(\covectors^B)_A$ and $\HH(B)$ equals $\HH(B-A).$ This shows that actually
equality holds above:
\begin{align*}
|{\covectors}|_A=\bigcup_{B\subseteq E-A}\bigcup_{t\in (\covectors^B)_A} \HH(B)\times t.\tag{3}
\end{align*}
Since $(\covectors^B)_A\subseteq (\covectors_A)^B$ by (13) of \cite{BaChDrKo}, we infer from (2) and (3) the inclusion
$$|\covectors|_A\subseteq |\covectors_A|.$$
Using (3), the dimension of the projected cubihedron can be expressed as
\begin{align*}
\dim |\covectors|_A&=\max \{ \#B: ~(\covectors^B)_A\ne\varnothing \mbox{ for some } B\subseteq E-A\}  \tag{4}\\
&=\max\{ \#B: ~B\subseteq E-A \mbox{ with } \covectors^B\ne\varnothing\}.
\end{align*}

Now, the prerequisites for proving the announced result are all in place.

\begin{theorem}
For a subset  ${\covectors}\subseteq \{ \pm 1\}^E,$
the following statements are equivalent:
\begin{itemize}
\item[\rm(i)] ${\covectors}$ is ample;
\item[\rm(ii)] $|{\covectors}|_A=|{\covectors}_A|$ holds for all $A\subseteq E$;
\item[\rm(iii)] $\dim |{\covectors}|_A=\dim |{\covectors}_A|$ holds for
  all $A\subseteq E$.
\end{itemize}
\end{theorem}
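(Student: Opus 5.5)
We will prove the cycle (i)$\Rightarrow$(ii)$\Rightarrow$(iii)$\Rightarrow$(i), using the formulas (2), (3) and (4) established just above together with the calculus of the operators $\covectors^B$ and $\covectors_A$ from \cite{BaChDrKo}.

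\emph{(i)$\Rightarrow$(ii).} If $\covectors$ is ample, then by the commutativity property (5) of the Introduction we have $(\covectors^B)_A=(\covectors_A)^B$ for all disjoint $A,B\subseteq E$. Substituting this into (3) turns its right-hand side into the right-hand side of (2). Hence $|\covectors|_A=|\covectors_A|$ for every $A\subseteq E$.

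\emph{(ii)$\Rightarrow$(iii).} This is trivial.

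\emph{(iii)$\Rightarrow$(i).} This is the main step; we argue by contraposition. Suppose $\covectors$ is not ample. We want some $A$ with $\dim|\covectors|_A<\dim|\covectors_A|$.
\begin{itemize}
\item By (4), $\dim|\covectors|_A=\max\{\#B: B\subseteq E-A,\ B\in\underline{\X}(\covectors)\}$.
\item Applying the dimension formula to $\covectors_A$ gives $\dim|\covectors_A|=\max\{\#B: B\in \underline{\X}(\covectors_A)\}$.
\item Since $\covectors$ is not ample, commutativity (5) fails. As the inclusion $(\covectors^B)_A\subseteq(\covectors_A)^B$ always holds, there exist disjoint $A,B$ with $(\covectors^B)_A\subsetneq(\covectors_A)^B$.
\end{itemize}
The difficulty is to turn this failure into a failure of dimensions. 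We would pick the pair $(A,B)$ carefully. The key observation is that for $C\subseteq E-A$ we have $\covectors^C\neq\varnothing$ if and only if $(\covectors^C)_A\neq\varnothing$. So the dimension comparison only detects whether $(\covectors_A)^C$ is nonempty while $\covectors^C$ is empty.

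We therefore aim for a pair $A,B$ with $B\in\underline{\X}(\covectors_A)$, $B$ of maximum size there, yet $\covectors^{C}=\varnothing$ for all $C\subseteq E-A$ with $\#C\ge\#B$. To find it, we take a witness $t\in(\covectors_A)^B\setminus(\covectors^B)_A$ and restrict to the fiber through $t$. Concretely, we replace $\covectors$ by the subset $\covectors'$ consisting of those $s\in\covectors$ with $s|_{E-A-B}=t|_{E-A-B}$, regarded as a subset of $\{\pm1\}^{A\cup B}$. Since $(\covectors_A)^B$ and $(\covectors^B)_A$ commute with fixing coordinates outside $A\cup B$, the failure persists in $\covectors'$. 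Moreover, the faces of $|\covectors'|$ and of its projections sit inside the corresponding fibers of $|\covectors|$ and $|\covectors_A|$. For $\covectors'$ we then have $(\covectors'_A)^B\neq\varnothing$, so $\dim|\covectors'_A|=\#B$ (the full cube $\{\pm1\}^B$). On the other hand, $\covectors'^B=\varnothing$, so $\dim|\covectors'|_A<\#B$ by (4).

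It remains to lift this dimension gap from $\covectors'$ back to $\covectors$. Since $|\covectors|_A\subseteq|\covectors_A|$ always holds, we would rather use a local version of dimension. Alternatively, we would apply (iii) with $A'=A\cup(E-A-B)$ minus the fiber coordinates, after an induction on $\#E$ in which (iii) passes to fibers. We expect this bookkeeping, i.e.\ ensuring that (iii) is inherited by the fiber $\covectors'$ or finding the right global $A$, to be the main obstacle. The fallback is to induct on $\#E$, using the recursive characterization (4) of the Introduction: show that (iii) for $\covectors$ implies (iii) for each $\covectors^e$, and that connectivity of $\covectors$ follows from (iii) with $\#A=\#E-1$.
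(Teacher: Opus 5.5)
Your steps (i)$\Rightarrow$(ii)$\Rightarrow$(iii) coincide with the paper's. The gap is in (iii)$\Rightarrow$(i): you never produce an $A$ with $\dim|\covectors|_A<\dim|\covectors_A|$.

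The fiber $\covectors'$ through a witness $t\in(\covectors_A)^B\setminus(\covectors^B)_A$ does exhibit a dimension gap, but that gap does not transfer back to $\covectors$. By (4), $\dim|\covectors|_A=\max\{\#C: C\subseteq E-A,\ \covectors^C\ne\varnothing\}$ is a global maximum. So the missing $B$-cube over $t$ can be compensated by a full $B$-cube over some other $t'$, or by a larger $C$ with $\covectors^C\neq\varnothing$, and then no gap appears for this $A$. Nor is it clear that (iii) is inherited by fibers or by $\covectors^e$. Your fallback claim that connectivity follows from the instances $\#A=\#E-1$ of (iii) is false. These instances only say that every coordinate $e$ at which $\covectors$ takes both signs carries an edge of $G(\covectors)$ in direction $e$. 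A disconnected set can satisfy this, e.g.\ two non-adjacent paths of length $2$ in $\{\pm1\}^4$ that together use all four directions.

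The missing idea is to pick the failure with $A\cup B=E$: then there is only one fiber and nothing needs to be lifted. Since $\underline{\X}(\covectors)\subseteq\overline{\X}(\covectors)$ and $\#\underline{\X}(\covectors)\le\#\covectors\le\#\overline{\X}(\covectors)$, equality $\overline{\X}(\covectors)=\underline{\X}(\covectors)$ would force ampleness. Hence a non-ample $\covectors$ has a set $B$ that is shattered but not strongly shattered. Apply (iii) to $A:=E-B$. Then $\covectors_{E-B}=\{\pm1\}^B$, so $\dim|\covectors_{E-B}|=\#B$. Meanwhile (4) gives $\dim|\covectors|_{E-B}=\max\{\#C: C\subseteq B,\ \covectors^C\ne\varnothing\}<\#B$, because $\covectors^B=\varnothing$.

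This is exactly the paper's argument, run directly rather than by contraposition: (iii) for all $A=E-B$ with $B\in\overline{\X}(\covectors)$ forces $\covectors^B\neq\varnothing$. Hence $\overline{\X}(\covectors)=\underline{\X}(\covectors)$ and $\covectors$ is ample. Your ``key observation'' already contains this once you take $B=E-A$.
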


\begin{proof} If $\covectors$ is ample, then $(\covectors^B)_A=(\covectors_A)^B$ according to \cite[Theorem 2]{BaChDrKo} and consequently
$|\covectors|_A$ and $|\covectors_A|$ are equal by (2) and (3). The latter equality trivially implies equality of the corresponding dimensions.

To complete the proof assume that
\begin{align*}
\dim |\covectors|_{E-A}=\dim |\covectors_{E-A}| \mbox{ for all } A\in \overline{\X}(\covectors),
\end{align*}
that is, $A\subseteq E$ with $\covectors_{E-A}=\{ \pm 1\}^A.$ Then
\begin{align*}
\# A &=\dim \HH(A)=\dim |{\covectors}_{E-A}|\\
&=\dim|{\covectors}|_{E-A}\\
&= \max\{ \# B: ~B\subseteq A \mbox{ with } \covectors^B\ne\varnothing\},
\end{align*}
whence $\covectors^A\ne\varnothing,$ that is, $A\in \underline{\X}(\covectors).$ Therefore $\covectors$ is ample by \cite[Theorem 2]{BaChDrKo}.
\end{proof}

\section{Orthant intersection pattern}\label{realization}

Given any
subset $K$ of ${\mathbb R}^E,$ the set
\begin{align*}
L(K)=\{ s\in \{ \pm 1\}^E: ~K\cap \OO(s)\ne\varnothing\}
\end{align*}
encodes the intersection pattern of $K$ with the closed orthants of ${\mathbb R}^E$
determined by the sign maps of $\{ \pm 1\}^E$.
This construction was performed for convex sets by Lawrence \cite{La}. He showed that $L(K)$ is ample whenever $K$ is  a convex set
in the Euclidean space ${\mathbb R}^E$, but not every ample set $\covectors$ can be realized in this way.

The clue for a realization within a wider class
of subsets of ${\mathbb R}^E$ with some weaker convexity properties comes from the rather obvious realization
\begin{align*}
L(|\covectors|)=\covectors \mbox{ if } \covectors\subseteq \{ \pm 1\}^E \mbox{ is ample}. \tag{5}
\end{align*}
Indeed, the inclusion $\covectors\subseteq L(|\covectors|)$ is trivial. Now, if $t\in {\covectors}^*=\{ \pm 1\}^E-\covectors,$ then the corresponding
closed orthant $\OO(t)$ does not intersect any fiber $\FF\subseteq \covectors$ of $\{ \pm 1\}^E$ and hence is disjoint from the face ${\FF}$ of $|\covectors|,$
whence $t\notin |\covectors|.$

We will now show that for a closed subset $K$ of ${\mathbb R}^E$ weak convexity suffices to ensure that $L(K)$ is ample. To
this end we may consider only compact subsets of $H(E)$. Since $L(K)$ is finite, there exists a finite subset $K_0$ of
$K$ with $L(K_0)=L(K).$ We may scale $K$ with some $\lambda>0$ such that $\lambda K_0\subseteq \HH(E).$ Then
\begin{align*}
\widetilde{K}:=\HH(E)\cap \lambda K
\end{align*}
is a compact weakly convex subset of  $\HH(E)$ with $L(\widetilde{K})=L(K).$

\begin{theorem} \label{realizability} A subset ${\covectors}$ of $\{ \pm 1\}^E$ is ample
if and only if there exists a weakly convex subset $K$
of $\mathbb{R}^E$  (or, equivalently, a compact weakly convex subset $K$ of $\HH(E)$)
with $L=L(K)$.
\end{theorem}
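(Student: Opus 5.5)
The ``only if'' direction is immediate from what precedes the statement. If $\covectors$ is ample, Theorem \ref{metric-char} (i)$\Rightarrow$(ii) shows that $K:=|\covectors|$ is weakly convex. It is also compact and contained in $\HH(E)$. By (5) we have $L(|\covectors|)=\covectors$. So the cubihedron itself realizes $\covectors$.

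For the converse, let $K$ be weakly convex with $\covectors=L(K)$. By the rescaling remark preceding the theorem, we may assume $K$ is a compact weakly convex subset of $\HH(E)$; one should check that intersecting $\lambda K$ with $\HH(E)$ keeps weak convexity, which follows because geodesics in $K$ can be clipped coordinatewise, or else one simply works with $\lambda K$ directly. I will verify ampleness through Lawrence's lopsidedness condition (8). Equivalently, I will verify superconnectivity (2) via the total asymmetry / sign-convexity route.

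The cleanest plan is to show that $J(K)$ is sign-convex (Lemmas \ref{weakconv->signconv} and \ref{sign-convex}). Then $\uparrow\hspace*{-0.1cm}J(K)$ is $0$-convex by Lemma \ref{SCA-upper-closure}, since sign-convexity implies $0$-convexity (Lemma \ref{sign-convexity->SCA}). Theorem \ref{barycenters-ample} (iv)$\Rightarrow$(vi) then gives that $\uparrow\hspace*{-0.1cm}J(K)\cap\{\pm 1\}^E$ is ample. It remains to observe the identity $L(K)=\bigcup_{r\in K}\Sign(r)=\uparrow\hspace*{-0.1cm}J(K)\cap\{\pm1\}^E$. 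This holds because $s\in\Sign(r)$ exactly when $\sign(r)\prec s$, which is a direct check from the definitions ($r(e)s(e)\ge 0$ for all $e$ means $\sign(r)(e)\in\{0,s(e)\}$). Hence $\covectors=L(K)=L(J(K))$ is ample.

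The main obstacle is to make sure that Theorem \ref{barycenters-ample} really applies to an arbitrary $\hcovectors=J(K)$ without further hypotheses. Its statement allows any subset of $\{\pm1,0\}^E$, and the chain (iv)$\Rightarrow$(iii)$\Rightarrow$(ii)$\Rightarrow$(v)$\Rightarrow$(vi) is purely combinatorial, so this step only requires care that $0$-convexity of $J(K)$ is obtained correctly. If $J(K)$ is sign-convex (Lemma \ref{sign-convex}), Lemma \ref{sign-convexity->SCA} gives $0$-convexity of $J(K)$ directly. A possible subtlety is the distinctness assumption $r'\ne r''$ in the definition of sign-convexity, but the condition $r'(e)r''(e)<0$ forces distinctness, so there is no issue. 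For the reduction to compact sets, completeness is needed for Menger convexity; if the given $K$ is not closed, I would instead use Lemma \ref{weakconv->signconv} directly on $K$, which only requires geodesic connectivity as built into the definition of weak convexity. This avoids the rescaling step altogether for the ``if'' direction.
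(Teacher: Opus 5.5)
Your proof is correct. The ``only if'' half (Theorem \ref{metric-char} plus (5)) is the paper's argument, but for the ``if'' half you take a genuinely different route.

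The paper argues geometrically by induction on $\#E$:
\begin{itemize}
\item It shows $\covectors^e=L(K')$, where $K'$ is the trace of $K$ on $H_e$. This trace is again weakly convex, because $\ell_1$-geodesics between points of $H_e$ stay in $H_e$. So $\covectors^e$ is ample by induction.
\item It then proves that $\covectors$ is isometric by following a geodesic of $K$ until it first meets a coordinate hyperplane.
\item It concludes with the recursive characterization \cite[Theorem 4]{BaChDrKo}.
\end{itemize}

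You use geodesics only once, inside Lemma \ref{weakconv->signconv}, and then run a combinatorial chain. $J(K)$ is sign-convex, hence $0$-convex. Therefore $L(K)=\uparrow\hspace*{-0.1cm}J(K)\cap\{\pm 1\}^E$ is ample by Theorem \ref{barycenters-ample} (iv)$\Rightarrow$(vi). That implication is purely combinatorial (it goes through superisometry) and does not depend on the present theorem, so there is no circularity.

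What each route buys:
\begin{itemize}
\item Your route is shorter and needs neither induction nor the rescaling to a compact subset of $\HH(E)$.
\item It also proves more: $L(K)$ is ample for every sign-convex $K$, indeed for every $0$-convex $K$, since the transfer to $J(K)$ works exactly as in Lemma \ref{sign-convex}. So the realization theorem holds with these weaker convexities.
\item The paper's route instead shows that $\covectors^e$ is realized by the section $K\cap H_e$. This makes the orthant pattern visibly compatible with the recursive structure of ample sets.
\end{itemize}

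Minor remarks:
\begin{itemize}
\item Completeness is part of the definition of weak convexity, so a non-closed $K$ never occurs.
\item $\HH(E)\cap\lambda K$ is weakly convex simply because a geodesic between two points of the box $\HH(E)$ stays in that box. No clipping is needed.
\item The ampleness you obtain ultimately comes from superisometry (1). It does not come from lopsidedness (8) or superconnectivity (2), as your second paragraph announces.
\end{itemize}
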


\begin{proof} 
It remains to show that $\covectors:=L(K)$ is ample whenever $K$ is a closed weakly convex subset of $\HH(E).$ We proceed by induction on $\#E.$ Since $K$ is weakly convex, $K$ is path-$\ell_1$-isometric, thus there exists $e\in E$ with $\covectors^e\ne\varnothing$.  We claim that $\covectors^e=L(K')$, where
$$K'=\{ r|_{E-\{ e\}}: ~r\in K \mbox{ with } r(e)=0\}.$$
Clearly, $\{ \pm 1\}\times L(K')\subseteq \covectors$. Conversely, if $s'\in \covectors^e,$ then both extensions $s_1,s_2\in \{ \pm 1\}^E$ of $s'$ (with $s_1(e)=-1$ and
$s_2(e)=+1$) belong to $\covectors$. Hence there exist $r_1,r_2\in K$ with $r_1(e)=-1, r_2(e)=+1$ and $r_1(x)\cdot s'(x)\ge 0,$ $r_2(x)\cdot s'(x)\ge 0$ for all $x\in E-\{ e\}.$
Then any geodesic $\gamma$ connecting $r_1$ and $r_2$ in $K$ must contain a point $r$ with $r(e)=0,$ so that necessarily $r(x)\cdot s'(x)\ge 0$ for all $x\in E$ holds as well. This establishes $\covectors^e=L(K').$ Obviously,  the intersection of $K$ with the $e$-hyperplane through $0$ is path-$\ell_1$-isometric, and thus weakly convex.  Therefore $\covectors^e$
is ample by the induction hypothesis.

Finally, to prove that $\covectors$ is isometric, assume that for some subset $A$ with $\#A>1$ we have $s_1,s_2\in \covectors$ with $s_1|_{E-A}=s_2|_{E-A}$ and $\{ s_1(y),s_2(y)\}=\{\pm 1\}$
for all $y\in A.$ For notational convenience,  assume that $s_1(y)=-1$ and $s_2(y)=+1$ for all $y\in A.$ By definition of $\covectors$ there exist $r_1,r_2\in K$ with
$r_i(x)\cdot s_i(x)\ge 0$ for $i=1,2$ and all $x\in E-A$ but $r_1(y)\le 0\le r_2(y)$ for all $y\in A.$ Any geodesic $\gamma$ connecting $r_1$ and $r_2$ in $K$ contains
a point $r$ with $r(z)=0$ for some $z\in A$ such that all interior points on the subgeodesic of $\gamma$ connecting $r_1$ and $r$ have negative coordinates at $A.$
Necessarily $r(x)\cdot s_i(x)\ge 0$ for $i=1,2$ and all $x\in E-A.$ Therefore the neighbor $s$ of $s_1$ on a shortest path between $s_1$ and $s_2$ in the graphic
hypercube ${\HH}^{(1)}(E)$ satisfying $s(z)=+1$ and $s|_{E-\{ z\}}=s_1|_{E-\{ z\}}$ belongs to $\Sign(r)\subseteq \covectors.$ A trivial induction on $\#A$ thus shows
that $\covectors$ is isometric, and thus connected. Then by \cite[Theorem 4]{BaChDrKo} we conclude that $\covectors=L(K)$ is ample.
\end{proof}

\section{Circuits and cocircuits}\label{circuit-cocircuit}

In this section we will characterize the ample sets $\covectors$ via the set $\Cocirc(\covectors)$ of the barycenter maps of the facets (maximal faces) of their cubihedra $|\covectors|$. They correspond to the set  $\Min(\Baryc(\covectors))$ of minimal elements in $\Baryc(\covectors)$ relative to the order $\prec$ on $\{ \pm 1, 0\}$ (see Definition \ref{def:upper-closure}): $\Cocirc(\covectors)=\Min(\Baryc(\covectors))$.
The members of $\Cocirc(\covectors)$ are referred to as the {\it cocircuits} of $\covectors$, since their definition bears resemblance with the one of cocircuits in  oriented matroids \cite{BjLVStWhZi}. 
Then
\begin{align*} \Baryc(\covectors)=\uparrow\hspace*{-0.1cm}\Cocirc(\covectors)&=\{ t\in \{ \pm 1,0\}^E: ~\uparrow\hspace*{-0.1cm}\{ t\}\cap \{ \pm 1\}^E\subseteq \covectors\}\\
&=\{ t\in \{ \pm 1,0\}^E: ~t|_{E-E(t)}\in \covectors^{E(t)}\}\\
&=\{ t\in \{ \pm 1,0\}^E: ~t|_{E-E(t)}\notin (\covectors^*)_{E(t)}\},
\end{align*}
where ${\covectors}^*=\{ \pm 1\}^E-\covectors$ and $E(t)=\{ x\in E: ~-1<t(x)<+1\}=t^{-1}(\{ 0\})$ for $t\in \{ \pm 1,0\}$ were defined previously.

The set $\Circ(\covectors)$ of {\it circuits} of ${\covectors}$ is
defined to be the set $\Cocirc(\covectors^*)$ of cocircuits of ${\covectors}^*$.
Clearly, $t\in \{ \pm 1,0\}^E$ is contained in $\Cocirc(\covectors^*)$
if and only if $t|_{E-E(t)}\notin {\covectors}_{E(t)}$ holds, that is, if and only
if, for every $s\in \covectors$, there exists some $x\in E$ with
$t(x)\cdot s(x)=-1$ or -- equivalently -- if and only if $s\in
\{ \pm 1\}^E$ and $t\prec s$ implies $s\notin \covectors.$ So, $\Circ\,({\covectors})$ consists of the minimal elements in $\{ \pm 1,0\} ^E$ with that
property. It is also easy to see that $\covectors$ coincides with the set of all
sign maps $s\in \{ \pm 1\}^E$ with $t\prec s$ for some $t\in \Cocirc({\covectors})$
as well as with the set of all sign maps $s\in \{ \pm 1\}^E$ with
$t\nsucc s$ for all $t\in \Circ(\covectors)$. We then obtain our final result
essentially as a corollary to Theorem \ref{barycenters-ample}:

\begin{theorem} \label{cocircuit} The following statements are equivalent for a set $\covectors\subseteq \{ \pm 1\}^E$:
\begin{itemize}
\item[\rm(i)] ${\covectors}$ is ample;
\item[\rm(ii)] $\Baryc({\covectors})$ satisfies (SCA);
\item[\rm(iii)] $\Cocirc({\covectors})$ satisfies (SCA);
\item[\rm(iv)] $\Baryc({\covectors}^*)$ satisfies (SCA);
\item[\rm(v)] $\Circ({\covectors})$ satisfies (SCA).
\end{itemize}
Furthermore, if $\hcovectors\subset \{ \pm 1,0\}^E$ satisfies (SCA) and $\Min(\hcovectors)=\hcovectors$, then the set $\covectors=\uparrow\hspace*{-0.1cm}\hcovectors\cap \{ \pm 1\}^E$ is ample and $\Cocirc(\covectors)=\hcovectors$.
\end{theorem}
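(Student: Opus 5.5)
The plan is to derive everything from Theorem \ref{barycenters-ample}, applied with a suitable choice of $\hcovectors$, together with Lemma \ref{SCA-upper-closure} and the identities $\Baryc(\covectors)=\uparrow\hspace*{-0.1cm}\Cocirc(\covectors)$ and $\covectors=\uparrow\hspace*{-0.1cm}\Cocirc(\covectors)\cap\{\pm 1\}^E$ recorded at the start of this section.

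For (i)$\Leftrightarrow$(ii), take $\hcovectors:=\Baryc(\covectors)$. This set is upward closed by Lemma \ref{upper-set-baryc}, and $\uparrow\hspace*{-0.1cm}\hcovectors\cap\{\pm 1\}^E=\covectors$. If $\covectors$ is ample, Theorem \ref{metric-char}(iv) says $\Baryc(\covectors)$ is sign-convex. Lemma \ref{sign-convexity->SCA} (or Proposition \ref{sign-convex-upper-closed}) then makes it 0-convex, i.e.\ it satisfies (SCA). Conversely, if $\Baryc(\covectors)$ satisfies (SCA), then (iv)$\Rightarrow$(vi) of Theorem \ref{barycenters-ample} shows that $\covectors$ is ample.

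For (ii)$\Leftrightarrow$(iii), note that $\Baryc(\covectors)=\uparrow\hspace*{-0.1cm}\Cocirc(\covectors)$. Lemma \ref{SCA-upper-closure} then says $\Cocirc(\covectors)$ is 0-convex exactly when its upward closure $\Baryc(\covectors)$ is. Since ampleness is preserved under complementation (\cite{BaChDrKo}), applying (i)$\Leftrightarrow$(ii)$\Leftrightarrow$(iii) to $\covectors^*$ gives (i)$\Leftrightarrow$(iv)$\Leftrightarrow$(v), because $\Circ(\covectors)=\Cocirc(\covectors^*)$ by definition. The only care needed here is to cite the complementation invariance of ampleness correctly.

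For the final assertion, let $\hcovectors$ satisfy (SCA) with $\Min(\hcovectors)=\hcovectors$, and set $\covectors=\uparrow\hspace*{-0.1cm}\hcovectors\cap\{\pm 1\}^E$. Theorem \ref{barycenters-ample} ((iv)$\Rightarrow$(vi)) shows that $\covectors$ is ample and that $\uparrow\hspace*{-0.1cm}\hcovectors=\Baryc(\covectors)$. It follows that
$$\Cocirc(\covectors)=\Min(\Baryc(\covectors))=\Min(\uparrow\hspace*{-0.1cm}\hcovectors)=\Min(\hcovectors)=\hcovectors.$$
The equality $\Min(\uparrow\hspace*{-0.1cm}\hcovectors)=\Min(\hcovectors)$ was noted in the proof of Lemma \ref{SCA-upper-closure}: every element of the upward closure dominates some element of $\hcovectors$, and $\{\pm 1,0\}^E$ is finite.

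The only step that is not pure bookkeeping is the identification $\Baryc(\covectors)=\uparrow\hspace*{-0.1cm}\Baryc(\covectors)$ together with $\uparrow\hspace*{-0.1cm}\hcovectors=\Baryc(\covectors)$, and both are already available, so no real obstacle is expected.
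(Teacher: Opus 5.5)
Your proposal is correct and follows essentially the same route as the paper. Everything is reduced to Theorem~\ref{barycenters-ample} applied to the upward closed set $\Baryc(\covectors)$ (which satisfies $\Baryc(\covectors)\cap\{\pm 1\}^E=\covectors$), Lemma~\ref{SCA-upper-closure} to pass between $\Cocirc(\covectors)$ and its upward closure, invariance of ampleness under complementation for (iv) and (v), and $\Min(\uparrow\hspace*{-0.1cm}\hcovectors)=\Min(\hcovectors)$ for the final assertion. The only cosmetic difference is in (i)$\Rightarrow$(ii): you cite Theorem~\ref{metric-char}(iv) together with Lemma~\ref{sign-convexity->SCA}, whereas the paper invokes condition (vi) of Theorem~\ref{barycenters-ample}, which unwinds to the same chain of implications.
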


\begin{proof} Given a set $\covectors\subseteq \{ \pm 1\}^E$, the associated subset $\hcovectors:=\mbox{Baryc}(\covectors)$ is upward closed, i.e.,
$\uparrow\hspace*{-0.1cm}\hcovectors=\covectors$, and yields $\covectors$ back as $\hcovectors\cap \{ \pm 1\}^E$. In particular, $[\hcovectors]=|\covectors|$ holds by definition of the two cubihedra.
Therefore, if $\covectors$ is ample, then condition (vi) of Theorem \ref{barycenters-ample} is satisfied. This establishes (i)$\Rightarrow$(ii) (or (iii), respectively).
Trivially, $\uparrow\hspace*{-0.1cm}\mbox{Cocirc}(\covectors)=\covectors,$ whence (ii)$\Longleftrightarrow$(iii) immediately follows from the equivalence of (iii) and (iv) in Theorem \ref{barycenters-ample}. If $\covectors$ satisfies (SCA), then $\covectors$ is ample by the implication from (ii) to (vi) in Theorem \ref{barycenters-ample}. Summarizing, we have shown that the first three statements
(i),(ii),(iii) are equivalent. Since $\mbox{Circ}(\covectors)=\mbox{Cocirc}(\covectors^*)$ and $\covectors$ is ample exactly when its complement $\covectors^*$ is
(cf. \cite[Theorem 2]{BaChDrKo}), statements (iv) and (v) are also equivalent to (i).  If $\hcovectors\subset \{ \pm 1,0\}$ satisfies (SCA) and $\Min(\hcovectors)=\hcovectors$, then $\covectors=\uparrow\hspace*{-0.1cm}\hcovectors\cap \{ \pm 1\}^E$ is ample and $\Baryc(\covectors)=\uparrow\hspace*{-0.1cm}\hcovectors$ by Theorem  \ref{barycenters-ample}. Since $\Min(\uparrow\hspace*{-0.1cm}\hcovectors)=\Min(\hcovectors)$, we also have $\Cocirc(\covectors)=\hcovectors$.
\end{proof}

\noindent
{\bf Remark 2.}  It follows that $r\in \mbox{Circ}({\covectors})$ for some
ample subset  ${\covectors}$ of $\{ \pm 1\}^E$ implies
${\covectors}|_A=\{ \pm 1\}^{A}$ for every proper subset $A$ of $E-E(r)$ and
hence
$${\X}({\covectors}_{E(r)})={\mathcal P}(E-E(r))-\{ E-E(r)\}.$$
In other words, for every circuit $r\in \mbox{Circ}({\covectors})$,
the support $E-E(r)$ is a ``circuit'' of ${\X} ({\covectors}),$
that is, a minimal subset of $E$ not contained in ${\X}
({\covectors}),$ while  $r|_{E-E(r)}$ is the unique element in
$\{ \pm 1\}^{E-E(r)}$ not contained in
${\covectors}_{E(r)}$. In particular, we have
$\# \mbox{Circ}({\covectors})=\#\mbox{Circ}({\X} ({\covectors}))$ with
\begin{align*}
\mbox{Circ}({\X} ({\covectors})):=\{ A\in {\mathcal P}(E)-{\X} ({\covectors}): ~B\in {\X}({\covectors})\mbox{ for all } B\subsetneqq A\}
\end{align*}
for every ample subset ${\covectors}$ of $\{ \pm 1\}^E$.

\medskip\noindent
{\bf Remark 3.} Although proved differently (in a truly combinatorial way), the equivalence between conditions (i), (iii), and (v) of Theorem \ref{cocircuit} was one of our first characterizations of ampleness. Later we found the second proof, which is more geometric and is presented here. However, one can decrypt (SCA) in the formulation of Theorem 5 of \cite{La}, characterizing the route systems of ample/lopsided sets. The route system correspond to the collection of maximal cubes of $\covectors^*$, i.e., to $\Circ({\covectors})$ in our notation. Therefore \cite[Theorem 5]{La} establishes the equivalence (i)$\Longleftrightarrow$(v) of Theorem \ref{cocircuit} (and the proof is different). 

\subsection*{Acknowledgement} V. Chepoi is partially supported by the ANR project MIMETIQUE ``Mineurs métriques'' (ANR-25-CE48-4089-01). He also would like to acknowledge J. Chalopin and K. Knauer for discussions about  \cite[Theorem 5]{La}. J.H. Koolen is partially supported by  the National Natural Science Foundation of China (No. 12471335), and the Anhui Initiative in Quantum Information Technologies (No. AHY150000).

\end{document}